 \newenvironment{myproof}[1][\proofname]{%
  \proof[\bf #1 ]%
}{\endproof}
\newtheorem{theorem}{Theorem}[section]
\newtheorem{corollary}[theorem]{Corollary}
\newtheorem{lemma}[theorem]{Lemma}
\newtheorem{proposition}[theorem]{Proposition}
\newtheorem{conjecture}[theorem]{Conjecture}
\theoremstyle{definition}
\newtheorem{definition}[theorem]{Definition}
\newtheorem{remark}[theorem]{Remark}
\newtheorem{example}[theorem]{Example}
\theoremstyle{remark}
\renewcommand{\theclaim}{\textup{\theclaim}}
\newtheorem*{acknowledgements}{Acknowledgements}
\numberwithin{equation}{section}
\def\openone
\newbox\ipbox
\newcommand{\diracb}[1]{\left\langle #1\mathrel{\mathchoice

{\setbox\ipbox=\hbox{$\displaystyle \left\langle\mathstrut
#1\right.$}

\vrule height\ht\ipbox width0.25pt depth\dp\ipbox}

{\setbox\ipbox=\hbox{$\textstyle \left\langle\mathstrut
#1\right.$}

\vrule height\ht\ipbox width0.25pt depth\dp\ipbox}

{\setbox\ipbox=\hbox{$\scriptstyle \left\langle\mathstrut
#1\right.$}

\vrule height\ht\ipbox width0.25pt depth\dp\ipbox}

{\setbox\ipbox=\hbox{$\scriptscriptstyle \left\langle\mathstrut
#1\right.$}

\vrule height\ht\ipbox width0.25pt depth\dp\ipbox}

}\right. }
\newcommand{\dirack}[1]{\left. \mathrel{\mathchoice

{\setbox\ipbox=\hbox{$\displaystyle \left.\mathstrut
#1\right\rangle$}

\vrule height\ht\ipbox width0.25pt depth\dp\ipbox}

{\setbox\ipbox=\hbox{$\textstyle \left.\mathstrut
#1\right\rangle$}

\vrule height\ht\ipbox width0.25pt depth\dp\ipbox}

{\setbox\ipbox=\hbox{$\scriptstyle \left.\mathstrut
#1\right\rangle$}

\vrule height\ht\ipbox width0.25pt depth\dp\ipbox}

{\setbox\ipbox=\hbox{$\scriptscriptstyle \left.\mathstrut
#1\right\rangle$}

\vrule height\ht\ipbox width0.25pt depth\dp\ipbox}

} #1\right\rangle}
\newcommand{\beq}{\begin{equation}}
\newcommand{\eeq}{\end{equation}}
\newcommand{\cj}[1]{\overline{#1}}
\newcommand{\bz}{\mathbb{Z}}
\newcommand{\br}{\mathbb{R}}
\newcommand{\bc}{\mathbb{C}}
\newcommand{\bt}{\mathbb{T}}
\newcommand{\bn}{\mathbb{N}}
\def\blfootnote{\xdef\@thefnmark{}\@footnotetext}
\newcommand{\uln}[1]{\underline{#1}}
\renewcommand{\mod}{\operatorname{mod}}
\def\-{^{-1}}
\begin{document}

\title[Tiling properties of spectra of measures]{Tiling properties of spectra of measures}
\author{Dorin Ervin Dutkay}

\address{[Dorin Ervin Dutkay] University of Central Florida\\
	Department of Mathematics\\
	4000 Central Florida Blvd.\\
	P.O. Box 161364\\
	Orlando, FL 32816-1364\\
U.S.A.\\} \email{Dorin.Dutkay@ucf.edu}

\author{John Haussermann}

\address{[John Haussermann] University of Central Florida\\
	Department of Mathematics\\
	4000 Central Florida Blvd.\\
	P.O. Box 161364\\
	Orlando, FL 32816-1364\\
U.S.A.\\} \email{jhaussermann@knights.ucf.edu}

\thanks{} 
\subjclass[2000]{42A16,05B45,15B34}
\keywords{Spectrum, tile, Hadamard matrix, fractal, affine iterated function system, Cantor set, Fuglede conjecture, Coven-Meyerowitz conjecture}

\begin{abstract}
 We investigate tiling properties of spectra of measures, i.e., sets $\Lambda$ in $\br$ such that $\{e^{2\pi i \lambda x}: \lambda\in\Lambda\}$ forms an orthogonal basis in $L^2(\mu)$, where $\mu$ is some finite Borel measure on $\br$. Such measures include Lebesgue measure on bounded Borel subsets, finite atomic measures and some fractal Hausdorff measures. We show that various classes of such spectra of measures have translational tiling properties. This lead to some surprizing tiling properties for spectra of fractal measures, the existence of complementing sets and spectra for finite sets with the Coven-Meyerowitz property, the existence of complementing Hadamard pairs in the case of Hadamard pairs of size 2,3,4 or 5. In the context of the Fuglede conjecture, we prove that any spectral set is a tile, if the period of the spectrum is 2,3,4 or 5. 
\end{abstract}
\maketitle \tableofcontents

\section{Introduction}
\begin{definition}\label{def1.0}
For $\lambda\in \br$ we denote by $e_\lambda(x):=e^{2\pi i\lambda\cdot x}$. We say that a finite Borel measure $\mu$ on $\br$ is {\it spectral} if 
there exists a set $\Lambda$ such that the family of exponential functions $E(\Lambda):=\{e_\lambda : \lambda\in\Lambda\}$ is an orthogonal basis for $L^2(\mu)$. We call $\Lambda$ a {\it spectrum} for $\mu$. If $E(\Lambda)$ is an orthogonal set then we say that $\Lambda$ is {\it orthogonal}.

We say that a bounded Borel subset $\Omega$ of $\br$ is {\it spectral} if the restriction of the Lebesgue measure to $\Omega$ is a spectral measure.

 We say that a finite subset $A$ of $\br$ is {\it spectral} if the counting measure on $A$ is a spectral measure. 
\end{definition}

Spectral sets have been introduced in relation to the Fuglede conjecture \cite{Fug74}:
\begin{conjecture}
A bounded Borel subset $\Omega$ of $\br$ is spectral if and only if it tiles $\br$ by translations, i.e., there exists a set $\mathcal T$ in $\br$ such that $\{\Omega+t : t\in\mathcal T\}$ is a partition of $\br$ (up to Lebesgue measure zero). 
\end{conjecture}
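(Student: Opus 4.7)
The plan is to attack the two implications separately, since they have quite different flavor. For the direction that a translational tile is spectral, starting from a translation set $\mathcal T$ with $\br=\bigsqcup_{t\in\mathcal T}(\Omega+t)$ up to measure zero, I would seek a spectrum $\Lambda$ as a kind of Fourier dual to $\mathcal T$. When $\mathcal T$ is a full-rank lattice this is classical: take $\Lambda$ to be the dual lattice, and verify orthogonality of $E(\Lambda)$ in $L^2(\Omega)$ directly from the tiling identity $\sum_{t\in\mathcal T}\chi_\Omega(x-t)=1$ together with Poisson summation, while completeness follows from Parseval on the quotient torus $\br/\mathcal T$. For a general, possibly non-periodic $\mathcal T$, the natural strategy is to first show that $\mathcal T$ must be rational (or at least contained in a lattice up to translation) and then reduce to the periodic case by passing to a finite quotient, where the problem becomes one about complex Hadamard matrices on a finite abelian group.

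For the reverse direction, starting from a spectrum $\Lambda$, the natural first step is the Parseval identity applied to $E(\Lambda)$: it gives
\[
\sum_{\lambda\in\Lambda}|\widehat{\chi_\Omega}(\xi-\lambda)|^2=|\Omega|^2\quad\text{for a.e. }\xi\in\br,
\]
which already exhibits the squared Fourier transform $|\widehat{\chi_\Omega}|^2$ as tiling $\br$ at constant level by $\Lambda$-translates. The task is then to convert this weighted tiling into a set-theoretic tiling of $\br$ by $\Omega$ itself. I would try to exploit the zero set of $\widehat{\chi_\Omega}$: orthogonality forces $\Lambda-\Lambda\subset\{0\}\cup\{\widehat{\chi_\Omega}=0\}$, which strongly constrains $\Lambda$, and I would try to produce a candidate translation set $\mathcal T$ either as a lattice dual to (the rational hull of) $\Lambda$ or via a structure theorem for sets with highly restricted difference set.

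The main obstacle, and the reason the statement is left as a conjecture rather than a theorem, is that the full biconditional is actually false: counterexamples to both implications are known in dimensions $\ge 3$ (Tao, Kolountzakis--Matolcsi, Farkas--Matolcsi--M\'ora), so no purely general argument along the above lines can succeed. Consistent with the abstract, the realistic route is to impose extra structure — most naturally, periodicity of the spectrum with small period $N$ — and reduce to the finite setting, where $\Omega$ or its atomic analogue $A$ is encoded by a mask polynomial $A(X)=\sum_{a\in A}X^a$, orthogonality translates into vanishing of $A$ at prescribed roots of unity, and tiling translates into a factorization identity modulo $X^N-1$. At that point the heavy lifting is to verify the Coven--Meyerowitz conditions (T1)--(T2) on the cyclotomic factors of $A(X)$ and use them to build both a complementing tile and a spectrum simultaneously; this is where I would expect the real work of the proof to lie.
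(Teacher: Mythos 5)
You are being asked to prove a statement that the paper itself labels a \emph{conjecture} and explicitly declares open in both directions in dimension one: this is Fuglede's conjecture on the line, and the paper offers no proof of it. There is therefore no ``paper's own proof'' to compare against, and your closing diagnosis is exactly right --- no argument along general lines can close the biconditional, the known counterexamples in dimension $\ge 3$ show the naive strategies must break somewhere, and the honest outcome of the first two paragraphs of your proposal is a collection of partial reductions, not a proof. To be precise about where each sketch stalls: in the tile-implies-spectral direction, the reduction of a general bounded tile of $\br$ to a finite cyclic-group problem is actually available (tilings of $\br$ by a bounded set are periodic), but the finite problem then requires knowing that every finite tile of $\bz$ satisfies the Coven--Meyerowitz condition (T2), which is itself open; in the spectral-implies-tile direction, the Parseval identity does give the weighted tiling of $|\widehat{\chi_\Omega}|^2$ by $\Lambda$, but converting that into a set-theoretic tiling by $\Omega$ has no known mechanism in general.

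What your proposal does capture accurately is the strategy the paper actually executes for its genuine theorem, namely Theorem \ref{th0.9} (spectral implies tile when the spectrum has period $2$, $3$, $4$, or $5$): invoke the Iosevich--Kolountzakis periodicity theorem, reduce via Lemma \ref{lem0.8} to a statement about finite sets $\Gamma$ with spectra of the form $\frac{1}{p}A$, classify the possible $A$ using the classification of $p\times p$ complex Hadamard matrices for $p\le 5$, and exhibit an explicit tiling complement $\mathcal T$ depending only on $\Gamma$. The one place your emphasis diverges from the paper is that for these small periods the paper does not need the full Coven--Meyerowitz machinery in the proof of Theorem \ref{th0.9}; the Hadamard-matrix classification (Theorem \ref{th1.15}, Corollary \ref{perm}, Theorem \ref{standard}, Theorem \ref{thha4}) pins down the structure of $A$ directly, and the tile $\mathcal T$ is then written down by hand. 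The CM conditions (T1)--(T2) enter elsewhere in the paper, in Theorem \ref{th0.2.1.2}, to build complementary Hadamard pairs. So: correctly identified as unprovable as stated, and a correct reading of where the paper's real work lies, but not a proof of the statement --- because none exists.
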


The conjecture can be formulated in any dimension, but it is known to be false in both directions for dimensions 3 or higher \cite{Tao04,FaMaMo06}. In dimensions 1 and 2, as far as we know at the moment of writing this paper, the conjecture is open in both directions.

In \cite{JoPe98}, Jorgensen and Pedersen have constructed a new example of a spectral measure, a fractal one. Their construction is based on a scale 4 Cantor set, where the first and and third intervals are kept and the other two are discarded. The appropriate measure for this set is the Hausdorff measure $\mu_4$ of dimension $\frac{\ln 2}{\ln 4}=\frac12$. They proved that this measure is spectral with spectrum 
\begin{equation}
\Lambda:=\left\{\sum_{k=0}^n 4^kl_k: l_k\in\{0,1\}, n\in\bn\right\}.
\label{eqspmu4}
\end{equation}

Many other examples of fractal measures have been constructed since, see e.g. \cite{MR1785282,LaWa02,DJ06,DJ07d}, and many other spectra can be constructed for the same measure, see e.g., \cite{DHS09}. Among other things, we will show that the spectrum $\Lambda$ in \eqref{eqspmu4} tiles $\bz$ by translations.

A large class of examples of spectral measures is based on affine iterated function systems. 

\begin{definition}\label{def1.1}
Let $R$ be an integer $R\geq 2$. We call $R$ the {\it scaling factor}. Let $B\subset\bz$, $0\in\bz$, $N:=\#B$. We define the affine iterated function system
$$\tau_b(x)=R^{-1}(x+b),\quad(x\in\br,b\in B).$$
By \cite{Hut81} there exist a unique compact set $X_B$ called {\it the attractor} of the IFS $\{\tau_b\}$, such that 
$$X_B=\cup_{b\in B}\tau_b(X_B).$$
The set $X_B$ can be described using the base $R$ representation of real numbers, with digits in $B$:
$$X_B=\left\{\sum_{k=1}^\infty R^{-k}b_k : b_k\in B\right\}.$$
Also by \cite{Hut81}, there exists a unique Borel probability measure $\mu_B$ on $\br$ that satisfies the invariance equation 
\begin{equation}
\mu_B(E)=\frac{1}{N}\sum_{b\in B}\mu_B(\tau_b^{-1}E)\mbox{ for all Borel subsets $E$ of $\br$}.
\label{eq1.1.1}
\end{equation}
Equivalently, for all continuous compactly supported functions $f$:
\begin{equation}
\int f\,d\mu_B=\frac1N\sum_{b\in B}\int f\circ\tau_b\,d\mu_B.
\label{eq1.1.2}
\end{equation}
The measure $\mu_B$ is called {\it the invariant measure} of the IFS $\{\tau_b\}$. 
In addition the measure $\mu_B$ is supported on the attractor $X_B$.
\end{definition} 

\begin{definition}\label{def1.1i}
Let $L\subset\bz$, $0\in L$. We say that $(B,L)$ is a {\it Hadamard pair with scaling factor $R$} if $\#L=\#B=N$ and the matrix 
$$\frac{1}{\sqrt N}\left(e^{2\pi i R^{-1}b\cdot l}\right)_{b\in B,l\in L}$$
is unitary. We call this matrix {\it the matrix associated with $(B,L)$}.

We define the function 
\begin{equation}
m_B(x)=\frac{1}{N}\sum_{b\in B}e^{2\pi ib\cdot x},\quad (x\in\br)
\label{eq1.1i.1}
\end{equation}

Given a Hadamard pair $(B,L)$ we say that a finite set of points $\{x_0,\dots,x_{r-1}\}$ in $\br$ is a {\it cycle} for $L$ if there exist $l_0,\dots,l_{r-1}$ in $L$ such that 
$$\frac{x_0+l_0}{R}=x_1,\dots,\frac{x_{r-2}+l_{r-2}}{R}=x_{r-1},\frac{x_{r-1}+l_{r-1}}{R}=x_0.$$
We call $l_0,\dots,l_{r-1}$ {\it the digits of this cycle.}
We say that this cycle is {\it extreme} for $(B,L)$ if  
$$|m_B(x_k)|=1\mbox{ for all }k\in\{0,\dots,r-1\}.$$
The points $\{x_i\}$ are called {\it (extreme) cycle points}.
\end{definition}

When $(B,L)$ is a Hadamard pair with scaling factor $R$, then the measure $\mu_B$ is always spectral and a spectrum can be constructed using digits in $L$ and extreme cycles. 

\begin{theorem}\label{th1.2}\cite{DJ06}
If $(B,L)$ is a Hadamard pair then $\mu_B$ is a spectral measure with spectrum $\Lambda(L)$ where $\Lambda$ is the smallest set which contains $-C$ for all cycles $C$ for $L$ which are extreme for $(B,L)$, and with the property that $R\Lambda(L)+L\subset \Lambda(L)$. 
\end{theorem}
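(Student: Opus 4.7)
The plan is to prove the two things separately: first orthogonality of $E(\Lambda(L))$ in $L^2(\mu_B)$, then completeness. The main tool is the Fourier transform
$$\hat\mu_B(t)=\int e^{2\pi i t x}\,d\mu_B(x)=\prod_{k=1}^{\infty}m_B(R^{-k}t),$$
which follows by iterating the self-similarity identity $\hat\mu_B(t)=m_B(R^{-1}t)\hat\mu_B(R^{-1}t)$ obtained from \eqref{eq1.1.2}. The unitarity of the Hadamard matrix associated with $(B,L)$ is equivalent to the Parseval-type identity
$$\sum_{l\in L}|m_B(R^{-1}(x+l))|^2=1\quad\text{for all }x\in\br,$$
and this is the key analytic input. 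The structure of $\Lambda(L)$ needed is that every $\lambda\in\Lambda(L)$ either lies in $-C$ for some extreme cycle $C$, or admits a (non-unique) decomposition $\lambda=R\alpha+l$ with $\alpha\in\Lambda(L)$ and $l\in L$.

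For orthogonality I would show that $\ip{e_\lambda}{e_{\lambda'}}_{L^2(\mu_B)}=\hat\mu_B(\lambda-\lambda')=0$ whenever $\lambda\neq\lambda'$ in $\Lambda(L)$, by induction on the minimal number of steps needed to produce $\lambda$ and $\lambda'$ from the extreme cycle seeds. Writing $\lambda=R\alpha+l$ and $\lambda'=R\alpha'+l'$, one splits into two cases. If $l\neq l'$, then $R^{-1}(\lambda-\lambda')$ differs from $R^{-1}(l-l')$ by an element whose $m_B$-periodicity (since $B\subset\bz$) can be controlled, and the Hadamard condition forces $m_B(R^{-1}(\lambda-\lambda'))=0$, which kills the first factor of the infinite product. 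If $l=l'$ then $\lambda-\lambda'=R(\alpha-\alpha')$, so $\hat\mu_B(\lambda-\lambda')=m_B(\alpha-\alpha')\hat\mu_B(\alpha-\alpha')$ and the inductive hypothesis closes the argument. The base case (when both $\lambda,\lambda'$ come from extreme cycles) is handled by a direct computation exploiting $|m_B|=1$ on extreme cycle points and the periodicity of $m_B$.

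For completeness I would follow the standard Jorgensen--Pedersen strategy via the function
$$Q(x):=\sum_{\lambda\in\Lambda(L)}|\hat\mu_B(x+\lambda)|^2.$$
Orthogonality combined with Bessel's inequality gives $0\le Q(x)\le 1$, and $E(\Lambda(L))$ is a spectrum for $\mu_B$ exactly when $Q\equiv 1$. Splitting the sum defining $Q(x)$ according to the recursion $\lambda=R\alpha+l$ and using both the factorization of $\hat\mu_B$ and the Hadamard identity, $Q$ becomes a fixed point of the transfer operator
$$(\mathcal{R}f)(x)=\sum_{l\in L}|m_B(R^{-1}(x+l))|^2 f(R^{-1}(x+l)),$$
up to boundary terms attached to the extreme cycles included as seeds. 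The main obstacle is ruling out $Q\not\equiv 1$: since $1-Q\ge 0$ also satisfies $\mathcal{R}(1-Q)=1-Q$ (modulo those cycle corrections), an iteration/maximum principle argument forces the support of any non-trivial $1-Q$ to concentrate on $\mathcal{R}$-invariant orbits where all factors $|m_B(R^{-1}(x+l))|^2$ saturate to $1$, i.e., precisely on extreme cycles. Because $-C$ is included in $\Lambda(L)$ for every extreme cycle $C$, the mass that could have been lost there is already present in $Q$, so $1-Q\equiv 0$ and $\Lambda(L)$ is a spectrum. The extreme-cycle exhaustion step is the technical heart of the proof.
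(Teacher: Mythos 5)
The paper does not prove this theorem at all: it is imported verbatim from the cited reference \cite{DJ06}, so there is no in-paper argument to compare against. Your outline is exactly the strategy of that reference (and of Jorgensen--Pedersen before it): the infinite-product formula for $\widehat\mu_B$, the Hadamard condition recast as $\sum_{l\in L}|m_B(R^{-1}(x+l))|^2=1$, orthogonality via vanishing of a factor $m_B(R^{-1}(\lambda-\lambda'))$, and completeness via the function $Q$ and the transfer operator, with extreme cycles as the only obstruction. That said, what you have written is a roadmap rather than a proof: the two genuinely hard steps --- controlling $m_B$ along differences of elements of $\Lambda(L)$ in the orthogonality induction (which requires the observation that $e^{2\pi i b c}=1$ for every $b\in B$ and every extreme cycle point $c$, so that $m_B$ is periodic under translation by such points), and the ``extreme-cycle exhaustion'' showing that any nonnegative sub-invariant function $1-Q$ must vanish --- are named but not carried out, and you acknowledge as much. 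If the intent is to supply a self-contained proof rather than to reconstruct the citation, those two steps still need to be written down.
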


This spectrum can be described in terms of base $R$ representations of integers using only digits in $L$. 

\begin{definition}\label{def1.2}
Let $L$ be a set of integers. We say that an integer $x$ can be represented in base $R$ using digits in $L$ if there exist integers $x_0,x_1,\dots$, with $x_0=x$ and digits $l_0,l_1,\dots$ in $L$ such that 
$$x_k=Rx_{k+1}+l_k\mbox{ for all $k\geq0$}.$$
We call $l_0l_1\dots$ a {\it representation} of $x$ in base $R$. 
\end{definition}

\begin{proposition}\label{pr0.1.6}
Let $(B,L)$ be a Hadamard pair. Assume in addition that all extreme cycles for $(B,L)$ are contained in $\bz$. Then the spectrum $\Lambda(L)$ defined in Theorem \ref{th1.2} is the set of integers which can be represented in base $R$ using digits in $L$.
\end{proposition}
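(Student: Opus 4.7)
The plan is to prove both inclusions from the inductive description
\[
\Lambda(L)=\bigcup_{n\ge 0}\Lambda_n,\qquad\Lambda_0=\bigcup_{C}(-C),\qquad\Lambda_{n+1}=R\Lambda_n+L,
\]
where $C$ ranges over extreme cycles for $(B,L)$. The hypothesis that all extreme cycles are contained in $\bz$ forces $\Lambda_0\subset\bz$, hence $\Lambda(L)\subset\bz$, so both sides of the claimed equality are subsets of $\bz$.

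For the inclusion ``$\Lambda(L)$ is contained in the representable integers,'' a typical element $\lambda\in\Lambda_n$ has the form $\lambda=R^n(-y_0)+R^{n-1}l_0+\cdots+l_{n-1}$ with $l_k\in L$ and $y_0$ a point of an extreme cycle $\{y_0,\ldots,y_{r-1}\}$ with cycle digits $l'_0,\ldots,l'_{r-1}$. Peeling off one digit at a time produces $\lambda=x_0,x_1,\ldots,x_n=-y_0$ with $x_k=Rx_{k+1}+l_{n-1-k}$. Then, rewriting the cycle relation $Ry_{i+1}=y_i+l'_i$ as $-y_i=R(-y_{i+1})+l'_i$, I continue periodically: the states $-y_0,-y_1,\ldots,-y_{r-1},-y_0,\ldots$ with digits $l'_0,l'_1,\ldots\in L$ furnish the tail of a representation of $\lambda$.

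For the reverse inclusion, suppose $x\in\bz$ admits a representation $(x_k,l_k)$ with $x_0=x$ and $x_k=Rx_{k+1}+l_k$. With $M=\max_{l\in L}|l|$ one has $|x_{k+1}|\le(|x_k|+M)/R<|x_k|$ whenever $|x_k|>M/(R-1)$, so the integer sequence $(x_k)$ is eventually trapped in $\{n\in\bz:|n|\le M/(R-1)\}$. By pigeonhole there exist $k_1<k_2$ with $x_{k_1}=x_{k_2}$, and then $(x_{k_1},\ldots,x_{k_2-1})$ with the corresponding digits $(l_{k_1},\ldots,l_{k_2-1})$ satisfies a cyclic relation. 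Negating, the points $y_i:=-x_{k_1+i}$ satisfy $y_{i+1}=(y_i+l_{k_1+i})/R$, giving a cycle for $L$ in the sense of Definition \ref{def1.1i}.

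The main step is to show this cycle is extreme, so that $-y_i=x_{k_1+i}\in\Lambda_0\subset\Lambda(L)$. This does not require the proposition's explicit hypothesis but only the standing condition $B\subset\bz$ from Definition \ref{def1.1}: for any $y\in\bz$, $m_B(y)=\frac{1}{N}\sum_{b\in B}e^{2\pi i b y}=1$, so $|m_B(y_i)|=1$ automatically and the cycle is extreme. Once $x_{k_1}\in\Lambda(L)$, iterating the closure $x_{k-1}=Rx_k+l_{k-1}\in R\Lambda(L)+L\subset\Lambda(L)$ down to $k=1$ yields $x=x_0\in\Lambda(L)$. The expected subtlety is keeping track of the index-reversal between the \emph{forward} assembly of $\Lambda(L)$ and the \emph{backward-peeling} format of a representation; the extremality step, which at first glance looks like it might need a separate argument, dissolves because $B\subset\bz$ makes every integer an extreme point for $m_B$.
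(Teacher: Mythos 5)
Your proof is correct and follows essentially the same route as the paper's: the same correspondence between cycles for $L$ and periodic tails of base-$R$ representations (the paper's Proposition \ref{pr1.5}), the same boundedness/pigeonhole argument for eventual periodicity (the paper's Proposition \ref{pr1.3}), the same observation that an integer cycle point is automatically extreme because $B\subset\bz$ forces $m_B=1$ on $\bz$, and the same use of the minimality/invariance of $\Lambda(L)$ for both inclusions. The only cosmetic difference is that you unfold the minimality into an explicit induction on the layers $\Lambda_n$ and inline the two auxiliary propositions rather than citing them.
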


Next we turn our attention to finite spectral subsets of $\bz$. The variant of the Fuglede conjecture for such sets is that a finite subset $A$ of $\bz$ is spectral if and only if it tiles $\bz$ by translations. In \cite{CoMe99}, Coven and Meyerowitz proposed a characterization of sets that tile integers by translations, in terms of cyclotomic polynomials. 
\begin{definition}\label{def2.1.1}
Let $A$ be a finite multiset of nonnegative integers, by multiset we mean that some elements $a\in A$ might be counted with multiplicity $m_a$. We define the polynomial corresponding to $A$ by 
$$A(x)=\sum_{a\in A}m_ax^a.$$

For $s\in\bn$, we denote by $\Phi_s(x)$ the $s$-th cyclotomic polynomial. We denote by $S_A$ the set of all prime powers such that the $s$-th cyclotomic polynomial divides $A(x)$.

We say that the set $A$ (without any multiplicity) {\it satisfies the Coven-Meyerowitz property (or shortly, $A$ has the CM-property) }if the following two conditions are satsisfied:
\begin{enumerate}
	\item[(T1)] $A(1)=\prod_{s\in S_A}\Phi_s(1).$
	\item[(T2)] If $s_1,\dots,s_m\in S_A$ are powers of distinct primes then $\Phi_{s_1\dots s_m}(x)$ divides $A(x)$.
\end{enumerate} 
\end{definition}

Coven and Meyerowitz proved in \cite{CoMe99} that a set with the CM-property tiles $\bz$ by translations and they conjectured that the reverse is also true, and proved the conjecture in some special cases (when the size of the set has at most two prime factors). \L aba proved in \cite{Lab02} that the CM-property also implies that the set is spectral. Combining these results we show that the tiling sets and spectra fit together nicely in a {\it complementary pair}. We are also interested in the extreme cycles due to their importance for the spectra of fractal measures.

\begin{definition}\label{def2.0}
Let $A,A'$ be two subsets of $\br$. We say that $A$ and $A'$ {\it have disjoint differences} if $(A-A)\cap(A'-A')=\{0\}$. In this case we denote by 
$A\oplus A'=\{a+a' : a\in A, a'\in A'\}$; we use the sign $\oplus$ to indicate that the sets have disjoint differences; equivalently, for any $x\in A+A'$ there exist unique $a\in A$ and $a'\in A'$ such that $x=a+a'$; equivalently, the sets $A+a'$, $a'\in A'$ are disjoint. 
\end{definition}

\begin{definition}\label{def2.1}
Let $R\in\bz$, $R\geq 2$. Let $(B,L)$ and $(B',L')$ be two Hadamard pairs with scaling factor $R$, $\#B=N$, $\#B'=N'$, not necessarily equal. We say that the two Hadamard pairs are {\it complementary} if the following conditions are satisfied: 
\begin{enumerate}
	\item $B\oplus B'$ and $L\oplus L'$ are complete sets of representatives $\mod R$.
	\item The extreme cycles for $(B,L)$ and the extreme cycles for $(B',L')$ are contained in $\bz$.
	\item The greatest common divisor of the points in $B\oplus B'$ is 1.
\end{enumerate}
\end{definition}

\begin{theorem}\label{th0.2.1.2}
Let $B$ a finite set of nonnegative integers with $\gcd(B)=1$ and which satsifies the Coven-Meyerowitz property. Let $R$ be the lowest common multiple of the elements in $S_B$. Then there exist finite sets $B',L,L'$ of nonnegatve integers such that 
\begin{enumerate}
	\item $(B,L)$ and $(B',L')$ are complementary Hadamard pairs (relative to the number $R$).
	\item $B'$ satisfies the Coven-Meyerowitz property.
\end{enumerate}
\end{theorem}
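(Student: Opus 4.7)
The plan is to construct $B'$, $L$, and $L'$ explicitly from the cyclotomic structure dictated by $S_B$, so that $(B,B')$ and $(L,L')$ each give a tiling of $\bz_R$ by direct-sum complementation.

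First, I would build the tiling complement $B'$. Define $S_{B'}$ to be the set of prime-power divisors of $R$ not in $S_B$. For each prime $p \mid R$ with some $p^j \in S_{B'}$, form a $p$-adic block
\[
B'_p := \bigoplus_{j \,:\, p^j \in S_{B'}} \tfrac{R}{p^{c_p}}\,p^{j-1}\{0,1,\ldots,p-1\},
\]
where $p^{c_p}$ is the largest power of $p$ dividing $R$, and set $B' := \bigoplus_p B'_p$; the direct sum is well defined because blocks for distinct primes live on coprime $p$-adic scales. A cyclotomic-factor count should give $B(x)B'(x) \equiv 1+x+\cdots+x^{R-1} \pmod{x^R-1}$, whence $B \oplus B' \equiv \{0,1,\ldots,R-1\} \pmod R$. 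Property (T1) for $B'$ is immediate from $B'(1) = \prod_{p^j \in S_{B'}} p$, and (T2) should follow from the $p$-adic separation of the blocks together with a careful reading of the cyclotomic divisors of $B(x)$. Applying \L aba's theorem to $B$ and to $B'$ then yields spectra $L, L' \subseteq \{0,\ldots,R-1\}$ such that $(B,L)$ and $(B',L')$ are Hadamard pairs with scaling $R$; by arranging the \L aba blocks so that $S_L = S_B$ and $S_{L'} = S_{B'}$, the same cyclotomic accounting gives $L \oplus L' \equiv \{0,\ldots,R-1\} \pmod R$, which is condition~(1) of Definition~\ref{def2.1}.

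For the extreme-cycle condition, $|m_B(x)| = 1$ forces all $e^{2\pi i b x}$ for $b \in B$ to coincide, hence $(b-b')x \in \bz$ for every $b, b' \in B$; combined with $\gcd(B)=1$ this gives $x \in \bz$. The same argument for $(B',L')$ uses $\gcd(B')=1$, which I would verify directly from the construction (the block built from the smallest prime in $S_{B'}$ produces a digit equal to~$1$). Finally $\gcd(B \oplus B') = 1$ is automatic, since $B \oplus B'$ represents every residue in $\{0,1,\ldots,R-1\} \pmod R$ and in particular a representative of~$1$.

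The main obstacle is verifying property (T2) for $B'$ and the accompanying tiling identity for $L, L'$: given distinct-prime prime powers $s_1,\ldots,s_m \in S_{B'}$, one needs $\Phi_{s_1 \cdots s_m}(x) \mid B'(x)$, but this is not automatic from the cyclotomic-block construction since the naïve product $\prod_{s \in S_{B'}} \Phi_s(x)$ does not contain the mixed cyclotomic factor. Making the construction work requires either enriching each block so that the mixed roots of unity are annihilated, or replacing the naïve construction with one tailored to mirror the specific cyclotomic structure of $B$, and then showing that the corresponding \L aba spectra $L$ and $L'$ are complementary in $\bz_R$. This combinatorial engineering — coordinating the cyclotomic contents of $B, B', L, L'$ so that each pair tiles $\bz_R$ — is where the heart of the proof lies.
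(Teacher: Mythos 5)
Your construction of $B'$ is in fact the Coven--Meyerowitz complement used in the paper: the block $\frac{R}{p^{c_p}}p^{j-1}\{0,\dots,p-1\}$ is precisely the exponent set of $\Phi_{p^j}(x^{t(p^j)})$ with $t(p^j)=R/p^{c_p}$, so your $B'$ satisfies $B'(x)=\prod_{s\in S_{B'}}\Phi_s(x^{t(s)})$. But you then stop short exactly where the argument has content. The obstacle you flag for (T2) rests on a mischaracterization of your own construction: the relevant product is not the ``naive'' $\prod_{s\in S_{B'}}\Phi_s(x)$ but $\prod_{s\in S_{B'}}\Phi_s(x^{t(s)})$, and the substitution $x\mapsto x^{t(s)}$ is what supplies the mixed factors --- by \cite[Lemma 1.1(6)]{CoMe99}, if $s=s_1\cdots s_m$ is a product of prime powers of distinct primes dividing $R$ and $s_i\notin S_B$, then $\Phi_s(x)$ already divides the single block $\Phi_{s_i}(x^{t(s_i)})$. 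So no ``enriching'' of the blocks is needed; the step you declare to be the heart of the proof and leave open is settled by that lemma. Likewise, the identities $B\oplus B'=\bz_R$ and $L\oplus L'=\bz_R$ need actual arguments (the paper proves the latter by an explicit induction on $p$-adic valuations of the sums $R\sum_s k_s/s$ defining \L aba's spectra), not ``the same cyclotomic accounting.''

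The more damaging error is the claim that $\gcd(B')=1$. It is false in general: the nonzero digits of your block $B'_p$ are multiples of $\frac{R}{p^{c_p}}p^{j-1}$, which equals $1$ only in degenerate cases. Concretely, for $B=\{0,1,4,5\}$ one has $B(x)=\Phi_2(x)\Phi_8(x)$, so $S_B=\{2,8\}$, $R=8$, $S_{B'}=\{4\}$, $t(4)=1$, and $B'=\{0,2\}$ with $\gcd(B')=2$. Consequently your argument that the extreme cycles of $(B',L')$ lie in $\bz$ collapses, since it relied on $\gcd(B')=1$. What is true, and what the paper proves, is that $d':=\gcd(B')$ divides $R$ (each generator $p^{\alpha-1}t(s)$ divides $st(s)$, which divides $R$); then extreme cycle points lie in $\frac{1}{d'}\bz$ by Proposition \ref{pr1.2}, and the cycle relation $x_0+l_0'=Rx_1$ with $Rx_1\in\frac{R}{d'}\bz\subset\bz$ forces $x_0\in\bz$. (Your argument for $\gcd(B\oplus B')=1$ is also off --- containing a representative of $1\bmod R$ does not bound the gcd, cf.\ the set $\{0,3\}$ with $R=2$ --- but the conclusion is immediate from $B\subseteq B\oplus B'$ and $\gcd(B)=1$.)
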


Once we have two complementary Hadamard pairs $(B,L)$, $(B',L')$ with scaling factor $R$, we can construct the two fractal measures $\mu_B$ and $\mu_{B'}$ with spectra $\Lambda(L)$ and $\Lambda(L')$ respectively. The next theorem shows that the convolution of the two measures $\mu_B$ and $\mu_{B'}$ is the Lebesgue measure on a tile of $\br$, it is also the invariant measure $\mu_{B\oplus B'}$ for the affine IFS associated to scaling by $R$ and digits $B\oplus B'$. The two spectra always have disjoint differences and moreover, under some restrictions on the encodings of the extreme cycles for $(B\oplus B',L\oplus L')$, $(B,L)$ and $(B',L')$, the two sets complement each other, in the sense that $\Lambda(L)$ tiles $\bz$ with $\Lambda(L')$.

\begin{definition}\label{def2.2}
Let $(B,L)$ and $(B',L')$ be complementary Hadamard pairs with scaling factor $R$. We define the maps $p:L\oplus L'\rightarrow L$ and $p':L\oplus L'\rightarrow L'$ by 
$$p(l+l')=l,\quad p'(l+l')=l'\mbox{ for all }l\in L ,l'\in L'.$$
For a sequence $a_0a_1\dots$ of digits in $L\oplus L'$ we define 
$$p(a_0a_1\dots)=p(a_0)p(a_1)\dots,\quad p'(a_0a_1\dots)=p'(a_0)p'(a_1)\dots.$$
\end{definition}

\begin{theorem}\label{th2.3}
Let $(B,L)$ and $(B',L')$ be complementary Hadamard pairs with scaling factor $R$. Let $\Lambda(L)$ be the set of integers that can be represented in base $R$ using digits from $L$, and similarly for $\Lambda(L')$.
\begin{enumerate}
	\item The measure $\mu_{B\oplus B'}$ is the Lebesgue measure on the attractor $X_{B\oplus B'}$ and has spectrum $\bz$. Moreover $X_{B\oplus B'}$ is translation congruent to $[0,1]$, i.e., there exists a measurable partition $\{A_n\}_{n\in\bz}$ of $[0,1]$ such that $\{A_n+n\}_{n\in\bz}$ is a partition of $X_{B\oplus B'}$. 
	\item The measure $\mu_{B\oplus B'}$ is the convolution of the measures $\mu_B$ and $\mu_{B'}$. 
	\item The set $\Lambda(L)$ is a spectrum for $\mu_B$ and the set $\Lambda(L')$ is a spectrum for $\mu_{B'}$.
	\item The sets $\Lambda(L)$ and $\Lambda(L')$ have disjoint differences. 	
	\item The set $\Lambda(L)\oplus\Lambda(L')=\bz$ if and only if for any digits $a_0\dots a_{r-1}$ of an extreme cycle for $(B\oplus B',L\oplus L')$, the sequence $p(a_0\dots a_{r-1})$ consists of the digits of an extreme cycle for $(B,L)$ and the sequence $p'(a_0\dots a_{r-1})$ consists of the digits of an extreme cycle for $(B',L')$. 
	The equality $\Lambda(L)\oplus\Lambda(L')=\bz$ means that $\Lambda(L)$ tiles $\bz$ by $\Lambda(L')$.
\end{enumerate}

\end{theorem}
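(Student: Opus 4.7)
The proof naturally splits along the five items, the final two carrying the real content. For (i) I would invoke standard self-affine tile theory: since $B\oplus B'\subset\bz$ is a complete set of representatives modulo $R$ with $\gcd(B\oplus B')=1$, the attractor $X_{B\oplus B'}$ is a self-affine tile of Lebesgue measure one that tiles $\br$ by integer translations, whence $\mu_{B\oplus B'}$ equals Lebesgue measure on $X_{B\oplus B'}$ and $\bz$ is a spectrum (from either Poisson summation or a direct check: $m_{B\oplus B'}(n)=\#(B\oplus B')/R=1$ for every $n\in\bz$). Part (ii) is the pushforward identity $\mu_{B\oplus B'}=\mu_B*\mu_{B'}$, which follows by pushing the product of Bernoulli measures on $B^{\bn}\times(B')^{\bn}$ forward by digit-sum and using that $B\oplus B'$ is a direct sum. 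Part (iii) is then immediate from Theorem~\ref{th1.2} and Proposition~\ref{pr0.1.6}, given the hypothesis that the extreme cycles for $(B,L)$ and $(B',L')$ lie in $\bz$.

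For (iv) the key is that $L\oplus L'$ is a complete residue system modulo $R$, so every integer admits a \emph{unique} base-$R$ representation with digits in $L\oplus L'$: the zeroth digit is forced by the residue modulo $R$, and the iteration $x\mapsto(x-a_0)/R$ is deterministic and eventually periodic. Suppose $\lambda_1-\lambda_2=\lambda_1'-\lambda_2'$ with $\lambda_i\in\Lambda(L)$ and $\lambda_i'\in\Lambda(L')$. Adding the $L$-digit sequence of $\lambda_i$ termwise to the $L'$-digit sequence of $\lambda_j'$ turns the identity $\lambda_1+\lambda_2'=\lambda_2+\lambda_1'$ into two $L\oplus L'$-representations of the same integer. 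Uniqueness forces the $k$-th digits to coincide, and since $L\oplus L'$ is a direct sum the $L$- and $L'$-parts coincide separately, giving $\lambda_1=\lambda_2$ and $\lambda_1'=\lambda_2'$.

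For (v) the calculation from (i) gives $|m_{B\oplus B'}(n)|=1$ for all $n\in\bz$, so every integer cycle for the iteration by $L\oplus L'$ is automatically extreme, and $\Lambda(L\oplus L')=\bz$. For any $x\in\bz$ take its unique $L\oplus L'$-expansion; it is eventually periodic with cycle digits $(a_0,\dots,a_{r-1})$, and splitting $a_k=p(a_k)+p'(a_k)$ yields candidate expansions of a $\lambda\in\Lambda(L)$ and $\lambda'\in\Lambda(L')$ with $\lambda+\lambda'=x$. These candidates actually lie in $\Lambda(L)$ and $\Lambda(L')$ iff the periodic tails $p(a_0\dots a_{r-1})$ and $p'(a_0\dots a_{r-1})$ are digit sequences of genuine cycles for $(B,L)$ and $(B',L')$ (extremeness is then automatic, since the cycle points are integers and the same $|m|=1$ computation applies to $B$ and $B'$). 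Combined with the disjoint-differences statement in (iv), this yields $\Lambda(L)\oplus\Lambda(L')=\bz$ exactly when the digit-projection condition holds; the conclusion that $\Lambda(L)$ tiles $\bz$ by $\Lambda(L')$ is then a restatement of the direct-sum equality.

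The main obstacle is inside (v): passing from the $L\oplus L'$-cycle digits $(a_0,\dots,a_{r-1})$ to bona fide cycles of $(B,L)$ and $(B',L')$ requires that the rationals $\sum_{j=0}^{r-1}R^j\,p(a_{k+j\bmod r})/(R^r-1)$ be integers, which is neither automatic from the existence of the $L\oplus L'$-cycle nor deducible from (iv); this integrality is precisely what the cycle-projection hypothesis of the equivalence records, and threading it through both directions is the crux of the argument.
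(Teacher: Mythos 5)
Your proposal is correct and follows essentially the same route as the paper: (i)--(iii) by citing the known tile/spectrum results and the factorization $\mu_{B\oplus B'}=\mu_B*\mu_{B'}$, (iv) by uniqueness of the $L\oplus L'$ digit expansion together with the direct-sum splitting of each digit, and (v) by projecting the periodic tail via $p,p'$ and observing that the only issue is whether the projected periodic tails are genuine (hence automatically extreme) integer cycles --- exactly the crux the paper isolates. The only cosmetic differences are that you obtain (ii) by pushing forward Bernoulli product measures rather than multiplying the infinite-product Fourier transforms, and in the converse of (v) you decompose every integer directly from its expansion where the paper invokes the minimality/invariance characterization of $\Lambda(L\oplus L')$; both are equivalent.
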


Next, we focus on sets $B$ of small size: 2,3,4,5 and investigate when such a set is spectral and when a Hadamard pair with scaling factor $R$ can be complemented. We base our results on the classification of Hadamard matrices of size 2,3,4,5. For size $\#B=2,3,4$ this is fairly simple, see \cite{TaZy06}. For size 5, the problem becomes more complicated but it was solved by Haagerup \cite{Haa97}.

\begin{definition}
A $N\times N$ matrix $H$ is called a {\it Hadamard matrix} if it is unitary and all its entries have the same absolute value $\frac{1}{\sqrt N}$. Two Hadamard matrices $H$, $H$' are said to be {\it equivalent} if one can be obtained from the other after permutations of row and columns and multiplication of rows and columns by complex numbers of absolute value 1; formally: there exist permutation $\pi$ and $\rho$ of the set $\{1,\dots,N\}$ and complex numbers $c_1,\dots,c_N,d_1,\dots,d_N$ on the unit circle $\bt=\{z: |z|=1\}$ such that 
$$H_{ij}'=c_id_jH_{\pi(i)\rho(j)},\quad(i,j\in\{1,\dots,N\}).$$
The matrix of the Fourier transform on $\bz_N$, $\frac{1}{\sqrt{N}}(e^{2\pi i\frac{jk}{N}})_{j,k=0}^{N-1}$ is called the {\it standard Hadamard matrix}.

\end{definition}

\begin{theorem}\label{th1.15}(See \cite{TaZy06,Haa97})
Let $N=2,3$ or $5$. Any Hadamard matrix of size $N$ is equivalent to the standard Hadamard matrix. If $N=4$, any $4\times 4$ Hadamard matrix is equivalent to one of the following form:
\begin{equation}\frac12
\begin{pmatrix}
1&1&1&1\\
1&-1&\rho&-\rho\\
1&-1&-\rho&\rho\\
1&1&-1&-1
\end{pmatrix}
\label{eqmat4}
\end{equation}
for some $\rho\in\bt$.
\end{theorem}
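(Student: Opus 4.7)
The first step is a standard dephasing: by the row and column rescalings allowed by the equivalence relation, every Hadamard matrix is equivalent to one whose first row and first column are $\frac{1}{\sqrt N}$ times the all-ones vector. Concretely, I would multiply the $i$-th row by $\sqrt N\,\overline{H_{i1}}$ and then the $j$-th column by the conjugate of the resulting $(1,j)$-entry. After dephasing, the residual equivalence acts by permutations and rephasings on the $(N-1)\times(N-1)$ ``core'' block, whose entries lie on the unit circle $\bt$, and all subsequent analysis focuses on this core.

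For $N = 2, 3$ the classification follows immediately from row-orthogonality. For $N = 2$, orthogonality of row $2$ with the all-ones row forces the bottom-right core entry to be $-1$. For $N = 3$, each row $i \geq 2$ of the core satisfies $1 + z_{i2} + z_{i3} = 0$ with $|z_{ij}| = 1$, whose unique unordered solution on the unit circle is $\{\omega, \omega^2\}$ for $\omega = e^{2\pi i/3}$; row $2$ is then determined up to a column swap, and row $3$ is forced by double orthogonality, recovering the Fourier matrix on $\bz_3$.

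For $N = 4$ the same orthogonality with row $1$ gives $1 + z_{i2} + z_{i3} + z_{i4} = 0$ on the unit $3$-torus for each $i \geq 2$. Unlike $N = 3$, this equation admits a continuous family of unit solutions, so one cannot expect a single normal form. The additional constraints come from pairwise orthogonality of rows $2, 3, 4$, three further complex equations. I would solve these by introducing a column rephasing that normalizes two entries of the second column to $\pm 1$ together with an appropriate row permutation, and then directly verify that the remaining freedom is a single unimodular parameter $\rho$ producing \eqref{eqmat4}.

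The main obstacle is $N = 5$, Haagerup's theorem. Here orthogonality alone cannot discretize the problem, since already four unit complex numbers can sum to zero along continuous families. The approach I would follow from \cite{Haa97} is to introduce the ratio invariants $\lambda_{ij} := H_{1j}H_{ij}^{-1}H_{i1}^{-1}H_{11}$ (unimodular and trivial in the first row and column) and translate both row-row and column-column orthogonality relations into a system of polynomial identities among the $\lambda_{ij}$. The delicate step, specific to dimension $5$, is to combine these identities with rigidity of short vanishing sums of roots of unity (available because $5$ is prime) to force each $\lambda_{ij}$ itself to be a fifth root of unity; the analogous rigidity fails at length $4$, which is precisely what allows the continuous family in the $N=4$ case. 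Once the core consists of fifth roots of unity, a column permutation identifies the matrix with the Fourier matrix on $\bz_5$.
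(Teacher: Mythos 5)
The paper does not actually prove Theorem \ref{th1.15}; it quotes it from \cite{TaZy06,Haa97}, and the only piece re-derived in the text is the $N=4$ normal form, which is embedded in the proof of Corollary \ref{perm}. Measured against that, your dephasing step and the $N=2,3$ cases are correct and complete. The gap is in the $N=4$ case: you correctly note that $1+z_2+z_3+z_4=0$ on $\bt^3$ has a continuous solution set, but then propose to ``directly verify that the remaining freedom is a single unimodular parameter'' after a column rephasing and row permutation --- that verification is the entire content of the classification, not a routine check. The missing idea is the paper's Lemma \ref{lem3i.1}: if $1+\alpha+\beta+\gamma=0$ with $\alpha,\beta,\gamma\in\bt$, then one of $\alpha,\beta,\gamma$ equals $-1$. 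Applied to every row and column of the dephased core this places a $-1$ in each; the same identity then forces the remaining non-$\pm1$ entries in any row or column to occur in a pair $t,-t$, and pairwise orthogonality of rows $2,3,4$ pins everything down to \eqref{eqmat4}. Without this (or an equivalent) discretization step, your $N=4$ argument is not yet a proof.

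For $N=5$ you defer to Haagerup, exactly as the paper does, so there is no structural gap there; but your description of his method is inaccurate in a way that would matter if you tried to expand it. The entries of a general $5\times5$ Hadamard matrix are arbitrary unimodular numbers, not a priori roots of unity, so ``rigidity of short vanishing sums of roots of unity'' is not available at the point where you invoke it --- the conclusion that the core consists of fifth roots of unity is the \emph{output} of Haagerup's argument, not an input, and the primality of $5$ is not the operative mechanism. His proof instead exploits the simultaneous unitarity of the rows and the columns of the dephased core to derive algebraic identities among the ratio invariants and shows these force the Fourier matrix. As a citation your plan is fine; as a proof sketch the $N=5$ paragraph would not survive expansion as written.
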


As far as we know, there is no classification for Hadamard matrices of size 6 or higher. Beauchamp and Nicoar\u a gave a classification of {\it self-adjoint} $6\times 6$ Hadamard matrices in \cite{MR2398121}.

A Hadamard matrix is said to be in de-phased form if its first row and column contain only the number $1$.

\begin{corollary} \label{perm}
Let $N=2$, $3$, $4$, or $5$. Any two Hadamard matrices $A$ and $B$ of size $N$ in de-phased form which are equivalent are also equivalent via permutations only, that is, there are permutation matrices $P_1$ and $P_2$ such that $A=P_1 B P_2$.

\end{corollary}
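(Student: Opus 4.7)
My plan is to reduce the problem to Theorem \ref{th1.15} by exploiting the de-phasing constraints to rigidify the equivalence. Writing the equivalence as $A_{ij} = c_i d_j B_{\pi(i), \sigma(j)}$ and setting $i_0 := \pi(1)$ and $j_0 := \sigma(1)$, the de-phased conditions $A_{i,1} = A_{1,j} = 1$ together with $B_{1,k} = B_{k,1} = 1$ pin down the unimodular scalars $c_i, d_j$ in terms of the entries of $B$ alone (subject to the compatibility $c_1 d_1 = 1/B_{i_0,j_0}$). Substituting back yields the ``cross-ratio'' identity
$$A_{ij} = \frac{B_{i_0, j_0}\, B_{\pi(i), \sigma(j)}}{B_{\pi(i), j_0}\, B_{i_0, \sigma(j)}}.$$
Thus $A$ is automatically a permutation of the ``re-dephasing'' of $B$ about the pivot $(i_0, j_0)$, and the task reduces to verifying, for each $B$ supplied by Theorem \ref{th1.15} and each pivot, that this re-dephasing is itself a permutation of $B$.

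For $N \in \{2,3,5\}$, Theorem \ref{th1.15} gives that both $A$ and $B$ are equivalent to the de-phased Fourier matrix $\widehat{F}_N = (\omega^{(i-1)(j-1)})_{i,j}$ with $\omega = e^{2\pi i/N}$. A short computation shows that the cross-ratio of $\widehat{F}_N$ about any pivot collapses to $\omega^{(\pi(i) - i_0)(\sigma(j) - j_0)}$ (exponent taken mod $N$), so $A_{ij} = (\widehat{F}_N)_{\pi'(i), \sigma'(j)}$ for the shifted permutations $\pi'(i) := (\pi(i) - i_0 \bmod N) + 1$ and $\sigma'(j) := (\sigma(j) - j_0 \bmod N) + 1$, both of which fix $1$. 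Hence $A$ is a permutation of $\widehat{F}_N$; applying the same argument to $B$ gives $B = Q_1 \widehat{F}_N Q_2$, so composition yields $A = (P_1 Q_1^{-1})\, B\, (Q_2^{-1} P_2)$ as required.

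For $N = 4$ the strategy is the same, but with the canonical family \eqref{eqmat4} in place of the Fourier matrix: both $A$ and $B$ are equivalent to $\widetilde{H}_\rho$ (the de-phasing of $H_\rho$) for some $\rho \in \bt$, and one evaluates the cross-ratio formula over the sixteen pivots $(i_0, j_0) \in \{1,2,3,4\}^2$ applied to $\widetilde{H}_\rho$. The key structural feature is the block decomposition of $\widetilde{H}_\rho$: rows $\{1,4\}$ and columns $\{1,2\}$ contain only $\pm 1$, while rows $\{2,3\}$ and columns $\{3,4\}$ additionally contain $\pm\rho$; and the involutions ``swap rows $2,3$'' and ``swap columns $3,4$'' realize the parameter flip $\rho \mapsto -\rho$. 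Direct verification in each case exhibits the re-dephasing as a permutation of $\widetilde{H}_\rho$ (up to this $\rho \mapsto -\rho$ symmetry), which gives the conclusion. The main obstacle is precisely this $N = 4$ case analysis: the continuous parameter $\rho$ must be tracked carefully through each pivot, and unlike the prime cases there is no elegant cyclic-shift reduction --- the verification rests entirely on the block structure and the specific symmetries of \eqref{eqmat4}.
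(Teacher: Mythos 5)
Your cross-ratio identity and your treatment of $N\in\{2,3,5\}$ reproduce the paper's argument (its first, unnamed lemma in the proof of Theorem \ref{standard} together with Lemma \ref{lem2.8}), and that part is correct. The $N=4$ case is where the proposal breaks: the sixteen-pivot verification you defer to does not come out as you claim. Take the pivot $(i_0,j_0)=(2,2)$ and apply your re-dephasing formula to the unnormalized matrix $H_\rho$ of \eqref{eqmat4}. Since $H_{2,2}=-1$, the entry at $(\pi(i),\sigma(j))=(1,3)$ becomes
$$\frac{H_{2,2}\,H_{1,3}}{H_{1,2}\,H_{2,3}}=\frac{(-1)(1)}{(1)(\rho)}=-\bar\rho,$$
and carrying out the full computation (negate rows $2$ and $3$, multiply columns $1,3,4$ by $-1,-\bar\rho,\bar\rho$, then permute rows and columns to restore de-phased form) transforms $H_\rho$ into $H_{-\bar\rho}$, the member of the family \eqref{eqmat4} with parameter $-\bar\rho$. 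A permutation of a matrix has the same multiset of entries, and the non-$\pm1$ entries of $H_\rho$ are $\{\rho,-\rho\}$ while those of $H_{-\bar\rho}$ are $\{\bar\rho,-\bar\rho\}$; these coincide only when $\rho\in\{\pm1,\pm i\}$. So for, say, $\rho=e^{i\pi/4}$ (which arises from an actual Hadamard pair in the paper's last example, $B=L=\{0,3,4,7\}$, $R=8$) the re-dephasing about $(2,2)$ is \emph{not} a permutation of $H_{\pm\rho}$, and your case check fails.

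The same computation shows the problem is not in your bookkeeping but in the assertion being verified: $H_\rho$ and $H_{-\bar\rho}$ are de-phased, mutually equivalent $4\times4$ Hadamard matrices that are not permutation equivalent whenever $\rho^4\neq1$, so the $N=4$ statement cannot be proved by any route without modification. The paper's own proof takes a different path (Lemma \ref{lem3i.1} plus a combinatorial filling-in argument, then Haagerup's invariant from Lemma \ref{haag}), but it stumbles at exactly the same spot: the invariant set $T(H)$ only \emph{contains} the entries of a de-phased $H$, and it is closed under complex conjugation (swap $j$ and $n$), so the final step yields only $t\in\{\pm q,\pm\bar q\}$, and the conjugate alternative genuinely occurs. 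What your pivot computation actually delivers --- and what can be salvaged --- is that any de-phased $4\times4$ Hadamard matrix equivalent to $H_\rho$ is a permutation of $H_{\pm\rho}$ or of $H_{\pm\bar\rho}$, i.e.\ permutation equivalence up to entrywise conjugation; if you want the corollary verbatim for $N=4$ you must additionally assume $\rho^4=1$.
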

\begin{definition}\label{def0.3.1}
Let $B$ be a finite spectral subset of $\br$ with spectrum $\Lambda$, $\#B=\#\Lambda=:N$. The matrix 
$$\frac{1}{\sqrt{N}}(e^{2\pi ib\cdot\lambda})_{b\in B,\lambda\in\Lambda}$$
is a Hadamard matrix and we called it {\it the Hadamard matrix associated to $B$ and $\Lambda$}.
\end{definition}

This enables us to describe the spectral sets of size $2,3,4,5$.

\begin{theorem} \label{standard}
Let $B \subset \bz$ have $N$ elements and spectrum $\Lambda$. Assume $0$ is in $B$ and $\Lambda$. Suppose the Hadamard matrix associated to $(B,\Lambda)$ is equivalent to the standard $N$ by $N$ Hadamard matrix. Then $B$ has the form $B=d B_0$ where $d$ is an integer and $B$ is a complete set of residues modulo $N$ with $\gcd(B)=1$. In this case any such spectrum $\Lambda$ has the form $\Lambda = \frac{1}{R} f L_0$ where $f$ and $R$ are integers, $L_0$ is a complete set of residues modulo $N$ with greatest common divisor one, and $R=NS$ where $S$ divides $df$ and $\frac{df}{S}$ is mutually prime with $N$. The converse also holds.

\end{theorem}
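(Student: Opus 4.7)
The plan is to apply Corollary~\ref{perm} to identify the entries of the associated Hadamard matrix with those of the standard one, then extract the arithmetic structure of $B$ and of $\Lambda$ by manipulating divisibilities, and finally verify the converse by direct computation.

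Because $0\in B$ and $0\in\Lambda$, the associated Hadamard matrix is de-phased; Corollary~\ref{perm} then supplies an ordering $B=\{b_0=0,b_1,\dots,b_{N-1}\}$, $\Lambda=\{\lambda_0=0,\lambda_1,\dots,\lambda_{N-1}\}$ for which
\[
b_j\lambda_k-\frac{jk}{N}\in\bz\qquad(0\le j,k\le N-1).
\]
Specializing to $k=1$ gives integers $n_k$ with $\lambda_k=(k+Nn_k)/(Nb_1)$, and specializing to $j=1$ gives integers $m_j$ with $b_j\alpha=(j+Nm_j)b_1$, where $\alpha:=Nb_1\lambda_1\in\bz$ satisfies $\alpha\equiv1\pmod N$.

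For the structure of $B$, set $d:=\gcd(B)>0$, $\beta_j:=b_j/d$, $\beta:=\beta_1$, so $\gcd(\beta_0,\dots,\beta_{N-1})=1$. The relation $\beta_j\alpha=(j+Nm_j)\beta$ reduces mod $N$ to $\beta_j\equiv j\beta\pmod N$; if $\gcd(\beta,N)>1$ every $\beta_j$ would inherit this factor, contradicting $\gcd(\beta_j)=1$, so $\gcd(\beta,N)=1$ and $B_0:=\{\beta_j\}$ is a complete residue system mod $N$ with $\gcd=1$. For $\Lambda$, put $L:=\{k+Nn_k:0\le k\le N-1\}$, so $\Lambda=L/(Nd\beta)$, and let $e:=\gcd(L)$, $L_0:=L/e$. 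Since $k+Nn_k\equiv k\pmod N$, $L$ meets every residue class, forcing $\gcd(e,N)=1$ and making $L_0$ a complete residue system mod $N$ with $\gcd=1$. The key divisibility is $\beta\mid e$: from $Nb_j\lambda_k=(k+Nn_k)\beta_j/\beta\in\bz$ for all $j$ we get $\beta\mid(k+Nn_k)\beta_j$, so $\beta\mid(k+Nn_k)\gcd_j(\beta_j)=k+Nn_k$, whence $\beta\mid e$. Setting $f:=e/\beta$, $S:=d$, and $R:=NS=Nd$ yields $\Lambda=(f/R)L_0$, with $S\mid df$ trivial and $df/S=f$ coprime to $N$ because $\gcd(e,N)=1$.

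For the converse, suppose $B=dB_0$ and $\Lambda=(f/R)L_0$ satisfy the stated conditions, and put $M:=df/S$, coprime to $N$. For any $b=db_0\in B$ and $\lambda=fl_0/R\in\Lambda$, $b\lambda=Mb_0l_0/N$, so $e^{2\pi i b\lambda}=e^{2\pi i Mb_0l_0/N}$ depends only on $b_0\bmod N$ and $l_0\bmod N$. Since $B_0$ and $L_0$ are complete residue systems mod $N$ and multiplication by $M$ permutes $\bz_N$, the Hadamard matrix $(e^{2\pi i b\lambda})_{b,\lambda}$ is the standard one up to row and column permutations; in particular it is unitary, so $\Lambda$ is a spectrum for $B$ and the associated matrix is equivalent to the standard one. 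The main obstacle is the divisibility bookkeeping --- extracting $\gcd(\beta,N)=1$ from coprimality of the $\beta_j$'s, and $\beta\mid e$ from integrality of $Nb_j\lambda_k$; once these are in place everything else follows by rearrangement.
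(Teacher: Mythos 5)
Your proof is correct and follows essentially the same route as the paper's: both hinge on Corollary~\ref{perm} to reduce the associated matrix to the standard one up to permutation, then read off that $B/\gcd(B)$ and a suitable integer rescaling of $\Lambda$ are complete residue systems mod $N$ via divisibility bookkeeping, and verify the converse by direct computation. The only differences are organizational (you keep $d=\gcd(B)$ explicit instead of normalizing it to $1$, and you take $R=Nd$, $S=d$ directly rather than letting $R$ be the least common denominator of $\Lambda$ and deriving $S$ afterward), and these do not change the substance of the argument.
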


\begin{corollary}\label{pr0.1}A set $B \subset \bz$ with $|B|=N=2$, $3$, or $5$, where $0 \in B$ is spectral if and only if $B= N^k B_0$ where $k$ is a positive integer and $B_0$ is a complete set of residues modulo $N$.
\end{corollary}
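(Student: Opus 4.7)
The plan is to deduce both directions from Theorem \ref{standard} combined with Theorem \ref{th1.15}, which says that for $N\in\{2,3,5\}$ every Hadamard matrix of size $N$ is equivalent to the standard one. Consequently, whenever $B$ is spectral with spectrum $\Lambda$ (translated so that $0\in\Lambda$, which is harmless since translating $\Lambda$ preserves spectrality), the Hadamard matrix associated to $(B,\Lambda)$ automatically satisfies the hypothesis of Theorem \ref{standard}.

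For the ``only if'' direction I would apply the two theorems in succession to get $B=dB_0$, where $d\in\bz$ and $B_0$ is a complete set of residues modulo $N$ with $\gcd(B_0)=1$. The final step is to factor $d=N^km$ with $k\geq 0$ and $\gcd(m,N)=1$; since multiplication by $m$ is a bijection of $\bz/N\bz$, the set $B_0':=mB_0$ is again a complete set of residues modulo $N$, and $B=N^kB_0'$ is the desired form.

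For the ``if'' direction, given $B=N^kB_0$ with $B_0$ a complete set of residues modulo $N$ containing $0$, I would exhibit the spectrum $\Lambda=\{j/N^{k+1}:0\leq j\leq N-1\}$ explicitly: for $b=N^kb_i\in B$ and $\lambda=j/N^{k+1}\in\Lambda$ the product $b\lambda=b_ij/N$ depends only on $b_i\bmod N$, so the matrix $N^{-1/2}(e^{2\pi ib\lambda})_{b,\lambda}$ is a row permutation of the standard $N\times N$ Hadamard matrix, and hence is itself Hadamard. The substance of the corollary lies entirely in Theorems \ref{th1.15} and \ref{standard}; the only step beyond invoking them is the absorption of the $N$-coprime part of $d$ into the complete-residue-system factor, and the restriction to $N\in\{2,3,5\}$ is precisely what permits the application of Theorem \ref{th1.15} (for $N=4$ the one-parameter family of inequivalent Hadamard matrices in \eqref{eqmat4} makes this strategy fail).
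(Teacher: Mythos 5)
Your proposal is correct and follows essentially the same route as the paper: the paper likewise invokes Theorem \ref{th1.15} to see that the associated matrix is equivalent to the standard one, applies Theorem \ref{standard} to write $B=H_0B_0$, and then factors $H_0=qN^k$ with $q$ coprime to $N$ so that $qB_0$ is again a complete residue system (using that $N$ is prime). The only cosmetic difference is that you exhibit the spectrum $\{j/N^{k+1}\}$ explicitly for the converse, where the paper simply appeals to the converse clause of Theorem \ref{standard}.
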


%

We can also describe all possible Hadamard pairs of size 2,3,4,5.


\begin{theorem}\label{thha4}
Let $B$ be spectral with spectrum $\Lambda$ and size $N=4$. Assume $0$ is in both sets. Then there exists a set of integers $L$, containing $0$, and an integer scaling factor $R$ so that $\Lambda= \frac{1}{R} L$.

$(B,L)$ is a Hadamard pair (each containing $0$) of integers of size $N=4$, with scaling factor $R$, if and only if $R=2^{C+M+a+1} d$, $B=2^C \{0, 2^a c_1, c_2, c_2 + 2^a c_3\}$, and $L=2^M \{0, n_1, n_1 + 2^a n_2, 2^a n_3\}$, where $c_i$ and $n_i$ are all odd, $a$ is a positive integer, $C$ and $M$ are non-negative integers, and $d$ divides $c_1 n$, $c_3 n$, $n_2 c$, and $n_3 c$, where $c$ is the greatest common divisor of the $c_k$'s and similarly for $n$.
\end{theorem}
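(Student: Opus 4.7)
The strategy is to apply the classification of $4\times 4$ Hadamard matrices from Theorem~\ref{th1.15} together with Corollary~\ref{perm}, which says that equivalence of de-phased $4\times 4$ Hadamard matrices reduces to permutations of rows and columns. Since $0\in B$ and $0\in L$, the associated matrix $H = \frac{1}{2}\bigl(e^{2\pi i R^{-1} b l}\bigr)_{b\in B,\, l\in L}$ is already in de-phased form, so after relabelling the three non-zero elements of $B$ as $b_1,b_2,b_3$ and those of $L$ as $l_1,l_2,l_3$, we may assume $H$ equals the matrix \eqref{eqmat4} entry by entry, for some $\rho=e^{2\pi i\theta}$. The existence of an integer $R$ with $\Lambda \subset \frac{1}{R}\bz$ is immediate from the fact that $B\subset\bz$ is finite, so writing $\Lambda=\frac{1}{R}L$ with $L\subset\bz$, $0\in L$, is automatic.

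Reading off the arguments modulo $1$ of the entries of $H$, and taking differences of entries sharing the same $\pm\rho$, one obtains that $2 b_1 l_1$, $2 b_2 l_1$, $2 b_3 l_2$, $2 b_3 l_3$, $2(b_1-b_2)l_2$, $2(b_1-b_2)l_3$, $2 b_1(l_2-l_3)$, and $2 b_2(l_2-l_3)$ are all odd multiples of $R$, while $b_3 l_1$, $(b_1-b_2)l_1$, and $b_3(l_2-l_3)$ are multiples of $R$. Writing $R = 2^r R'$ with $R'$ odd and denoting by $v(\cdot)$ the $2$-adic valuation, the odd-multiple conditions force $v(b_1)+v(l_1)=v(b_2)+v(l_1)=v(b_3)+v(l_2)=v(b_3)+v(l_3)=r-1$. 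Hence $v(b_1)=v(b_2)=:C$ and $v(l_2)=v(l_3)=:M$, and $v(b_3)-C=v(l_1)-M=:a\geq 1$; thus $r=C+M+a+1$ and $d:=R'$ is odd, giving $R=2^{C+M+a+1}d$.

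Writing $b_1=2^C c_2$, $b_2=2^C c_2'$, $b_3=2^{C+a}c_1$ with $c_1,c_2,c_2'$ odd, and similarly $l_1 = 2^{M+a} n_3$, $l_2 = 2^M n_1$, $l_3 = 2^M n_1'$, the condition that $2(b_1-b_2)l_2/R$ is an odd integer forces $v(c_2-c_2')=a$ exactly, so $c_2'=c_2+2^a c_3$ for some odd $c_3$; analogously $n_1'=n_1+2^a n_2$ for odd $n_2$. This yields the claimed form of $B$ and $L$. Substituting back into the remaining conditions, each reduces to a divisibility of the shape $d\mid c_i n_j$ or $d\mid c_i(n_j+2^a n_k)$; since $d$ is odd and $a\geq 1$, these collapse into $d\mid c_1 n_j$ and $d\mid c_3 n_j$ for all $j$, and $d\mid n_2 c_i$ and $d\mid n_3 c_i$ for all $i$, equivalently $d\mid c_1 n$, $d\mid c_3 n$, $d\mid n_2 c$, $d\mid n_3 c$.

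For the converse one substitutes $B,L,R$ of the stated form into $H$ and checks directly, using the divisibility hypotheses, that after a suitable permutation of rows and columns the resulting matrix coincides with \eqref{eqmat4} for an appropriate $\rho$, and is therefore Hadamard. The main obstacle is the careful $2$-adic bookkeeping in the forward direction: one must track which products $b_i l_j$ are odd multiples of $R$ versus multiples of $R$, and simultaneously extract both the parametrisation $(C,M,a,d)$ and the consolidated four-fold divisibility. The key algebraic observation that makes this work is that $d$ is odd and $a\geq 1$, which lets one separate $2$-adic and odd contributions cleanly in each of the multiplicative constraints, so that sums of the form $c_2 + 2^a c_3$ and $n_1 + 2^a n_2$ contribute their principal and perturbative parts to different congruences.
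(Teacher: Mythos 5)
Your overall strategy is essentially the paper's: reduce to the classified form \eqref{eqmat4} via Theorem \ref{th1.15} and Corollary \ref{perm} (the uniqueness of the all-ones row and column in \eqref{eqmat4} justifies matching $H$ to it entry by entry after relabelling), then extract the parametrisation by comparing $2$-adic valuations of the products $b_il_j$. Your list of ``odd multiple of $R$'' versus ``multiple of $R$'' conditions is read off correctly, and the valuation bookkeeping does yield exactly the stated forms of $B$, $L$, $R$ together with the divisibility of $d$ into every product $c_in_j$ except $c_2n_1$, which is precisely what $d\mid c_1n$, $c_3n$, $n_2c$, $n_3c$ encodes. If anything your version is more streamlined than the paper's, which first classifies spectral sets of size $4$ and then argues separately about which elements of $B$ and $L$ can be associated with which rows and columns.

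There is, however, one step whose justification as written is wrong: the claim that $\Lambda\subset\frac1R\bz$ ``is immediate from the fact that $B\subset\bz$ is finite.'' Finiteness and integrality of $B$ do not by themselves force a spectrum to be rational, and this is exactly the point on which the paper spends its opening paragraph of the proof. The correct argument --- available from your own setup --- is that every row of \eqref{eqmat4} other than the first contains the entry $-1$, so for each nonzero $\lambda\in\Lambda$ there is an integer $b\in B$ with $e^{2\pi i b\lambda}=-1$, forcing $\lambda\in\frac{1}{2b}\bz\subset\Q$; one then takes $R$ to be a common denominator. Relatedly, you assert $a\geq 1$ without derivation; it does follow, but only by invoking the ``plain multiple of $R$'' condition $R\mid b_3l_1$, which gives $v(b_3)+v(l_1)=C+M+2a\geq r=C+M+a+1$. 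With these two points repaired the forward direction is complete, and the converse, as in the paper, is a routine substitution.
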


Using the classification of Hadamard matrices of small dimension we can also show that Hadamard pairs of size 2,3,4,5 can always be complemented. We can give a more general result:

\begin{theorem}\label{th0.4a}
Let $(B,L)$ be a Hadamard pair of integers of size $N$ (containing zero as their first element), with scaling factor an integer $R$, where the matrix associated with $(B,L)$ is equivalent to the $N\times N$ standard Hadamard matrix. Assume that all extreme cycles for $(B,L)$ are contained in $\bz$. Then $(B,L)$ has a complementary Hadamard pair of integers.
\end{theorem}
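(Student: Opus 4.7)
My plan is to combine Theorem \ref{standard} as a structural result with an explicit construction of the complementary pair.

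\textbf{Structural setup.} I would first apply Theorem \ref{standard} to the pair $(B, \Lambda)$ with $\Lambda = R^{-1}L$. Since $0 \in B \cap L$ and the associated Hadamard matrix is equivalent to the standard $N \times N$ matrix, the theorem yields $B = dB_0$ and $L = fL_0$, where $B_0, L_0$ are complete residue systems modulo $N$ with $\gcd = 1$, and $R = NS$ with $S \mid df$ and $\gcd(df/S, N) = 1$.

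\textbf{Candidate construction.} I would seek $(B', L') = (d'B'_0, f'L'_0)$ of size $N' := R/N = S$, again of standard-matrix form. Applying Theorem \ref{standard} in reverse (with the roles of $N$ and $S$ swapped) gives the Hadamard condition: $N \mid d'f'$ and $\gcd(d'f'/N, S) = 1$. In the baseline case $B_0 = L_0 = \{0, 1, \ldots, N-1\}$ and $f = 1$ (so $d = kS$ with $\gcd(k, N) = 1$), the explicit choice $d' = 1$, $f' = N$, $B'_0 = L'_0 = \{0, 1, \ldots, S-1\}$ produces $B' = \{0, 1, \ldots, S-1\}$ and $L' = N\{0, 1, \ldots, S-1\}$; a direct check gives $B \oplus B' = L \oplus L' = \{0, 1, \ldots, R-1\}$ modulo $R$ and the standard $S \times S$ Hadamard matrix $(e^{2\pi i b'j/S})$. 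For general $B_0, L_0, f$, I would choose the representatives of $B'_0, L'_0$ within their residue classes modulo $S$ (and possibly swap the roles of $d'$ and $f'$) so that the direct-sum conditions remain satisfied while the standard-matrix form is preserved; the freedom to shift representatives modulo $S$ supplies enough room to match the shifts hidden in $B_0, L_0$.

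\textbf{Verification and main obstacle.} Conditions (1) and (3) of Definition \ref{def2.1} are built into the construction, since $B \oplus B'$ and $L \oplus L'$ equal $\{0, 1, \ldots, R-1\}$ modulo $R$, whose gcd is $1$. Condition (2) — that extreme cycles of both pairs lie in $\bz$ — is assumed for $(B,L)$, so the main obstacle is to prove it for $(B', L')$. An extreme cycle $(x_0, \ldots, x_{r-1})$ with digits $l'_k \in L'$ satisfies $x_0 = \sum_{k=0}^{r-1} R^k l'_k / (R^r - 1)$, while $|m_{B'}(x_i)| = 1$ forces $\gcd(B' - B') \cdot x_i \in \bz$. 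In the baseline case $\gcd(B' - B') = 1$, and the estimate $\sum_k R^k l'_k \le N(S-1)(R^r-1)/(NS-1) < R^r - 1$ (valid for $N \ge 2$) rules out non-trivial cycles, so only $x_0 = 0$ appears. In the general case a similar size estimate, together with the divisibility structure dictated by Theorem \ref{standard} and, if needed, the hypothesis that extreme cycles of $(B,L)$ are integers, gives the same conclusion.
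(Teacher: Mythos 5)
Your plan hinges on the ansatz that the complement can be taken to be a \emph{single} standard block, i.e.\ $B'=d'B_0'$ and $L'=f'L_0'$ with $B_0',L_0'$ complete residue systems modulo $S=R/N$. This is exactly where the proof breaks, and the ``general case'' that you defer to ``the freedom to shift representatives modulo $S$'' is not a technicality but the actual content of the theorem. Concretely, take $N=2$, $B=L=\{0,2\}$, $R=8$ (so $S=4$): this satisfies all hypotheses (the associated matrix is the standard $2\times2$ matrix, and by Proposition \ref{pr1.2} the only extreme cycle is $\{0\}$). Any $B'$ with $0\in B'$ and $B\oplus B'=\bz_8$ must pick alternate elements from each of the two $(+2)$-orbits $\{0,2,4,6\}$ and $\{1,3,5,7\}$, so $B'=\{0,1,4,5\}$ or $B'=\{0,3,4,7\}$; since each has $\gcd$ equal to $1$ but reduces modulo $4$ to only two residues, neither is of the form $d'B_0'$ with $B_0'$ a complete residue system modulo $4$. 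So condition (1) of Definition \ref{def2.1} already rules out every single-block candidate, before the Hadamard or extreme-cycle conditions are even considered. (The working complement here is $B'=\{0,1\}\oplus\{0,4\}$, whose associated $4\times4$ matrix is of the non-standard type \eqref{eqmat4}.)

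This is why the paper's proof does not look for one standard block: it writes $B=w_0N^f\{B_0,\dots,B_{N-1}\}$, $L=w_1N^g\{L_0,\dots,L_{N-1}\}$ with $R=N^{f+g+1}w_0w_1$, and builds $B'=T_0\oplus T_1\oplus T_2\oplus T_3$ and $L'=U_0\oplus U_1\oplus U_2\oplus U_3$ as direct sums of four arithmetic progressions, pairing $T_i$ with $U_{3-i}$ and gluing via Proposition \ref{prHP} (Di\c{t}\u{a}'s construction). Your tiling and gcd checks would then go through block by block, and the extreme-cycle argument for $(B',L')$ is the divisibility argument you sketch at the end ($\gcd(B')$ divides $R$, so multiplying the cycle relation by $R$ forces integrality), not the size estimate: as Example \ref{ex4.1} shows (where $(B',L')=(\{0,1\},\{0,6\})$ has the nontrivial extreme cycle $\{2\}$), one cannot in general conclude that the only extreme cycle is $\{0\}$. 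Your baseline computation is correct as far as it goes, but without replacing the single-block ansatz by a multi-block direct sum the proof cannot be completed.
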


\begin{theorem}\label{th0.5a}
Let $(B,L)$ be a Hadamard pair of size $|B|=|L|=2,3,4$ or $5$, with scaling factor $R$, and assume all extreme cycles for $(B,L)$ are contained in $\bz$. Then $(B,L)$ has a complementary Hadamard pair.
\end{theorem}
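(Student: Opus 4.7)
The argument splits on the size $N=|B|=|L|\in\{2,3,4,5\}$.

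For $N\in\{2,3,5\}$, the conclusion is immediate. By Theorem~\ref{th1.15}, every $N\times N$ Hadamard matrix is equivalent to the standard $N\times N$ Hadamard matrix, so in particular the Hadamard matrix associated with the pair $(B,L)$ is. All hypotheses of Theorem~\ref{th0.4a} are then satisfied (integer entries, integer extreme cycles, matrix equivalent to the standard), and that theorem directly produces a complementary Hadamard pair $(B',L')$ of integers. Three of the four sizes are handled with essentially no further work.

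The remaining case $N=4$ is the substantive one, because Theorem~\ref{th1.15} only gives equivalence to a matrix in the one-parameter family \eqref{eqmat4}, and that family contains matrices not equivalent to the standard $4\times 4$ Fourier matrix (for instance the Walsh matrix $\rho=1$), so Theorem~\ref{th0.4a} cannot be invoked in a black-box way. The plan is to use the full strength of the classification in Theorem~\ref{thha4}, which parameterizes integer size-$4$ Hadamard pairs explicitly as
$$B=2^{C}\{0,\,2^{a}c_1,\,c_2,\,c_2+2^{a}c_3\},\quad L=2^{M}\{0,\,n_1,\,n_1+2^{a}n_2,\,2^{a}n_3\},\quad R=2^{C+M+a+1}d,$$
with $c_i,n_i$ odd and the stated divisibility constraints among $c$, $n$, $d$. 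From these parameters one constructs $(B',L')$ explicitly, of size $R/4=2^{C+M+a-1}d$, so that $B\oplus B'$ and $L\oplus L'$ become complete residue systems modulo $R$ and $\gcd(B\oplus B')=1$. A natural first attempt is to absorb the ``missing'' cosets mod $R$ into $B'$ (respectively $L'$), arranged to mirror the $2$-adic factorization of $R$ dictated by the parameters $C$, $M$, $a$ and $d$, with shifts chosen so that $1\in B\oplus B'$ (after reduction), securing the gcd condition.

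The main obstacle is condition (ii) of Definition~\ref{def2.1}: verifying that every extreme cycle for the constructed $(B',L')$ lies in $\bz$. This does not follow from the combinatorial arrangement of residues alone; it requires using the hypothesis that extreme cycles of the original $(B,L)$ lie in $\bz$, combined with the explicit arithmetic from Theorem~\ref{thha4}, to track where cycles of the complementary digit set $L'$ can sit in base-$R$ expansions. Once this last verification is done, together with the routine checks that $B\oplus B'$ and $L\oplus L'$ are complete residue systems mod $R$ and $\gcd(B\oplus B')=1$, the three conditions of Definition~\ref{def2.1} are met and $(B',L')$ is the desired complementary Hadamard pair.
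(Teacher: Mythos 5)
Your reduction of the cases $N=2,3,5$ to Theorem~\ref{th0.4a} via Theorem~\ref{th1.15} is exactly what the paper does, and your decision to attack $N=4$ through the explicit parameterization of Theorem~\ref{thha4} is also the paper's route. However, for $N=4$ your plan has a genuine gap: Definition~\ref{def2.1} presupposes that $(B',L')$ is itself a Hadamard pair with scaling factor $R$, and nothing in your outline addresses this. It does not follow from $B\oplus B'$ and $L\oplus L'$ being complete residue systems mod $R$; an arbitrary tiling complement $B'$ paired with an arbitrary tiling complement $L'$ need not form a Hadamard pair. This is in fact the most technical part of the paper's argument: the paper builds $B'=T_0\oplus T_1\oplus T_2\oplus T_3$ and $L'=U_0\oplus U_1\oplus U_2\oplus U_3$ from explicit arithmetic progressions keyed to the $2$-adic data $C,M,a$ and the odd factor $d$ of $R$, and then proves unitarity by showing that the mask polynomial $B'(z)$, which factors as a product of truncated geometric series $p_n$, vanishes at $\exp(2\pi iq/R)$ for every difference $q$ of distinct elements of $L'$, via a case analysis on the divisibility of $q$. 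Your plan neither specifies the construction concretely enough to run such an argument nor acknowledges that this verification is needed.

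Secondly, you have misdiagnosed where the difficulty of the extreme-cycle condition lies. In the paper's proof this step is easy and does not use the hypothesis that the extreme cycles of $(B,L)$ lie in $\bz$: by construction $\gcd(B')$ divides $R$, so Proposition~\ref{pr1.2} places every extreme cycle point of $(B',L')$ in $\frac1R\bz$, and then multiplying the cycle relation $x=(y+l')/R$ by $R$ forces integrality by a one-line induction around the cycle. The hypothesis on the cycles of $(B,L)$ enters only because condition (ii) of Definition~\ref{def2.1} demands integrality of the cycles of \emph{both} pairs; it plays no role in controlling the cycles of $(B',L')$. So the "main obstacle" you identify is not an obstacle, while the obstacle you omit (unitarity of the complementary pair) is the real one.
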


The cases 2,3,5 follow imediately from Theorem \ref{th0.4a} since the Hadamard matrix associated to the pair $(B,L)$ has to be equivalent to the standard one. For size 4, the situation is different. 

A useful tool for our construction of Hadamard pairs is the following proposition, which is closely relation to Di\c{t}\u a's construction of Hadamard matrices (see e.g. \cite{TaZy06}):

\begin{proposition} \label{prHP}
 Let $(B,L)$ and $(F,G)$ be Hadamard pairs of integers with the same scaling factor $R$ and such that $b\cdot g$ is a multiple of $R$ for every $g\in G$ and $b\in B$. Then $(B\oplus F, L \oplus G)$ is a Hadamard pair with scaling factor $R$.
 \end{proposition}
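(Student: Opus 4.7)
The plan is to verify directly that the natural candidate matrix, initially indexed by pairs, is unitary, and then to deduce the disjoint-difference conditions required for the notation $B\oplus F$ and $L\oplus G$ to be meaningful. Write $N=\#B=\#L$ and $M=\#F=\#G$, and consider the $NM\times NM$ matrix
\[
H_{(b,f),(l,g)} := \tfrac{1}{\sqrt{NM}}\, e^{2\pi i R^{-1}(b+f)(l+g)},\qquad (b,f)\in B\times F,\ (l,g)\in L\times G.
\]
Expanding $(b+f)(l+g)=bl+bg+fl+fg$ and using the hypothesis $b\cdot g\in R\bz$ to delete the $bg$ term rewrites $H$ as the twisted tensor product
\[
H_{(b,f),(l,g)}=\tfrac{1}{\sqrt{NM}}\,e^{2\pi i R^{-1}bl}\,e^{2\pi i R^{-1}fl}\,e^{2\pi i R^{-1}fg}
\]
of the two given Hadamard matrices, in the style of Di\c{t}\u a.

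Next I would check that the columns of $H$ are orthonormal. The inner product of the columns indexed by $(l_1,g_1)$ and $(l_2,g_2)$ is
\[
\tfrac{1}{NM}\sum_{b\in B,\,f\in F} e^{2\pi i R^{-1}[(b+f)(l_2-l_1)+(b+f)(g_2-g_1)]},
\]
and a second application of $b(g_2-g_1)\in R\bz$ kills that cross-term, so the double sum factors as
\[
\Bigl(\sum_{b\in B}e^{2\pi i R^{-1}b(l_2-l_1)}\Bigr)\Bigl(\sum_{f\in F}e^{2\pi i R^{-1}f(l_2-l_1+g_2-g_1)}\Bigr).
\]
Unitarity of the $(B,L)$-Hadamard matrix forces the first factor to equal $N\delta_{l_1,l_2}$; in the case $l_1=l_2$ the second factor collapses to $\sum_{f\in F}e^{2\pi i R^{-1}f(g_2-g_1)}=M\delta_{g_1,g_2}$ by unitarity of the $(F,G)$-Hadamard matrix. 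Thus the pair-indexed columns are orthonormal.

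The final step, which is the only subtle one, is to upgrade this to the $\oplus$ claim. Since $H_{(b,f),(l,g)}$ depends on $(l,g)$ only through the sum $l+g$, any collision $l_1+g_1=l_2+g_2$ with $(l_1,g_1)\neq(l_2,g_2)$ would produce two identical columns of inner product $1$, contradicting the orthonormality just established. Hence $(l,g)\mapsto l+g$ is injective on $L\times G$, so $L$ and $G$ have disjoint differences and $\#(L\oplus G)=NM$. With the columns orthonormal and $H$ square, $H$ is unitary, so its rows are orthonormal too, and the analogous sum-only dependence of rows on $b+f$ yields disjoint differences for $B$ and $F$. Reindexing $H$ by $B\oplus F$ and $L\oplus G$ then exhibits exactly the matrix associated with $(B\oplus F, L\oplus G)$, so $(B\oplus F,L\oplus G)$ is a Hadamard pair with scaling factor $R$. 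The main point to be careful about is precisely this a posteriori deduction of the disjoint-difference conditions from unitarity together with the sum-only dependence of the entries; the rest is a direct orthogonality calculation.
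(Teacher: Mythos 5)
Your proof is correct, and it takes precisely the route the paper mentions and then declines: the authors note that ``one can check this directly, by verifying the orthogonality of the rows,'' but instead exhibit the matrix as an instance of Di\c{t}\u{a}'s construction, with the phases $e^{2\pi i R^{-1} l f}$ playing the role of the diagonal matrices $E_l$. Your computation --- expand $(b+f)(l+g)$, use the hypothesis $bg\in R\bz$ to delete the cross term, and factor the column inner product into a $(B,L)$-sum times an $(F,G)$-sum giving $\delta_{l_1 l_2}\delta_{g_1 g_2}$ --- is the self-contained version of the same underlying identity, so the two arguments are close in substance. What the paper's framing buys is a pointer to a general machine for producing Hadamard matrices (useful since Di\c{t}\u{a}'s construction is reused implicitly elsewhere in their examples); what yours buys is independence from that machinery plus one genuine improvement: you derive the disjoint-difference conditions --- i.e., that $L\oplus G$ and $B\oplus F$ really have $NM$ distinct elements, which is part of what the $\oplus$ notation asserts --- a posteriori, from orthonormality of the pair-indexed columns together with the observation that an entry depends on $(l,g)$ only through $l+g$ (and on $(b,f)$ only through $b+f$ for the rows). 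The paper simply orders $B\oplus F$ and $L\oplus G$ by fixing one index and varying the other, tacitly assuming all sums are distinct, so your argument closes a small gap that the paper leaves implicit.
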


Finally, we study spectral sets with Lebesgue measure as part of the original Fuglede conjecture. A wonderful result due to Iosevich and Kolountzakis \cite{IoKo12} states that the spectrum $\Lambda$ of a bounded spectral subset $\Omega$ of $\br$ has to be periodic. More precisely
\begin{theorem}(\cite{IoKo12}) Let $\Omega$ be a bounded Borel subset of $\br$ with Lebesgue measure $|\Omega|=1$. If $\Omega$ is spectral with spectrum $\Lambda$ then $\Lambda$ is periodic, i.e., there exists $p>0$ such that $\Lambda+p=\Lambda$; moreover the period $p$ is an integer. 
\end{theorem}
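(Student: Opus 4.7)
The plan is to translate the spectrality conditions into Fourier-analytic identities and analyze them as constraints on the tempered distribution $\mu_\Lambda := \sum_{\lambda\in\Lambda}\delta_\lambda$. Let $h = \chi_\Omega * \widetilde{\chi_\Omega}$, where $\widetilde{f}(x) = \overline{f(-x)}$. Then $h$ is a continuous, real, non-negative, even function, supported in $[-D,D]$ where $D$ is the diameter of $\Omega$, with $h(0) = |\Omega| = 1$ and $\widehat{h} = |\widehat{\chi_\Omega}|^2$. Orthogonality of $E(\Lambda)$ in $L^2(\Omega)$ is equivalent to $\widehat{h}(\lambda-\lambda') = 0$ for distinct $\lambda,\lambda'\in\Lambda$, while completeness, via Parseval applied to the test exponentials $e_\xi$, is equivalent to the tiling identity $\sum_{\lambda\in\Lambda}\widehat{h}(\xi-\lambda) \equiv 1$.

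As a preliminary step I would show that $\Lambda$ is uniformly discrete and has Beurling density exactly $1$. Uniform discreteness follows from continuity of $\widehat{h}$ together with $\widehat{h}(0)=1$: any two points of $\Lambda$ within a sufficiently small neighborhood of each other would contradict the orthogonality. Density $1$ follows from Landau's classical density theorem, or directly from Parseval applied to the indicators of long intervals (which compares $\sum_\lambda |\widehat{\chi_\Omega}(\xi-\lambda)|^2 = 1$ integrated against a bump to the cardinality of $\Lambda$ in that window).

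Next, I would exploit the tiling identity rewritten as $\widehat{h} * \mu_\Lambda \equiv 1$. Taking its distributional Fourier transform, and using that $h$ is even so $\widehat{\widehat{h}}=h$, yields $h \cdot \widehat{\mu_\Lambda} = \delta_0$. Since $h$ is continuous with $h(0)=1$, division by $h$ is legitimate near $0$, so $\widehat{\mu_\Lambda}$ coincides with $\delta_0$ on the open neighborhood of $0$ where $h>0$; equivalently, $\widehat{\mu_\Lambda} - \delta_0$ is supported away from the origin. Thus $\widehat{\mu_\Lambda}$ has a \emph{spectral gap} at $0$, and the relation $h\cdot \widehat{\mu_\Lambda}=\delta_0$ on all of $(-D,D)$ further confines the support of $\widehat{\mu_\Lambda}-\delta_0$ to the compact set $\{h=0\}\cap[-D,D]$ together with $\{|\xi|\ge D\}$.

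The main obstacle, and the heart of the argument, is to upgrade this local spectral gap, together with the uniform discreteness of $\Lambda$, the density-$1$ condition, and the compact support of $h$, into the conclusion that $\Lambda$ is periodic. I would invoke a Meyer-type structure theorem for one-dimensional Fourier quasicrystals: a uniformly discrete positive atomic tempered measure whose Fourier transform has a gap at $0$ and whose singular support near $0$ is a locally finite set is supported on a finite union of arithmetic progressions with a common difference $p$, so that $\Lambda+p=\Lambda$. Once periodicity is in hand, the integrality follows cleanly: $\Lambda$ decomposes as a disjoint union of $k$ arithmetic progressions each of common difference $p$ and density $1/p$, so the density-$1$ condition forces $k/p=1$, i.e., $p=k$ is a positive integer.
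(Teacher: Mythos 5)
The paper does not actually prove this statement; it quotes it from \cite{IoKo12}, so your argument has to be measured against the proof in that reference. Your setup is correct and standard: with $|\Omega|=1$, orthogonality is equivalent to $\widehat{h}(\lambda-\lambda')=0$ for distinct $\lambda,\lambda'$, completeness (Parseval tested on exponentials) to the tiling identity $\sum_\lambda\widehat{h}(\xi-\lambda)\equiv 1$, and from these you correctly get uniform discreteness, density one, and --- modulo the usual care needed to make sense of multiplying the distribution $\widehat{\mu_\Lambda}$ by the merely continuous function $h$ --- the spectral gap $\operatorname{supp}(\widehat{\mu_\Lambda})\subseteq\{0\}\cup\{h=0\}$. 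The reduction of the integrality of the period to the density count at the end is also fine.

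The genuine gap is exactly at what you call the heart of the argument. The ``Meyer-type structure theorem'' you invoke is not an available result under the hypotheses you have verified, and as you state it, it is false: non-periodic positive crystalline measures with uniformly discrete support and unit masses are known to exist, and such examples satisfy everything you list (uniform discreteness, positivity, an atom at $0$ and a gap around it, locally finite spectrum near the origin), so no theorem of that shape can close the proof. Moreover you have verified even less than your own hypotheses require: all you know is that $\operatorname{supp}(\widehat{\mu_\Lambda}-\delta_0)$ lies in $\{h=0\}$, which outside $[-D,D]$ is no constraint at all, while inside $[-D,D]$ the zero set of the merely continuous autocorrelation $h(t)=|\Omega\cap(\Omega+t)|$ can be an arbitrary closed set, not a locally finite one. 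The rigidity results for Fourier quasicrystals (Lev--Olevskii and successors) need $\widehat{\mu_\Lambda}$ to be a measure with uniformly discrete support, which is nowhere in sight here. The proof in \cite{IoKo12} extracts the needed structure from $\Omega$ concretely and differently: since $\Omega$ is bounded, $\widehat{\chi_\Omega}$ extends to a nonzero entire function, so its real zeros in the compact interval between the minimal and maximal gap of $\Lambda$ form a finite set; hence the consecutive differences of $\Lambda$ take only finitely many values, a pigeonhole argument produces $\lambda\neq\mu$ in $\Lambda$ for which $\Lambda-\lambda$ and $\Lambda-\mu$ agree on an arbitrarily long window, and a rigidity argument based on the tiling identity upgrades agreement on a long window to $\Lambda+(\mu-\lambda)=\Lambda$. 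That chain --- finiteness of the gap alphabet via analyticity, pigeonhole, and rigidity --- is the missing content, and the structure theorem you appeal to cannot substitute for it.
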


\begin{definition}\label{def0.7}
Let $p\in\bn$, we say that {\it spectral implies tile for period $p$} if every bounded Borel subset $\Omega$ of $\br$, with Lebesgue measure $|\Omega|=1$ and which has a spectrum $\Lambda$ of period $p$, is also a tile. 
\end{definition}

In the original paper \cite{Fug74}, Fuglede proved that his conjecture is true in the case when the spectrum or the tiling set is a lattice. This corresponds to the case of period equal to 1. We prove that spectral implies tile for periods 2,3,4,5.

\begin{theorem}\label{th0.9}
Spectral implies tile for period $2$, $3$, $4$, or $5$. 
\end{theorem}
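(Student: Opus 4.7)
The plan is to reduce the hypothesis to the existence of a Hadamard pair of small size and then invoke Theorem \ref{th0.5a}.

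By the Iosevich-Kolountzakis theorem cited just above, $p\in\bn$ and, after translating $\Omega$ so that $0\in\Lambda$, we may write $\Lambda = A + p\bz$ with $A := \Lambda\cap[0,p)$. Density $|\Omega|=1$ forces $|A|=p$. The orthogonality relations $\hat{\chi}_\Omega(pk)=0$ for $k\neq 0$ become, via Poisson summation, $\sum_{n\in\bz}\chi_\Omega(x+n/p)=p$ a.e., so $\Omega$ $p$-tiles $\br$ by $\tfrac{1}{p}\bz$. Decompose $\Omega = \bigsqcup_{n\in\bz}(E_n+n/p)$ with $E_n\subset[0,1/p)$; Fourier inversion on $[0,1/p)$ turns the remaining orthogonality conditions $\hat{\chi}_\Omega(a+pk)=0$ for $a\in A\setminus\{0\}$, $k\in\bz$, into the pointwise identities
\[
\sum_{n\in\bz} e^{-2\pi i a n/p}\chi_{E_n}(y) = 0 \quad \text{for a.e.\ } y\in[0,1/p).
\]
Together with $\sum_n\chi_{E_n}(y)=p$, these assert that for a.e.\ $y$ the set $N(y):=\{n:y\in E_n\}$ has $p$ elements and $\tfrac{1}{\sqrt p}(e^{-2\pi i a n/p})_{a\in A,\,n\in N(y)}$ is unitary.

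The heart of the proof is the structural claim that $y\mapsto N(y)$ is essentially constant, i.e.\ $\Omega = \tfrac{1}{p}B + [0,\tfrac{1}{p})$ for a single $B\subset\bz$ with $|B|=p$, and moreover $A\subset\bq$. Orthogonality alone does not force this, so the completeness half of the spectrum hypothesis (Parseval's identity $\sum_{\lambda\in\Lambda}|\hat{\chi}_\Omega(\xi-\lambda)|^2=1$) must be used: rewriting Parseval over the cosets $A+p\bz$ yields a fiberwise $\ell^2$-identity for the vectors $(\chi_{E_n}(y))_n$ which, combined with the Hadamard structure above, forces $N(y)$ to be a single integer set $B$. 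Rationality of $A$ then follows because the entries of $U:=\tfrac{1}{\sqrt p}(e^{-2\pi i a b/p})_{a\in A,\,b\in B}$ are roots of the integer polynomial $B(z)=\sum_{b\in B}z^b$, and combined with the classification of Hadamard matrices of size at most $5$ (Theorem \ref{th1.15} and Theorem \ref{standard}) those roots must themselves be roots of unity. I expect this structural step to be the main obstacle.

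With the structure in hand, let $q$ be a common denominator for $A$ and set $L:=qA\subset\bz$, $R:=pq$; the unitarity of $U$ is precisely the Hadamard-pair property for $(B,L)$ of integers with scaling factor $R$. Using $0\in B$ and the cycle recurrence $Rx_k = x_{k-1}+l_{k-1}$ with $l_{k-1}\in L\subset\bz$, a direct induction shows that the extreme cycles of $(B,L)$ lie in $\bz$. Since $|B|=p\in\{2,3,4,5\}$, Theorem \ref{th0.5a} supplies a complementary Hadamard pair $(B',L')$, and by Definition \ref{def2.1}, $B\oplus B'$ is a complete residue system modulo $R$. This is equivalent to $B\oplus(B'+R\bz)=\bz$, so $B$ tiles $\bz$; scaling by $1/p$ gives $\Omega \oplus \tfrac{1}{p}(B'+R\bz)=\br$, which shows that $\Omega$ is a tile.
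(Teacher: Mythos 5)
Your reduction to the fiberwise picture is correct up to the point where you form the sets $N(y)=\{n: y\in E_n\}$ and observe that $\frac{1}{\sqrt p}\bigl(e^{-2\pi i a n/p}\bigr)_{a\in A,\,n\in N(y)}$ is unitary for a.e.\ $y$. The gap is the step you yourself flag as the heart of the argument: the claim that completeness forces $y\mapsto N(y)$ to be essentially constant, so that $\Omega=\frac1pB+[0,\frac1p)$ for a single $B$. This is false, and Parseval does not rescue it. For $p=2$ take $\Omega=[0,\tfrac14)\cup[\tfrac34,\tfrac54)\cup[\tfrac74,2)$ with $\Lambda=\{0,\tfrac12\}+2\bz$: here $N(y)=\{0,2\}$ on $[0,\tfrac14)$ and $N(y)=\{1,3\}$ on $[\tfrac14,\tfrac12)$, and both fibers give unitary $2\times2$ matrices against $A=\{0,\tfrac12\}$; completeness holds because for each fixed $y$ the orthogonality relations $\langle f,e_{a+2k}\rangle=0$ amount to an invertible $p\times p$ linear system in the fiber values of $f$, forcing $f=0$. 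So the fiber genuinely varies, and your final step $\Omega\oplus\frac1p(B'+R\bz)=\br$ has nothing to apply to, since there is no single $B$.

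This is precisely the difficulty the paper's Lemma \ref{lem0.8} is designed to handle: instead of constancy of the fiber, one needs a \emph{universal} tiling set $\mathcal T\subset\bz$, depending only on $\Gamma$ (your $A$), that tiles $\bz$ simultaneously with \emph{every} admissible fiber set $A'$ (every integer set for which $\frac1pA'$ is a spectrum of $\Gamma$); then $\Omega$ tiles by $\frac1p\mathcal T$ even though $N(y)$ jumps between such sets. The paper's proof of Theorem \ref{th0.9} then does a case analysis for $p=2,3,4,5$ using the classification results (Corollary \ref{pr0.1}, Theorem \ref{thha4}) to show that the structural parameters of any admissible fiber (the exponent $a$, and also $m$ when $p=4$) are determined by $\Gamma$ alone, after which an explicit $\mathcal T$ depending only on those parameters is written down. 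In my example above, $\{0,2\}=2\{0,1\}$ and $\{1,3\}=1+2\{0,1\}$ both tile by $\mathcal T=\{0,1\}\oplus4\bz$, illustrating the mechanism. The remaining ingredients of your plan (rationality of $A$ via the Hadamard classification, and extracting a tiling set for a fiber from a complementary Hadamard pair) are sound in spirit, but to close the argument you must show the tiling set produced does not depend on which admissible fiber you feed in --- only on $\Gamma$ --- and that is the content you are currently missing.
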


We end the paper with some examples to illustrate our results. Example \ref{ex4.1} shows that in the well known Jorgensen Pedersen example, of a scale 4 Cantor set, the spectrum $\Lambda$ described in \eqref{eqspmu4} tiles $\bz$ with translations and the tiling set is the spectrum of a complementary fractal measure.

\section{Proofs and other results}

\subsection{Spectra of fractals and base $R$ representations of integers}

\begin{proposition}\label{pr1.2}
Let $d$ be the greatest common divisor of the points in $B$. Let $M=\max\{ l: l\in L\}$, $m=\min\{ l: l\in L\}$. Then for every extreme cycle point $x$ for $(B,L)$ we have $x\in\frac1d\bz$ and $\frac m{R-1}\leq x\leq \frac{M}{R-1}$. 
\end{proposition}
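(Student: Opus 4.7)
The proof splits into two essentially independent parts: the denominator bound $x\in\frac1d\bz$, which uses only $|m_B(x)|=1$, and the magnitude bounds, which use only the cycle recursion.

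\textbf{Denominator bound.} Fix an extreme cycle point $x=x_k$. Since $|m_B(x)|=1$, the sum $\sum_{b\in B}e^{2\pi i b x}$ has modulus $N=\#B$, and since each summand has modulus $1$, the triangle-equality case forces all summands to be equal. Using that $0\in B$, the summand for $b=0$ equals $1$, so $e^{2\pi i b x}=1$, i.e. $bx\in\bz$, for every $b\in B$. By Bézout there exist integers $(n_b)_{b\in B}$ with $\sum_{b\in B} n_b b = d$, and then $dx=\sum_b n_b(bx)\in\bz$, giving $x\in\frac1d\bz$.

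\textbf{Magnitude bounds.} Label the cycle so that the chosen point is $x_0$, with digits $l_0,\dots,l_{r-1}\in L$ satisfying $Rx_{j+1}=x_j+l_j$ (indices mod $r$). I would iterate this relation starting from $x_0=\frac{x_{r-1}+l_{r-1}}{R}$ and substituting repeatedly to get
\[
x_0 \;=\; \frac{1}{R^r}\,x_0 \;+\; \sum_{j=0}^{r-1}\frac{l_{r-1-j}}{R^{j+1}},
\]
which after reindexing $k=r-1-j$ and clearing denominators gives the closed form
\[
x_0 \;=\; \frac{1}{R^r-1}\sum_{k=0}^{r-1} l_k R^k.
\]
Since every $l_k\in L$ satisfies $m\le l_k\le M$, and $\sum_{k=0}^{r-1}R^k=\frac{R^r-1}{R-1}$, the two-sided estimate
\[
\frac{m}{R-1} \;\le\; x_0 \;\le\; \frac{M}{R-1}
\]
follows at once. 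Because the labeling was arbitrary, the same bounds hold for every $x_k$ in the cycle.

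\textbf{Anticipated obstacle.} There is no real obstacle; the only care needed is bookkeeping in the telescoping substitution to arrive at the geometric-series formula for $x_0$. A quick sanity check in the cases $r=1$ (yielding $x_0=\frac{l_0}{R-1}$) and $r=2$ (yielding $x_0=\frac{l_0+Rl_1}{R^2-1}$) confirms the formula, after which both claimed conclusions are immediate from the two observations above.
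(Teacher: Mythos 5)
Your proof is correct and follows essentially the same route as the paper's: triangle-equality in $|m_B(x)|=1$ forces $e^{2\pi i bx}=1$ for all $b\in B$ (hence $dx\in\bz$, where you make the Bézout step explicit that the paper leaves implicit), and iterating the cycle recursion yields the same closed form $x_0=\frac{1}{R^r-1}\sum_{k=0}^{r-1}l_kR^k$, from which both bounds follow by the geometric series.
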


\begin{proof}
Since $|m_B(x)|=1$ and $0\in B$, using the triangle inequality we obtain that $e^{2\pi i bx}=1$ for all $b\in B$. Therefore $bx\in\bz$ for all $b\in B$. 
This implies that $dx\in\bz$ so $x\in\frac1d\bz$. 

Let $x=x_0, x_1,\dots, x_{r-1}$ be a cycle for $L$, with digits $l_0,\dots, l_{r-1}$. Then we have 
$$\frac{x_0+l_0+Rl_1+\dots+R^{r-1}l_{r-1}}{R^r}=x_0\mbox{ so }x_0=\frac{l_0+Rl_1+\dots+R^{r-1}l_{r-1}}{R^r-1}$$
which implies
$$x_0\leq \frac{M\frac{R^{r}-1}{R-1}}{R^r-1}=\frac{M}{R-1},$$
and similarly for the lower bound. 
\end{proof}

\begin{proposition}\label{pr1.3}
Let $L$ be a complete set of representatives $\mod R$. Then every integer $x$ has a unique representation in base $R$ using digits in $L$. Moreover any such representation $l_0l_1\dots$ is eventually periodic, i.e., there exists $n_0\geq0$ and $r\geq 1$ such that $l_{n+r}=l_n$ for all $n\geq n_0$.
\end{proposition}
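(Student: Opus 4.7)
The plan is to treat the statement in two parts: existence and uniqueness of the digit expansion, and then eventual periodicity via a finiteness argument on the tail of the integer sequence $(x_k)$.

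For existence and uniqueness, I would proceed inductively on the index $k$. Since $L$ is a complete set of representatives modulo $R$, for any integer $y$ there is a unique $l \in L$ with $y \equiv l \pmod{R}$, and then $(y-l)/R$ is a uniquely determined integer. Starting from $x_0 = x$ and applying this observation at each step determines $l_k \in L$ and $x_{k+1} := (x_k - l_k)/R \in \bz$ uniquely. This gives a unique sequence $l_0 l_1 \dots$ satisfying the relation in Definition \ref{def1.2}.

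For eventual periodicity, I would show that the integer sequence $(x_k)$ eventually lands in, and stays in, a bounded set. Let $L_{\max} = \max\{\abs{l} : l \in L\}$. From $x_{k+1} = (x_k - l_k)/R$ we get
\[
\abs{x_{k+1}} \leq \frac{\abs{x_k} + L_{\max}}{R},
\]
and a direct calculation shows this quantity is strictly smaller than $\abs{x_k}$ whenever $\abs{x_k} > L_{\max}/(R-1)$, while the interval $[-L_{\max}/(R-1), L_{\max}/(R-1)]$ is forward-invariant under the recursion. Since $(\abs{x_k})$ is a sequence of nonnegative integers that strictly decreases as long as it stays above the threshold $L_{\max}/(R-1)$, after finitely many steps some $x_{k_0}$ satisfies $\abs{x_{k_0}} \leq L_{\max}/(R-1)$, and then $\abs{x_k} \leq L_{\max}/(R-1)$ for every $k \geq k_0$.

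Once confined to this bounded interval the sequence $(x_k)_{k \geq k_0}$ takes only finitely many integer values, so by the pigeonhole principle there exist $n_0 \geq k_0$ and $r \geq 1$ with $x_{n_0} = x_{n_0 + r}$. Invoking the uniqueness from the first part of the argument, the digits generated from $x_{n_0}$ and from $x_{n_0+r}$ must agree, and by induction $l_{n_0 + k} = l_{n_0 + r + k}$ for all $k \geq 0$. This is precisely the eventual periodicity statement. The only mild subtlety, and perhaps the one place to be careful, is verifying the strict inequality $\abs{x_{k+1}} < \abs{x_k}$ above the threshold, which makes the descent argument work in integers; everything else is routine.
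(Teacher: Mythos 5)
Your proof is correct and follows essentially the same strategy as the paper's: unique digit extraction from the complete residue system, a bound forcing the integer sequence $(x_k)$ into a finite set, pigeonhole, and uniqueness to propagate the repetition of digits. The only cosmetic difference is that you establish the boundedness of the tail by a one-step descent/invariance argument for the recursion, whereas the paper writes $x_n$ in closed form as $\bigl(x_0-l_0-Rl_1-\dots-R^{n-1}l_{n-1}\bigr)/R^n$ and bounds that expression directly; both yield the same bound of order $1+M/(R-1)$.
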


\begin{proof}
Let $x\in\bz$ and $x_0=x$. Since $L$ is a complete set of representatives $\mod R$, there is a unique $l_0\in L$ and some $x_1\in\bz$ such that $x_0=Rx_1+l_0$. By induction, we obtain the sequence $\{x_n\}$ and $\{l_n\}$ in a unique way. We have to show that the sequence $\{l_n\}$ is eventually periodic. 
Let $M=\max\{|l| : l\in L\}$. By induction we can show that, for all $n\in\bn$,
$$x_n=\frac{x_0-l_0-Rl_1-\dots-R^{n-1}l_{n-1}}{R^n}.$$
This implies that for $n$ large enough such that $|x_0/R^n|\leq 1$ we have 
$$| x_n|\leq 1+M\left(\frac{1}R+\dots+\frac{1}{R^{n-1}}\right)\leq1+ \frac M{R-1}.$$
So $x_n$ lies in a compact interval from some point on. But $x_n$ is also an integer so the numbers $x_n$ take finitely many values. Therefore there exists $n_0\geq 0$ and $r\geq 1$ such that $x_{n_0}=x_{n_0+r}$. This implies that $l_{n_0}=l_{n_0+r}$ and $x_{n_0+1}=x_{n_0+r+1}$. By induction $l_{n}=l_{n+r}$ for all $n\geq  n_0$.  
\end{proof}

\begin{definition}\label{def1.4}
If $L$ is a complete set of representatives $\mod R$, we write $x=l_0l_1\dots$ if $l_0l_1\dots$ is the base $R$ representation of $x$ using digits in $L$. For $l_0,\dots, l_{r-1}$ in $L$ we denote by $\uln{l_0\dots l_{r-1}}$ the periodic sequence $l_0\dots l_{r-1}l_0\dots l_{r-1}\dots$. If $x=\uln{l_0\dots l_{r-1}}$ for some $l_0,\dots,l_{r-1}\in L$, we say that $x$ has a {\it periodic} representation in base $R$ using digits in $L$.

We say that $l_0\dots l_{r-1}$ is a {\it cycle} for $L$ if there exists an integer that has base $R$ representation equal to $\uln{l_0\dots l_{r-1}}$.
\end{definition}

\begin{proposition}\label{pr1.5}
If $\{x_0,\dots,x_{r-1}\}$ is a cycle for $L$ with digits $l_0,\dots, l_{r-1}$, and if $x_0\in\bz$ then $x_1,\dots, x_{r-1}\in\bz$ and the points $-x_0,\dots,-x_{r-1}$ have periodic expansions in base $R$ using digits in $L$:
\begin{equation}
-x_0=\uln{l_0\dots l_{r-1}},\quad -x_1=\uln{l_1\dots l_{r-1}l_0},\quad\dots\quad, -x_{r-1}=\uln{l_{r-1}l_0\dots l_{r-2}}.
\label{eq1.5.1}
\end{equation}
Conversely, if $-x_0\in\bz$ has a periodic expansion in base $R$ using digits in $L$ , $-x_0=\uln{l_0\dots l_{r-1}}$, and we define 
$$x_1=-\,\uln{l_1\dots l_{r-1}l_0},\quad\dots\quad, x_{r-1}=-\,\uln{l_{r-1}l_0\dots l_{r-2}},$$
then $\{x_0,\dots, x_{r-1}\}$ is a cycle for $L$ contained in $\bz$. 

\end{proposition}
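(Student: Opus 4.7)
For the forward direction, I would exploit the cyclic structure of the defining relations $R x_{k+1}=x_k+l_k$. Rewriting these as $x_{k-1}=R x_k - l_{k-1}$ and using the closing relation $R x_0 = x_{r-1}+l_{r-1}$, one immediately gets $x_{r-1}=R x_0-l_{r-1}\in\bz$ (since $l_{r-1}\in L\subset\bz$). Then backward induction along the cycle gives $x_{r-2}=Rx_{r-1}-l_{r-2}\in\bz$, and so on, showing $x_1,\dots,x_{r-1}\in\bz$.

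To produce the periodic expansions, I would simply reinterpret the cycle relation algebraically. From $Rx_{k+1}=x_k+l_k$ we get $-x_k=R(-x_{k+1})+l_k$, which is exactly the recursion defining the base-$R$ representation of $-x_k$ with initial digit $l_k$ (in the sense of Definition \ref{def1.2}). Iterating cyclically, the sequence of digits produced for $-x_0$ is $l_0,l_1,\dots,l_{r-1},l_0,l_1,\dots=\uln{l_0\dots l_{r-1}}$, and the analogous shifted sequences give the claimed representations of $-x_1,\dots,-x_{r-1}$.

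For the converse I would argue as follows. Suppose $-x_0\in\bz$ and $-x_0=\uln{l_0\dots l_{r-1}}$, so there exist integers $y_0=-x_0,y_1,y_2,\dots$ with $y_n=Ry_{n+1}+l_{n\bmod r}$. The key observation is that the shifted sequence $y'_n:=y_{n+r}$ satisfies the same recursion with the same (periodic) digits, so subtracting gives $y'_n-y_n=(y'_0-y_0)/R^n$. Since both $y_n$ and $y'_n$ are integers, $R^n$ must divide $y'_0-y_0=y_r-y_0$ for every $n\geq0$, which forces $y_r=y_0$. Defining $x_k:=-y_k$ for $k=0,\dots,r-1$, the recursion becomes $Rx_{k+1}=x_k+l_k$, the equality $y_r=y_0$ yields $(x_{r-1}+l_{r-1})/R=x_0$, and the expansion of $-x_k=y_k$ read off from position $k$ is precisely $\uln{l_k\dots l_{r-1}l_0\dots l_{k-1}}$, matching the definition of $x_k$ in the statement.

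The one nontrivial step is the periodicity $y_r=y_0$ in the converse direction: it relies on the fact that the ``remainder'' sequence in a periodic base-$R$ expansion of an integer must itself be periodic, which I would establish by the $R^n$-divisibility argument above. Everything else is a direct translation between the cycle recursion and the base-$R$ recursion, so no additional estimates (of the sort used in Proposition \ref{pr1.3}) are needed.
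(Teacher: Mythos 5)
Your proof is correct and follows essentially the same route as the paper's: both directions are handled by translating the cycle recursion $Rx_{k+1}=x_k+l_k$ into the base-$R$ recursion $-x_k=R(-x_{k+1})+l_k$ and inducting around the cycle. The only difference is that you make explicit, via the $R^n$-divisibility argument, the periodicity $y_r=y_0$ of the remainder sequence in the converse (equivalently, that a periodic digit string determines a unique integer), a point the paper leaves implicit in its ``the rest follows by induction''; this is a worthwhile clarification but not a different approach.
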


\begin{proof}
If $\{x_0,x_1\dots,x_{r-1}\}$ is a cycle for $L$ with digits $l_0,\dots, l_{r-1}$ then $-x_{r-1}=R(-x_0)+l_{r-1}$ so $x_{r-1}\in\bz$. By induction, all points in this cycle are in $\bz$. We have also $-x_0=R(-x_1)+l_0$, $-x_1=R(-x_2)+l_1,\dots$. This shows that $-x_0=\uln{l_0\dots,l_{r-1}}$, $-x_1=\uln{l_1\dots l_{r-1}l_0}$, etc.

For the converse, if $-x_0=\uln{l_0\dots l_{r-1}}$ then $-x_0=R(-x_1)+l_0$ and the number $-x_1$ will have the representation $\uln{l_1\dots l_{r-1}l_0}$. 
This implies also $(x_0+l_0)/R=x_1$. The rest follows by induction.
\end{proof}


\begin{myproof}[Proof of Proposition \ref{pr0.1.6}]
First, note that the points in $L$ are incongruent $\mod R$. Indeed if $l-l'=Rk$ for some $k\in\bz$, then from the unitarity of the matrix in Definition \ref{def1.1i} we have
$$0=\frac1N\sum_{b\in B}e^{2\pi i R^{-1}b\cdot(l-l')}=\frac{1}{N}\sum_{b\in B}e^{2\pi i b\cdot k}=1,$$
a contradiction.
From Proposition \ref{pr1.5} we see that for any extreme cycle point $x$, the point $-x$ has a periodic representation using digits in $L$. Also, if $x$ is an integer that has a periodic representation using digits in $L$, then $-x$ is an cycle point in $\bz$. Since $-x$ is in $\bz$ it follows that $m_B(-x)=1$ so $-x$ is an extreme cycle point for $(B,L)$. 

This implies that the set $\Lambda'$ of integers that can be represented in base $R$ using digits in $L$, contains $-C$ for all extreme cycles $C$. We show that $R\Lambda'+L\subset \Lambda'$. 
If $x\in\Lambda'$, $x=l_0l_1\dots$ then $Rx+l_{-1}=l_{-1}l_0l_1\dots$ so $Rx+l_{-1}\in \Lambda'$ for any $l_{-1}\in L$. 

The minimality of $\Lambda(L)$ implies that $\Lambda(L)\subset\Lambda'$. To obtain the converse inclusion, take 
$x\in\Lambda'$. With Proposition \ref{pr1.3}, $x$ has an eventually periodic expansion 
$x=k_0\dots k_{n-1}\uln{l_0\dots l_{r-1}}$. If $c=\uln{l_0\dots l_{r-1}}$ then $x=k_0+R k_1+\dots+R^{n-1} k_{n-1}+R^n c$. We have that $-c$ is an extreme cycle point so $c$ is in $\Lambda(L)$. By the invariance of $\Lambda(L)$ we get that $x\in\Lambda(L)$. So $\Lambda'\subset\Lambda$.  
\end{myproof}

%

\subsection{The Coven-Meyerowitz property and complementary Hadamard pairs}

\begin{myproof}[Proof of Theorem \ref{th0.2.1.2}]
The hard part of this theorem was covered in \cite[Theorem A]{CoMe99} where the tiling property is proved, i.e., the existence of the set $B'$ such that $B\oplus B'=\bz_R$, and in \cite[Theorem 1.5]{Lab02} where it is shown the spectral property, i.e., the existence of the set $L$. We will include parts of their proofs here to be able to get some more information.

The set $B'$ is defined as follows: first, define the polynomial $B'(x)=\prod \Phi_s(x^{t(s)})$ where the product is take over all the prime power factors of $R$ which are not in $S_A$ and $t(s)$ is the largest factor of $R$ which is prime to $s$. It is shown in \cite{CoMe99} that this polynomial has coefficients $0$ or $1$, therefore it corresponds to a set $B'$, and $B\oplus B'=\bz_R$ (addition modulo $R$). 

Take a number $s$ that appears in the product that defines $B'(x)$. Since $s$ is a prime power, say $s=p^\alpha$, the cyclotomic polynomial is of the form $\Phi_s(x)=1+x^{p^{\alpha-1}}+x^{2p^{\alpha-1}}+\dots+x^{(p-1)p^{\alpha-1}}$ (see e.g. \cite[Lemma 1.1]{CoMe99}). Hence
$$\Phi_s(x^{t(s)})=1+x^{p^{\alpha-1}t(s)}+x^{2p^{\alpha-1}t(s)}+\dots+x^{(p-1)p^{\alpha-1}t(s)}.$$

 So all the coefficients are nonnegative for all the factors that appear in this product. Therefore, there are no cancelations. This implies that $x^{p^{\alpha-1}t(s)}$ appears with a positive coefficient in $B'(x)$. So $p^{\alpha-1}t(s)$ is in $B'$. The greatest common divisor of the elements in $B'$ must divide $p^{\alpha-1}t(s)$ which divides $st(s)$, and by the definition of $t(s)$ this will divide $R$. 
 
 Therefore we have that $\gcd(B')$ divides $R$. We will use this property to show that all the extreme cycles for the Hadamard pair $(B',L')$ are in $\bz$.

 Since $B\oplus B'=\bz_R$, we have by \cite[Lemma1.3]{CoMe99} that for any prime power $s$ that divides $R$, the cyclotomic polynomial $\Phi_s(x)$ divides $B(x)$ or $B'(x)$. Then, with \cite[Lemma 2.1]{CoMe99}, we obtain that $S_B$ and $S_{B'}$ are disjoint sets whose union is the set of all prime power factors of $R$, and also 
 $B'(1)=\prod_{s\in S_{B'}}\Phi_s(1)$, so the (T1) property is satsisfied by $B'$.
 
 To see that the (T2) property is satsisfied by $B'$ we follow again the proof of \cite[Theorem A]{CoMe99}: it is shown there that if $s=s_1\dots s_m$ is a product of distinct prime power factors of $R$ and $s_i$ is not in $S_B$, then $\Phi_s(x)$ divides $\Phi_{s_i}(t(x))$ (\cite[Lemma 1.1.(6)]{CoMe99}) so it divides $B'(x)$. So, if all $s_1,\dots,s_m$ are in $S_{B'}$, then they will not be in $S_B$ so $\Phi_s(x)$ will divide $B'(x)$, which proves (T2). Hence $B'$ has the CM-property.
 
 Since $\gcd(B)=1$, we have also $\gcd(B\oplus B')=1$.

 Now we take care of the spectral part. We use the proof of \cite[Theorem 1.5]{Lab02}. The set $L$ will contain all sums of the form $R\cdot \sum_{s\in S_B}\frac{k_s}{s}$ where $s\in S_B$, $s=p^\alpha$ for some $\alpha>0$, $p$ prime and $k_s\in\{0,\dots,p-1\}$. Since $B$ satsifies the CM-property, it is shown in \cite{Lab02} that $(B,L)$ is a Hadamard pair. Obviously $L$ is a subset of $\bz$, since the elements of $S_B$ divide $R$. Similarly we can construct $L'$ for $B'$, since we showed that $B'$ has the CM-property. 
 
 Next we show that $L\oplus L'=\bz_R$. We have $|L|\cdot |L'|=|B|\cdot|B'|=R$. 
 We prove that $(L-L)\cap (L'-L')=\{0\}$ (in $\bz_R$). Suppose we have 
 \begin{equation}\label{eqdisj2.1}
R\cdot\sum_{s\in S_B}\frac{k_s-l_s}{s}=R\cdot \sum_{s\in S_{B'}}\frac{k_s-l_s}{s}\quad\mod R,
\end{equation}
where $k_s$, $l_s$ for $s$ in either $S_B$ or $S_{B'}$ are as above. We proved above that $S_B$ and $S_{B'}$ are disjoint and their union consists of all prime power factors of $R$. 
 
  Take some prime $p$ that divides $R$ and let $s=p^\alpha$ be the largest power that divides $R$. Then $s$ appears in one of the sums in \eqref{eqdisj2.1}, and $R\cdot\frac{k_s-l_s}{s}$ is not divisible by $p$ unless $k_s=l_s$. For all the other elements $s'\in S_B\cup S_{B'}$ the numbers $R\cdot\frac{k_{s'}-l_{s'}}{s'}$ are divisible by $p$. Therefore, the equality \eqref{eqdisj2.1} implies that $k_s=l_s$. By induction we assume that $k_s=l_s$ for all $s$ in $S_B$ or $S_{B'}$ of the form $s=p^\beta$ with $1\leq \gamma\leq \beta\leq\alpha$. Then consider $s=p^{\gamma-1}$, which is in either $S_B$ or $S_{B'}$. Then $R\frac{k_s-l_s}{s}$ is not divisible by $p^{\alpha-\gamma+2}$ unless $k_s=l_s$ and for the other $s'\in S_B\cup S_{B'}$ for which $k_s\neq l_s$, the number $R\frac{k_{s'}-l_{s'}}{s'}$ is divisible by $p^{\alpha-\gamma+2}$. Using equation \eqref{eqdisj2.1} we obtain that $k_s=l_s$. Therefore for all the powers $s$ of $p$ that appear in either $S_B$ or $S_{B'}$ we have $k_s=l_s$. Since the prime $p$ was an arbitrary prime factor of $R$, we get that $k_s=l_s$ for all $s\in S_B\cup S_{B'}$. Hence $(L-L)\cap(L'-L')=\{0\}$ in $\bz_R$. This means that the map from $L\times L'$ to $\bz_R$, $(l,l')\mapsto l+l'\mod R$ is injective. But since $|L\times L'|=R$ the map will be also surjective so $L\oplus L'=\bz_R$.
  
  It remains to deal with the extreme cycles. By Proposition \ref{pr1.2}, since $\gcd(B)=1$, we have that the extreme cycles for $(B,L)$ are in $\bz$. 
  We also proved above that $d':=\gcd(B')$ divides $R$. By Proposition \ref{pr1.2} any extreme cycle for $(B',L')$ is contained in $\frac1{d'}\bz$. Take the first two points in such a cycle $x_0=\frac{k_0}{d'}$, $x_1=\frac{k_1}{d'}$, and for some $l_0'\in L'$:
  $$\frac{\frac{k_0}{d'}+l_0'}{R}=\frac{k_1}{d'}.$$
  Then $$\frac{k_0}{d'}+l_0'=R\cdot\frac{k_1}{d'}.$$
  But since $R$ is divisble by $d'$, it follows that $R\cdot\frac{k_1}{d'}$ is in $\bz$; also $l_0'$ is in $\bz$, so $x_0=\frac{k_0}{d'}$ is in $\bz$. Thus all extreme cycles for $(B',L')$ are contained in $\bz$.

\end{myproof}
%
%

\begin{myproof}[Proof of Theorem \ref{th2.3}]
Since $L\oplus L'$ and $B\oplus B'$ are complete sets of representatives $\mod R$, they form a Hadamard pair. With Proposition \ref{pr1.2}, we have that all extreme cycles for $(B\oplus B', L\oplus L')$ are contained in $\bz$. With Propositions \ref{pr1.3} and \ref{pr0.1.6} we see that $\bz$ is the spectrum for $\mu_{B\oplus B'}$. Using the results from \cite{DJ11} we obtain that $\mu_{B\oplus B'}$ is the Lebesgue measure on its support $X_{B\oplus B'}$ and $X_{B\oplus B'}$ is translation congruent to $[0,1]$.

For (ii), note that 
$m_{B\oplus B'}=m_B\cdot m_{B'}$. According to \cite{DJ06}, the Fourier transforms of the measures are 
$$\widehat\mu_{B\oplus B'}(x)=\prod_{k=1}^\infty m_{B\oplus B'}(R^{-k}x)$$
and similarly for $\mu_B$ and $\mu_{B'}$ and the products are uniformly and absolutely convergent on compact sets. Therefore $\widehat\mu_{B\oplus B'}=\widehat\mu_B\cdot\widehat\mu_{B'}$ which implies that $\mu_{B\oplus B'}=\mu_B*\mu_{B'}$.

(iii) follows from Proposition \ref{pr0.1.6}.

For (iv), let $\lambda=l_0l_1\dots$, $\lambda'=l_0'l_1'\dots$, $\gamma=k_0k_1\dots$, $\gamma'=k_0'k_1'\dots$ such that $\lambda-\gamma=\lambda'-\gamma'$. Reducing $\mod R$, we obtain $l_0-k_0\equiv l_0'-k_0'\mod R$. This implies $l_0+k_0'\equiv l_0'+k_0\mod R$, but since $L\oplus L'$ is a complete set of representatives $\mod R$, we get $l_0+k_0'=l_0'+k_0$ so $l_0-k_0=l_0'-k_0'$. Since $L$ and $L'$ have disjoint differences, it follows that $l_0=k_0$ and $l_0'=k_0'$. Then, by induction $l_n=k_n$ and $l_n'=k_n'$ for all $n$, so $\lambda=\gamma$ and $\lambda'=\gamma'$.

For (v), take an extreme cycle for $(B\oplus B',L\oplus L')$ with digits $a_0\dots a_{r-1}$. Then $x=\uln{a_0\dots a_{r-1}}$ is a point in $\bz=\Lambda(L)\oplus \Lambda(L')$. Thus $x=l_0l_1\dots + l_0'l_1'\dots$. This implies that $a_0\equiv l_0+l_0'\mod R$. Since $L\oplus L'$ is a complete set of representatives $\mod R$, this means that $a_0=l_0+l_0'$ and $l_0=p(a_0)$, $l_0'=p'(a_0)$. By induction $l_n=p(a_n)$ and $l_n'=p'(a_n)$ for all $n$. So $l_0l_1\dots=p(\uln{a_0\dots a_{r-1}})$ and $l_0'l_1'\dots=p'(\uln{a_0\dots a_{r-1}})$, so $p(a_0\dots a_{r-1})$ contains the digits of an extreme cycle for $(B,L)$ and similarly for $p'$.

For the converse, note that $R(\Lambda(L)\oplus\Lambda(L'))+L\oplus L'\subset \Lambda(L)\oplus \Lambda(L')$. So, by Proposition \ref{pr0.1.6} and Proposition \ref{pr1.5}, it is enough to show that $\Lambda(L)\oplus\Lambda(L')$ contains all points $\uln{a_0\dots a_{r-1}}$ where $a_0\dots a_{r-1}$ are the digits of an extreme cycle for $(B\oplus B',L\oplus L')$. But the hypothesis implies that $p(\uln{a_0\dots a_{r-1}})$ represents a point in $\Lambda(L)$ and $p'(\uln{a_0\dots a_{r-1}})$ represents a point in $\Lambda(L')$. One can easily see that 
$$\uln{a_0\dots a_{r-1}}=p(\uln{a_0\dots a_{r-1}})+p'(\uln{a_0\dots a_{r-1}})$$
because the two sides are congruent $\mod R^n$ for all $n$. This implies that $\uln{a_0\dots a_{r-1}}\in\Lambda(L)\oplus\Lambda(L')$.

\end{myproof}

\begin{proposition}\label{pr3.4}
Let $R$ be an integer $R\geq 2$. Let $B,B'$ finite sets of integers such that $B\oplus B'=\{0,1,\dots,R-1\}$. Then $\mu_B*\mu_{B'}$ is the Lebesgue measure on $[0,1]$. If $\Lambda$ is an orthogonal set for $\mu_B$ and $\Lambda'$ is an orthogonal set for $\mu_{B'}$ then $\Lambda$ and $\Lambda'$ have disjoint differences.
\end{proposition}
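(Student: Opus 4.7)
The plan has two stages: first establish the convolution identity by factoring the mask functions, then use an order-of-zero argument on the Fourier transforms to force disjointness of the difference sets.

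For the first statement, since $B\oplus B'=\{0,1,\dots,R-1\}$ and $\#B\cdot\#B'=R$, expanding the product of trigonometric sums gives
$$m_B(x)\,m_{B'}(x)=\frac{1}{R}\sum_{j=0}^{R-1}e^{2\pi ijx}=m_{B\oplus B'}(x).$$
Replacing $x$ by $R^{-k}x$ and multiplying over $k\geq 1$, the infinite-product formula $\widehat{\mu_B}(x)=\prod_{k=1}^{\infty}m_B(R^{-k}x)$ (and its analogues for $B'$ and $B\oplus B'$, which converge uniformly on compacts as recalled in the proof of Theorem \ref{th2.3}) yields $\widehat{\mu_B}\cdot\widehat{\mu_{B'}}=\widehat{\mu_{B\oplus B'}}$, that is, $\mu_B*\mu_{B'}=\mu_{B\oplus B'}$. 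To identify this limit, observe that the attractor of the IFS with digits $\{0,1,\dots,R-1\}$ and scaling factor $R$ is $[0,1]$, since $\bigcup_{b=0}^{R-1}[b/R,(b+1)/R]=[0,1]$; a direct calculation shows that the normalized Lebesgue measure on $[0,1]$ satisfies the invariance identity \eqref{eq1.1.1}, so uniqueness of the invariant measure (Definition \ref{def1.1}) forces $\mu_{B\oplus B'}$ to be that Lebesgue measure.

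For the second statement, suppose for contradiction that there exist distinct $\lambda,\lambda'\in\Lambda$ and distinct $\gamma,\gamma'\in\Lambda'$ with $t:=\lambda-\lambda'=\gamma-\gamma'\neq 0$. Orthogonality of $E(\Lambda)$ in $L^2(\mu_B)$ and of $E(\Lambda')$ in $L^2(\mu_{B'})$ forces $\widehat{\mu_B}(t)=0$ and $\widehat{\mu_{B'}}(t)=0$. Because $\mu_B$ and $\mu_{B'}$ are compactly supported nonzero Borel measures, their Fourier transforms extend to entire functions on $\bc$ which are not identically zero, so each vanishes at $t$ to some positive integer order. Hence the product $\widehat{\mu_B}\cdot\widehat{\mu_{B'}}$ vanishes at $t$ to order at least $2$. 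But by step one this product equals the Fourier transform of the Lebesgue measure on $[0,1]$,
$$\int_0^1 e^{2\pi ixs}\,ds=\frac{e^{2\pi ix}-1}{2\pi ix},$$
whose zero set is exactly $\bz\setminus\{0\}$ and which has only simple zeros there. This is the required contradiction, so $\Lambda$ and $\Lambda'$ have disjoint differences.

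The main obstacle is the order-of-zero comparison in the last step, which depends on the entire-function extension of the Fourier transforms of compactly supported measures; the convolution identity itself is essentially the same bookkeeping already carried out in the proof of Theorem \ref{th2.3}.
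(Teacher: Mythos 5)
Your proof is correct and follows essentially the same route as the paper: the convolution identity comes from factoring the masks and the infinite-product formula (the paper simply cites its proof of Theorem \ref{th2.3}(i)--(ii) and the identification of the attractor of $\{0,\dots,R-1\}$ as $[0,1]$), and the disjoint-differences claim is obtained by the identical order-of-zero argument on the entire extensions of $\widehat{\mu_B}$ and $\widehat{\mu_{B'}}$ against the simple real zeros of $\widehat{\mu_{B\oplus B'}}(z)=e^{-\pi iz}\frac{\sin\pi z}{\pi z}$. No substantive differences.
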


\begin{proof}
The proof that $\mu_B*\mu_{B'}$ is the Lebesgue measure on $[0,1]$ is the same as the proof of Theorem \ref{th2.3}(i) and (ii), the attractor of the IFS associated to $B\oplus B'=\{0,\dots,R-1\}$ is $[0,1]$. 
Take $\lambda\neq\gamma$ in $\Lambda$ and $\lambda'\neq\gamma'$ in $\Lambda'$ such that $\lambda-\gamma=\lambda'-\gamma'$. Since $e_\lambda$ is orthogonal to $e_\gamma$, we have that $\widehat\mu_B(\lambda-\gamma)=0$. Also $\widehat\mu_{B'}(\lambda'-\gamma')=0$. But $\widehat\mu_B$ and $\widehat\mu_{B'}$ can be extended to entire functions 
$$\widehat\mu_B(z)=\int e^{-2\pi itz}\,\mu_B(t),\quad(z\in\bc)$$
and similarly for $\mu_B'$. Their product is the Fourier transform of the Lebesgue measure on $[0,1]$ which is 
$$\widehat\mu_{B\oplus B'}(z)=e^{-\pi i z}\frac{\sin\pi z}{\pi z}.$$
The zeros of $\widehat\mu_{B\oplus B'}$ on $\br$ have multiplicity one. But the relations above shows that $\lambda-\gamma=\lambda'-\gamma'$ is zero of multiplicity at least 2 for $\widehat\mu_B\cdot\widehat\mu_{B'}=\widehat\mu_{B\oplus B'}$. This contradiction proves the conclusion.
\end{proof}

\subsection{Finite sets of size 2,3,4,5}

\begin{remark}\label{remf1}
We will often ignore the multiplicative constant $\frac{1}{\sqrt{N}}$ for Hadamard matrices. So, when we say that some number $z$ with $|z|=1$ is an entry in a Hadamard matrix, we actually mean that $\frac{1}{\sqrt{N}}z$ is.

We also note here that many times the study of a spectral set $B$ in $\bz$ with spectrum $\Lambda$ in $\br$ can be reduced to the study of Hadamard pairs, so we can assume in addition that $\Lambda$ has the form $\Lambda=\frac{1}{R}L$ for some $R$ integer and $L$ in $\bz$. First, we examine what happens if $\Lambda$ has only rational numbers.

If $\beta$ is a finite subset of the rational numbers, and $\Lambda$ is a spectrum of rational numbers for $\beta$, then $B,L$ is a Hadamard pair with scaling factor $RQ$, where $R$ is the least common multiple of the denominators of the numbers in $\Lambda$ and $Q$ the least common multiple of the the denominators of the numbers in $\beta$, and $L=R \Lambda$, $B=Q \beta$.

Indeed, the matrix associated with $(\beta,\Lambda)$ is unitary, and therefore so is the matrix associated with $(Q \beta, R \Lambda)$ with scaling factor $QR$.

Now assume $\beta$ is a finite subset of the rational numbers and $\Lambda$ is a spectrum of real numbers for $\beta$. If the unitary matrix associated with $(\beta, \Lambda)$ has at least one column which contains only roots of unity, then $\Lambda$ must contain only rational numbers, because the entries of that column are $e^{2 \pi i b \lambda_j}$ for all $\lambda_j \in \Lambda$. Thus, whenever we know such a thing about the columns of the Hadamard matrices for a certain size $N=\#B$, we know from the above theorem that when considering spectra (for finite sets of integers), it is sufficient to consider Hadamard pairs. For example, this property holds true of $N=2$, $3$, $4$, and $5$. 

For the remainder of the section we assume Hadamard pairs $(B,L)$ are such that $B$ and $L$ each contain $0$ as their first element, and due to the above notions we restrict our attention to Hadamard pairs which are subsets of $\bz$.

It is clear that the first row and first column of a Hadamard matrix associated to such a Hadamard pair must contain only $1$'s (ignoring the multiplicative constant $\frac1{\sqrt N}$). Therefore, when we consider Hadamard matrices in this section, we consider only the ones which are in "de-phased" form, i.e. their first row and column contains only $1$'s. For any Hadamard matrix $H$ there are diagonal matrices $D_1$ and $D_2$ so that $D_1 H D_2$ is de-phased (see e.g. \cite{TaZy06}), so we lose no generality in dealing with matrices in this way. In addition, we only consider one ordering of the rows and the columns of a Hadamard matrix, for changing the ordering of the rows and the columns corresponds to changing the ordering of the elements in $B$ and $L$. Since we know the equivalence classes of the Hadamard matrices for $N=2$, $3$, $4$, and $5$, and we know by Corollary \ref{perm} that everything in those equivalence classes are permutation equivalent, we lose no generality in dealing with Hadamard matrices in this way.

\end{remark}

%

%

\begin{myproof}[Proof of Theorem \ref{standard}]

First, we need some lemmas.

\begin{lemma}
Let $H$, $H'$ be two equivalent Hadamard matrices whose first rows and columns are constant $\frac{1}{\sqrt{N}}$. Then there exist permutations $\pi$, $\psi$ of $\{1,...,N\}$ such that
\beq
{H'}_{j,k} = \frac{H_{\pi (1) \psi (1)} H_{\pi (j) \psi (k)} }{\sqrt{N} H_{\pi (j) \psi (1)} H_{\pi (1) \psi (k)} } .
\eeq
\end{lemma}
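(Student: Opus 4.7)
The plan is to start from the definition of equivalence and then use the de-phased assumption to pin down the diagonal scalars in terms of entries of $H$, leaving an expression for $H'_{jk}$ involving only entries of $H$ and the permutations.

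By the definition of equivalence given earlier in the paper, there exist permutations $\pi,\psi$ of $\{1,\dots,N\}$ and unimodular scalars $c_1,\dots,c_N,d_1,\dots,d_N$ such that
\[
H'_{jk}=c_j d_k H_{\pi(j)\psi(k)}\qquad(j,k\in\{1,\dots,N\}).
\]
The first step is to exploit the hypothesis that both $H$ and $H'$ have first row and column constantly equal to $\frac{1}{\sqrt N}$ to extract the products $c_1d_k$, $c_jd_1$, and $c_1d_1$.  Setting $j=1$ gives $\frac{1}{\sqrt N}=c_1 d_k H_{\pi(1)\psi(k)}$, and setting $k=1$ gives $\frac{1}{\sqrt N}=c_j d_1 H_{\pi(j)\psi(1)}$, while $j=k=1$ gives $\frac{1}{\sqrt N}=c_1d_1 H_{\pi(1)\psi(1)}$.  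Each of these three equations can be solved for the corresponding product of scalars (the entries of $H$ are nonzero since $|H_{pq}|=\tfrac{1}{\sqrt N}$).

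The second step is the algebraic identity
\[
c_j d_k=\frac{(c_j d_1)(c_1 d_k)}{c_1 d_1},
\]
which lets one recover the general scalar product $c_jd_k$ from the three boundary ones.  Substituting the three expressions obtained above yields
\[
c_j d_k=\frac{H_{\pi(1)\psi(1)}}{\sqrt N\; H_{\pi(j)\psi(1)}\, H_{\pi(1)\psi(k)}}.
\]
Plugging this back into $H'_{jk}=c_j d_k H_{\pi(j)\psi(k)}$ gives exactly the claimed formula.  No real obstacle arises: the only thing to be careful about is consistency of the three defining equations (which is automatic since all that was used is that $H'$ is de-phased), and the nonvanishing of the denominators, which follows from $|H_{pq}|=\tfrac{1}{\sqrt N}\neq 0$.
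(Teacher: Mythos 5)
Your argument is correct and is essentially the same as the paper's: both start from the equivalence relation $H'_{jk}=c_jd_kH_{\pi(j)\psi(k)}$, use the de-phased first row and column of $H'$ to solve for $c_jd_1$, $c_1d_k$, and $c_1d_1$ in terms of entries of $H$, and combine them to eliminate the scalars. Your identity $c_jd_k=\frac{(c_jd_1)(c_1d_k)}{c_1d_1}$ is just a compact repackaging of the same substitution the paper performs.
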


\begin{proof}
Since $H$ and $H'$ are equivalent, there are permutations $\pi$ and $\psi$ and constants $c_1,...,c_N$, $d_1,...,d_N \in \mathbb{T}$ (the unit circle) such that
\beq
H'_{j,k} = c_j d_k H_{\pi (j) \psi (k)} .
\eeq
Since $H'_{j,1} = \frac{1}{\sqrt{N}} = c_j d_1 H_{\pi (j) \psi (1)} $, we obtain $c_j = \frac{1}{\sqrt{N} d_1 H_{\pi (j) \psi (1)} } $. Similarly, $d_k = \frac{1}{\sqrt{N} c_1 H_{\pi (1) \psi (k)} } $. Since $H'_{1,1} = \frac{1}{\sqrt{N}} = c_1 d_1 H_{\pi (1) \psi (1)}$, we obtain
\beq
{H'}_{j,k} = \frac{ H_{\pi (j) \psi (k)} }{{N} c_1 d_1 H_{\pi (j) \psi (1)} H_{\pi (1) \psi (k)} } ,
\eeq
and the result follows.

\end{proof}

\begin{lemma}\label{lem2.8}
Let $H$ be a Hadamard matrix whose first row and columns are constant $\frac{1}{\sqrt{N}}$. Suppose $H$ is equivalent to the standard Hadamard matrix of size $N$. Then this matrix is permutation equivalent to the standard Hadamard matrix.

\end{lemma}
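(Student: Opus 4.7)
The plan is to apply the previous lemma with one of the two equivalent Hadamard matrices being the standard Fourier matrix $F$ with entries $F_{a,b}=\frac{1}{\sqrt N}e^{2\pi i(a-1)(b-1)/N}$ (so $F$ is in de-phased form). Writing $H$ for the de-phased Hadamard matrix in the hypothesis and applying the previous lemma with $H' = H$ and the role of the ``other'' matrix played by $F$, we obtain permutations $\pi,\psi$ of $\{1,\dots,N\}$ such that
\[
H_{j,k}=\frac{F_{\pi(1)\psi(1)}\,F_{\pi(j)\psi(k)}}{\sqrt{N}\,F_{\pi(j)\psi(1)}\,F_{\pi(1)\psi(k)}}.
\]

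The next step is a direct computation using the explicit entries of $F$. Substituting $F_{a,b}=\frac{1}{\sqrt N}e^{2\pi i(a-1)(b-1)/N}$, the prefactors collapse to $\frac{1}{\sqrt N}$ and one is left with a single exponential whose exponent is
\[
(\pi(1)-1)(\psi(1)-1)+(\pi(j)-1)(\psi(k)-1)-(\pi(j)-1)(\psi(1)-1)-(\pi(1)-1)(\psi(k)-1)
\]
times $2\pi i/N$. The crucial (and only nontrivial) observation is that this expression factors as
\[
\bigl(\pi(j)-\pi(1)\bigr)\bigl(\psi(k)-\psi(1)\bigr),
\]
so that
\[
H_{j,k}=\frac{1}{\sqrt N}\exp\!\left(\tfrac{2\pi i}{N}(\pi(j)-\pi(1))(\psi(k)-\psi(1))\right).
\]

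Finally, define permutations $\sigma,\tau$ of $\{1,\dots,N\}$ by $\sigma(j)\equiv \pi(j)-\pi(1)+1\pmod N$ and $\tau(k)\equiv \psi(k)-\psi(1)+1\pmod N$ (with values chosen in $\{1,\dots,N\}$). Since $\pi$ and $\psi$ are permutations and the maps $x\mapsto x-c\pmod N$ are bijections, $\sigma$ and $\tau$ are permutations fixing $1$. The previous display then reads $H_{j,k}=F_{\sigma(j),\tau(k)}$, which is exactly the statement that $H=P_1 F P_2$ where $P_1,P_2$ are the permutation matrices corresponding to $\sigma^{-1}$ and $\tau$ (so $H$ is permutation equivalent to $F$).

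There is no real obstacle here; the only step that requires a moment's thought is recognizing that the four-term exponent factors as a product of two differences, and then the auxiliary observation that subtracting a constant modulo $N$ turns a permutation into another permutation. Everything else is bookkeeping with the definition of $F$.
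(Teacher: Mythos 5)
Your proof is correct and follows essentially the same route as the paper: apply the preceding lemma with the standard Fourier matrix as one of the two equivalent matrices, observe that the four-term exponent factors as $(\pi(j)-\pi(1))(\psi(k)-\psi(1))$, and note that subtracting a constant modulo $N$ still yields a permutation. The only difference is cosmetic bookkeeping in how the final permutation matrices are named.
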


\begin{proof}
 Using the previous lemma, we find permutations $\tau,\psi$ of $\{0,1,2,\dots,N-1\}$ such that
\beq
H_{j,k} = \frac{e^\frac{2 \pi i \tau (j) \psi (k) }{N} e^\frac{2 \pi i \tau (1) \psi (1) }{N} }{\sqrt{N} e^\frac{2 \pi i \tau (1) \psi (k) }{N} e^\frac{2 \pi i \tau (j) \psi (1) }{N} } = \frac{1}{\sqrt{N}} e^\frac{2 \pi i ( \tau (j) - \tau (1) )( \psi (k) - \psi (1) }{N} .
\eeq
Now notice that modulo $N$, the functions $\tau'(j)= \tau(j)-\tau(1)$ and $\psi'(k)=\psi(k)-\psi(1)$ are permutations of $\{0,1,2,\dots,N-1\}$. Thus $H$ is permutation equivalent to the standard Hadamard matrix.

\end{proof}

Now assume that $B \subset \bz$ with spectrum $\Lambda$ has $N$ elements, and $0$ is in both sets, and the matrix associated with $B$ and $\Lambda$ is equivalent to the standard Hadamard matrix of size $N$. If the greatest common divisor $d$ of $B$ is $1$, we may perform our calculations on the sets $\frac{1}{d} B$ and $d \Lambda$, which have the same associated matrix. Therefore, we assume without loss of generality that the greatest common divisor of $B$ is $1$. 

We apply the lemma above, and relabel the elements in $B$, so that $C$ is a permutation of $B$ and $\Gamma$ is a permutation of $\Lambda$, with elements $c_0=0,c_1,\dots,c_{N-1}$ and $\gamma_0=0,\gamma_1,\dots,\gamma_{N-1}$ respectively, and the matrix associated to $C$ and $\Gamma$ is the standard Hadamard matrix of size $N$. From the second row of this matrix, we obtain (here $i$ is the complex number, not an index)
\beq
e^{2 \pi i  c_j \gamma_1 } = e^{ 2 \pi i  j/N  } ,
\eeq
so $c_j \gamma_1 = \frac{j}{N} + m_j$ for some integers $m_j$. Then we write $\gamma_1 = \frac{z_1}{z_2}$ in lowest terms, as it is a rational number. Thus $c_j \frac{z_1}{z_2} = \frac{j+N m_j}{N} $. Taking $j=1$, we find that $z_2$ is divisible by $N$, so we let $z_2 = N z_3$. Thus $c_j z_1 = (j+N m_j) z_3$. Thus, since $z_1$ and $z_3$ are mutually prime, $z_3$ divides $c_j$ for all $j$. Since we know the greatest common divisor of $C$ is one, $z_3 = 1$. Thus $c_j z_1 \equiv j$ modulo $N$, so $C$ is a complete set of residues modulo $N$. Therefore, so is $B$.

To prove that we can take $\gcd(B_0)=1$, suppose $\gcd(B_0)=e$. Then $\frac{1}{e}B_0$ is again a complete set of representatives modulo $N$; indeed, if $\frac{b_1}{e}\equiv\frac{b_2}{e}\mod N$ then $b_1\equiv b_2\mod N$ so $b_1=b_2$. Also $\gcd(B_0)=1$ and we can write $B=dB_0=de\frac{1}{e}B_0$. 

Now we consider $\Lambda$. Examining the second column of the standard matrix, we find that $e^{2 \pi i c_1 \gamma_k} = e^{2 \pi i k/N}$. Therefore $\gamma_k$ is rational for all $k$, so $\Lambda$ is a set of rational numbers. Let $R$ be their lowest common denominator. Then $\Lambda = \frac{1}{R} L$ where $L$ is a set of integers containing zero. Thus $L$ is spectral with spectrum $\frac{1}{R} B$. So $L = f L_0$ where $L_0$ is a complete set of residues modulo $N$ with greatest common divisor one.

We now have that $(B,L)$ is a Hadamard pair with scaling factor $R$, whose matrix $H$ is equivalent (and thus permutation equivalent) to the standard Hadamard matrix. We assume without loss of generality that $H$ is the standard Hadamard matrix (after changing the order of the elements in $B$ and $L$). We let the elements in $B_0$ and $L_0$ be $b_j$ and $l_k$ respectively, $b_0=l_0=0$. Then
\beq
e^{\frac{2 \pi i d f b_j l_k}{R}} = e^{\frac{2 \pi i j k}{N}}.
\eeq
Thus there are integers $m_{j,k}$ such that
\beq
Ndfb_j l_k = R(jk+ Nm_{j,k}) .
\eeq
Letting $j=k=1$, we have that $N$ divides $R$ and thus $R=NS$. Thus
\beq
dfb_j l_k = S(jk+ Nm_{j,k}) .
\eeq
Thus $S$ divides $dfW$ where $W$ is the product of the greatest common divisors of $B_0$ and $L_0$, and thus $W=1$. Therefore $S$ divides $df$, so $df=St$. Thus
\beq
tb_j l_k = jk+ Nm_{j,k} .
\eeq
Thus $tb_1 l_1 = 1+ Nm_{j,k}$ so $t=\frac{df}{S}$ is mutually prime with $N$.

Conversely, let $B= d B_0$, $L= f L_0$ and $R = NS$ where $S$ divides $df$ and that quotient $t$ is mutually prime with $N$. Assume $B_0$ and $L_0$ are complete sets of residues modulo $N$. Since $t$ is mutually prime with $N$, $t B_0$ is a complete set of residues modulo $N$. Reorder $t B_0$ and $L_0$ from least to greatest modulo $N$. Then the matrix associated with $B$ and $L$ with scaling factor $R$ has entries
\beq
e^{\frac{2 \pi i d f b_j l_k}{R} } = e^{\frac{2 \pi i t b_j k}{N} } = e^{\frac{2 \pi i  j k}{N} }.
\eeq
Thus the matrix associated with $B,L$ with scaling factor $R$ is equivalent to the standard Hadamard matrix of size $N$, so $B,L$ is a Hadamard pair with scaling factor $R$. The same reasoning applies to any spectrum of rational numbers which meets the criteria.

\end{myproof}

The above classifies the Hadamard pairs for a certain class which contains all Hadamard pairs of size $N=2$, $3$, and $5$. More specifically, we have the next item.
%

\begin{myproof}[Proof of Corollary \ref{pr0.1}]
All Hadamard matrices of size $2$, $3$ and $5$ are equivalent to the respective standard Hadamard matrices of those sizes, so by Theorem \ref{standard} the spectral sets are in the form $B=H_0 B_0$. We know $B_0$ contains $0$, and that $B_0$ is a complete set of residues modulo $N$. We let $H_0 = q N^k$ with $q$ not divisible by $N$. Then, since $N$ is prime in this special case, $q$ is an automorphism of the integers modulo $N$, so $N^k q B_0 = N^k B_1$ where $B_1=qB_0$ is a complete set of residues modulo $N$ which contains $0$.

\end{myproof}

We now move on to $N=4$, a case where there are other types of Hadamard matrices.

\begin{myproof}[Proof of Corollary \ref{perm}]
For $N$ equal to $2$, $3$, or $5$, all dephased Hadamard matrices are equivalent to the standard one, so by Lemma \ref{lem2.8} they are permutation equivalent to it.

Let $N=4$. Let $A$ and $B$ be equivalent de-phased Hadamard matrices, where
$$A=\begin{pmatrix}
  1&1&1&1\\
  1&-1&q&-q\\
  1&-1&-q&q\\
  1&1&-1&-1
\end{pmatrix}.
$$
We shall prove that $B$ is permutation equivalent to $A$.
Before we proceed, let us prove a lemma:
\begin{lemma}\label{lem3i.1}
If the numbers $\alpha,\beta,\gamma\in\bt=\{z\in\bc : |z|=1\}$ satisfy the relation
\begin{equation}
1+\alpha+\beta+\gamma=0,
\label{eq3i.1.1}
\end{equation}
then one of them must be $-1$.
\end{lemma}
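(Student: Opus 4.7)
The plan is to use the fact that on the unit circle we have $\bar z = 1/z$, which lets us extract a second relation from the given one and pin down the elementary symmetric functions of $\alpha,\beta,\gamma$. Once these are known, $\alpha,\beta,\gamma$ are the roots of an explicit cubic polynomial which should visibly have $-1$ as a root.

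More concretely, I would first take complex conjugates in \eqref{eq3i.1.1}, obtaining $1+\bar\alpha+\bar\beta+\bar\gamma=0$, and then rewrite $\bar\alpha=\alpha^{-1}$, $\bar\beta=\beta^{-1}$, $\bar\gamma=\gamma^{-1}$ since all three lie on $\bt$. Clearing denominators by multiplying through by $\alpha\beta\gamma$ gives
\begin{equation*}
\alpha\beta\gamma + \beta\gamma + \alpha\gamma + \alpha\beta = 0,
\end{equation*}
i.e.\ the second elementary symmetric function equals $-\alpha\beta\gamma$. Combined with the original relation, which says the first elementary symmetric function equals $-1$, this determines the monic cubic polynomial having $\alpha,\beta,\gamma$ as roots up to the single parameter $s:=\alpha\beta\gamma$.

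The last step is the factorization
\begin{equation*}
x^3 + x^2 - s\, x - s \;=\; x^2(x+1) - s(x+1) \;=\; (x+1)(x^2 - s),
\end{equation*}
so $\alpha,\beta,\gamma$ are exactly the roots of $(x+1)(x^2-s)=0$. In particular at least one of them is $-1$, which is the desired conclusion.

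I do not anticipate any real obstacle: the proof is a short symmetric-function computation, and the only ``trick'' is recognizing that conjugation plus the unit-circle hypothesis converts the linear relation into a relation between $e_1,e_2,e_3$ of $\alpha,\beta,\gamma$. The factor $(x+1)$ then drops out automatically because the pair $(e_1,e_2/e_3)=(-1,-1)$ is exactly the condition for $-1$ to be a root of the monic cubic $x^3 - e_1 x^2 + e_2 x - e_3$.
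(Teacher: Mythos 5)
Your proof is correct. The opening move is the same as the paper's: conjugate the relation, use $\bar z=z^{-1}$ on $\bt$, and clear denominators to get $\alpha\beta\gamma+\alpha\beta+\beta\gamma+\gamma\alpha=0$, i.e.\ $e_2=-e_3$. Where you diverge is in how the conclusion is extracted. The paper multiplies the original relation by $\alpha\beta$, subtracts, and factors the result as $(\alpha+\beta)(\alpha\beta-\gamma)=0$; it then runs a case analysis (by symmetry over the three pairs), with a separate sub-argument for the case $\alpha\beta=\gamma$, $\alpha\gamma=\beta$, $\beta\gamma=\alpha$, which forces $\alpha,\beta,\gamma\in\{\pm1\}$. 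Your finish is slicker: since $e_1=-1$ and $e_2=-e_3$, evaluating the monic cubic $(x-\alpha)(x-\beta)(x-\gamma)=x^3-e_1x^2+e_2x-e_3$ at $x=-1$ gives $-1-e_1-e_2-e_3=0$ automatically, so $(x+1)$ divides the cubic and one of the three numbers is $-1$ — no case analysis needed. Both arguments are complete; yours packages the same two symmetric-function identities into a single factorization and is shorter, while the paper's version makes the degenerate configurations ($\alpha+\beta=0$ versus all three in $\{\pm1\}$) explicit, which is mildly informative but not needed for the statement.
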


\begin{proof}
Take the conjugate in \eqref{eq3i.1.1} and multiply by $\alpha\beta\gamma$: $\alpha\beta\gamma+\alpha\beta+\alpha\gamma+\beta\gamma=0$. Multiply \eqref{eq3i.1.1} by $\alpha\beta$: $\alpha\beta+\alpha^2\beta+\alpha\beta^2+\alpha\beta\gamma=0$. Now substract these two reltations to obtain
$0=\alpha\beta^2-\beta\gamma+\alpha^2\beta-\alpha\gamma=(\beta+\alpha)(\alpha\beta-\gamma)$ so
$\alpha+\beta=0$ or $\alpha\beta=\gamma$. Similarly, by symmetry, we obtain $\alpha+\gamma=0$ or $\alpha\gamma=\beta$. Also $\beta+\gamma=0$ or $\beta\gamma=\alpha$.

If $\alpha+\beta=0$ then, using \eqref{eq3i.1.1}, we get that $\gamma=-1$. Therefore, if one of these relations is true, then the lemma is proved. If none is true, then we must have $\alpha\beta=\gamma$ and $\alpha\gamma=\beta$ and $\beta\gamma=\alpha$. Multiply them: $\alpha^2\beta^2\gamma^2=\alpha\beta\gamma$, so $\alpha\beta\gamma=1$. Multiply the first relation by $\gamma$, we obtain that $\gamma^2=1$. Similarly $\alpha^2=1$ and $\beta^2=1$. So $\alpha,\beta,\gamma=\pm 1$. We cannot have all of them $1$, because of \eqref{eq3i.1.1}, therefore one has to be -1.
\end{proof}

Therefore the matrix $B$ has the number negative one in every row and every column. 
Thus each row and column has a 1 and -1. Consider now the other entries of the matrix which are not $\pm 1$. If we fix one, denote it by $t$ then the other non $\pm1$ entries which lie on the same row or column will have to be $-t$, because of \eqref{eq3i.1.1}. Using the same procedure we can fill out some more entries by $t$ and all the entries of the matrix are completely determined in this way. Now suppose we have two rows such that the entries 1 and $-1$ do not match, for example $(1, -1, \ast,\ast)$ and $(1,\ast, -1,\ast)$. Then
the two rows will be of the form $(1,-1, t,-t)$ and $(1,-t,-1,t)$. By orthogonality, we get $0=1+\cj t-t-t\cj t=\cj t-t$. So $t$ has to be real so $t=\pm1$.

If we have two rows such that the $\pm1$ entries match, for example $(1,-1,t,-t)$ and $(1,-1, -t, t)$, then the last row is forced to be $(1,1,-1,-1)$.
Thus, in both cases, one of the rows has to have two ones and two $-1$. Similarly, one of the columns has the same property. Thus $B$ is permutation equivalent to a matrix of the form:

$$C=\begin{pmatrix}
  1&1&1&1\\
  1&-1&t&-t\\
  1&-1&-t&t\\
  1&1&-1&-1
\end{pmatrix}.
$$
Therefore $A$ is equivalent to $C$. Now let us prove another lemma. This lemma is found in \cite{TaZy06}, where it is attributed to Haagerup \cite{Haa97}, though it does not appear in its present form in \cite{Haa97}.

\begin{lemma}\label{haag}
Let $H$ be a Hadamard matrix and consider the set $T(H) = \{ H_{j,k} \overline{H_{n,k}} H_{n,m} \overline{H_{j,m}} \}$. If $A$ and $B$ are equivalent Hadamard matrices, $T(A) = T(B)$. The set $T$ is called the invariants of a Hadamard matrix.

\end{lemma}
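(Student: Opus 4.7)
The plan is to unwind the definition of equivalence of Hadamard matrices and check directly that each product $B_{j,k}\overline{B_{n,k}}B_{n,m}\overline{B_{j,m}}$ equals some product of the same form for $A$; the invariance of the whole set $T$ will then follow by a change of indices through the permutations.

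Concretely, if $A$ and $B$ are equivalent then there exist permutations $\pi,\rho$ of $\{1,\dots,N\}$ and unit complex numbers $c_1,\dots,c_N$, $d_1,\dots,d_N$ such that
\begin{equation*}
B_{j,k} = c_j\,d_k\,A_{\pi(j),\rho(k)}.
\end{equation*}
I would substitute this into the defining expression for $T(B)$. The key observation to emphasize is that in the product
\begin{equation*}
B_{j,k}\overline{B_{n,k}}\,B_{n,m}\overline{B_{j,m}},
\end{equation*}
each row index ($j$ and $n$) appears exactly once with a bar and once without, and the same is true of each column index ($k$ and $m$). Hence the scalar prefactor is
\begin{equation*}
(c_j\overline{c_j})(c_n\overline{c_n})(d_k\overline{d_k})(d_m\overline{d_m}) = 1,
\end{equation*}
so the prefactors drop out completely and one is left with
\begin{equation*}
B_{j,k}\overline{B_{n,k}}\,B_{n,m}\overline{B_{j,m}} = A_{\pi(j),\rho(k)}\overline{A_{\pi(n),\rho(k)}}\,A_{\pi(n),\rho(m)}\overline{A_{\pi(j),\rho(m)}}.
\end{equation*}

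To finish, I would note that as $(j,k,n,m)$ ranges over $\{1,\dots,N\}^4$, the tuple $(\pi(j),\rho(k),\pi(n),\rho(m))$ also ranges over $\{1,\dots,N\}^4$ because $\pi$ and $\rho$ are bijections. Therefore $T(B)\subseteq T(A)$, and by symmetry (applying the same argument to $A$ written in terms of $B$ via the inverse equivalence) $T(A)\subseteq T(B)$, giving $T(A)=T(B)$. There is really no obstacle here: the crux is the bookkeeping observation that the specific pattern of conjugates in the definition of $T(H)$ was designed precisely so that all row- and column-rescaling factors cancel, leaving an expression invariant under permutation re-indexing.
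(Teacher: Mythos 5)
Your proposal is correct and follows essentially the same route as the paper: substitute the equivalence relation into the defining product and observe that each row and column scalar appears once barred and once unbarred, so the unimodular prefactors cancel, while the permutations merely re-index the set. The paper splits the argument into a permutation step and a diagonal-rescaling step, whereas you handle both simultaneously, but the key computation is identical.
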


\begin{proof}
Assume $A$ and $B$ are permutation equivalent. Then, since they have the same entries, $T(A) = T(B)$. Then it is sufficient to prove that if $A$ and $C$ are equivalent via diagonal matrices $X$ and $Y$, so that $A = X C Y$, then they have the same invariants. Note that $A_{j,k} = X_{j,j} C_{j,k} Y_{k,k}$. We compute the elements of $T(A)=T(XCY)$:
\beq
A_{j,k} \overline{A_{n,k}} A_{n,m} \overline{A_{j,m}} = X_{j,j} C_{j,k} Y_{k,k} \overline{X_{n,n} C_{n,k} Y_{k,k}} X_{n,n} C_{n,m} Y_{m,m} \overline{X_{j,j} C_{j,m} Y_{m,m}}
\eeq
Simplifying the right hand side, we then obtain the desired result as $X_{q,q} \overline{X_{q,q}} =1$.

\end{proof}

Now notice that the above lemma implies that if two Hadamard matrices are equivalent and de-phased, they have the same elements (we let $j$ and $k$ be any numbers and the rest of the indices be $1$). Therefore, examining $A$ and $C$, we can see that $t = \pm q$, so $A$ and $C$ are permutation equivalent. But $B$ and $C$ are permutation equivalent, so $A$ and $B$ are permutation equivalent.

\end{myproof}
\begin{myproof}[Proof of Theorem \ref{thha4}]

We first prove that the spectra can be decomposed as $\frac1R L$. We know from Theorem \ref{th1.15} and Corollary \ref{perm} that the matrix associated with $B$ and $\Lambda$ is permutation equivalent with a matrix that has a $-1$ in every row except the first. Therefore for each non-zero element $\lambda_k$ of $\Lambda$ there is a $j$ such that $b_j \in B$ and $e^{2 \pi i b_j \lambda_k} = -1$. Therefore since $b_j$ are integers, $\Lambda$ is a set of rational numbers. So we let $\Lambda = \frac{1}{R} L$ where $L$ is a set of integers containing $0$, and we have the result.

 Using Theorem \ref{th1.15} and Corollary \ref{perm} we have that the matrix $H:=\frac1{\sqrt{4}}\left(e^{2\pi ib\lambda}\right)_{b\in B,\lambda\in\Lambda}$ is of the form given in \eqref{eqmat4}, after some pemutations of $B$ and $\Lambda$. This means, upon some relabelling, that we have for some $\lambda\in\Lambda$ and $B=\{0,b_1,b_2,b_3\}$: $e^{2\pi ib_1\lambda}=1$, $e^{2\pi i b_2\lambda}=-1$, $e^{2\pi ib_3\lambda}=-1$. Therefore $b_1\lambda=k_1, b_2\lambda=\frac{2k_2+1}2,b_3\lambda=\frac{2k_3+1}2$ for some $k_1,k_2,k_3\in\bz$.

We can write $b_1=2^{a_1}c_1$, $b_2=2^{a_2}c_2$, $b_3=2^{a_3}c_3$ with $a_1,a_2,a_3\geq0$ in $\bz$ and $c_1,c_2,c_3$ odd. We get that $\frac{2^{a_1}c_1}{2^{a_2}c_2}=\frac{2k_1}{2k_2+1}$ so $2^{a_1}c_1(2k_2+1)=2^{a_2+1}k_1c_2$. This implies that $a_1\geq a_2+1$.

Also $\frac{2^{a_2}c_2}{2^{a_3}c_3}=\frac{2k_2+1}{2k_3+1}$ so $2^{a_2}c_2(2k_3+1)=2^{a_3}c_3(2k_2+1)$, which implies that $a_2=a_3$.

Since $B$ is spectral iff $\frac{1}{2^{a_2}}B$ is spectral, we can assume, without loss of generality, dividing by $\frac{1}{2^{a_1}}$, that $B$ is of the form
$$B=\{0,2^ac_1,c_2,c_3\},$$
with $a\geq 1$, $c_1,c_2,c_3$ odd.

Since every row has a $-1$, there is a $\lambda_2\in\Lambda$ such that $e^{2\pi i 2^a c_1\lambda_2}=-1$. Therefore $2^{a+1}c_1\lambda_2=2m+1$ for some $m\in\bz$. So $\lambda_2=\frac{2m+1}{2^{a+1}c_1}$. The other two entries on the column of $\lambda_2$ must be opposite:
$$e^{2\pi i c_2\frac{2m+1}{2^{a+1}c_1}}=-e^{2\pi i c_3\frac{2m+1}{2^{a+1}c_1}},$$
which means that
$$\frac{2m+1}{2^ac_1}c_2=\frac{2m+1}{2^ac_1}c_3+2q+1,$$
for some $q\in\bz$. Then $(2m+1)c_2=(2m+1)c_3+2^ac_1(2q+1)$. This implies that $c_3-c_2=2^ad$ for some odd number $d$. This proves that $B$ has the given form.

We have proved that $B$ (containing $0$) is a set of integers with $N=4$ elements is spectral if and only if it is of the form given in the theorem.

Assume now $(B,L)$ is a Hadamard pair with scaling factor $R$. Then, since $B$ and $L$ are both spectral sets of integers, we must have  $B=2^C \{0, 2^a c_1, c_2, c_2 + 2^a c_3\}$ and $L=2^M \{0, n_1, n_1 + 2^K n_2, 2^K n_3\}$, where $c_i$ and $n_i$ are all odd, $a$ and $K$ are positive integers, and $C$ and $M$ are non-negative integers.

Recall the Hadamard matrix for $N=4$,
$$\begin{pmatrix}
  1&1&1&1\\
  1&-1&e^{\pi i q}&-e^{\pi i q}\\
  1&-1&-e^{\pi i q}&e^{\pi i q}\\
  1&1&-1&-1
\end{pmatrix}.
$$
Here $q$ is any rational number, though we will see that not all rational numbers correspond to a Hadamard pair. We do not yet know which elements (other than $0$) in $B$ and $L$ are associated with which rows and columns. 

First we shall prove that the odd elements in $\{0, 2^a c_1, c_2, c_2 + 2^a c_3\}$ can not be associated with the entry $+1$ in the matrix above. Let us assume for contradiction's sake that the elements $2^C g \in B$ and $2^M f \in L$ are associated with the matrix entry $1$, where $g$ is odd. Then $\text{exp} \left( \frac{2 \pi i 2^{C+M} g f}{R} \right) = 1$, so $\frac{2 \pi i 2^{C+M} g f}{R} = 2 \pi  i Z $ for some integer $Z$. Then, the matrix entry associated with $2^{C+a} c_1 \in B$ and  $2^M f \in L$ must be $-1$, as $0$ is associated with $1$ and $-1$ are the only other entries in that column. Then $\text{exp} \left( 2 \pi  i \frac{2^{a+M+C} c_1 f}{R} \right) = -1 = \text{exp} \left( 2 \pi  i \frac{2^{a+M+C} c_1 f g}{R g} \right)$. Substituting, $-1 = \text{exp} \left( 2 \pi  i \frac{2^{a} c_1 Z}{ g} \right)$. Since $g$ is odd, this is impossible. Therefore, the first non-zero element of $B$ (in our current ordering) must be associated with the matrix element $1$ that is not in the first row or column. By similar reasoning, so must the last element of $L$.

Therefore, the first non-zero element of $B$ is associated with the second column of the matrix, as depicted above, and the last element of $L$ is associated with the last row of the matrix. In making these statements we make use of the fact that changing the order of the elements in a set which is part of a Hadamard pair permutes the columns or rows of the associated matrix and vice versa, and that therefore it is sufficient to consider the order of the rows and columns of $A$ as depicted above.

Now we shall show $K=a$. We have, from the second column and last row: $\text{exp} (\pi i) = \text{exp} \left( \frac{2 \pi i 2^{C+M+a} c_1 g}{R} \right) = \text{exp} \left( \frac{2 \pi i 2^{C+M+K} n_3 f}{R} \right)$, where $g$ and $f$ are odd. Thus $R$ has a power of $2$ exactly equal to both $1+C+M+a$ and $1+C+M+K$, so $K=a$.

We now also know that $R= 2^{C+M+a+1} d$, where $d$ is odd. Let $c$ be the greatest common divisor of the $c_k$'s and $n$ that of the $n_k$'s. Examining column two in the matrix above, we can see that for every $2^Mg$ in $L$, we have $\exp\left(\frac{\pi i c_1g}{d}\right)=\pm1$. Therefore, $d$ must divide $c_1 g$. Thus, since $d$ is odd, $d$ divides $c_1 n_1$, then it divides $c_1 n_2$ and $c_1n_3$. Therefore $d$ divides $c_1 n$. Similarly, from the last row we have that $d$ divides $n_3 c$. From the third column and the last column, since the corresponding entries are equal or opposite, we get that $\text{exp} \left( 2 \pi i \frac{2^{a+C} c_3 l_j}{R} \right) = \pm 1$ for all $l_j \in L$. Therefore, since $d$ is odd, $d$ must divide $c_3 l_j$ for every $l_j \in L$, so as before $d$ must divide $c_3 n$. Similarly, comparing the second and third rows, we have that $d$ divides $n_2 c$.
Thus, we have that $B$, $L$, and $R$ are as stated.

Conversely, it is easy to check that such a $B$, $L$, and $R$ lead to the Hadamard matrix above.

\end{myproof}

This gives a complete classification of Hadamard pairs of integers when $N=4$. 

\begin{remark}
There are Hadamard matrices that do not correspond to Hadamard pairs.

Consider the case when $q=0$ in the construction above for Hadamard matrices where $N=4$, which corresponds to the matrix
$$\begin{pmatrix}
  1&1&1&1\\
  1&-1&1&-1\\
  1&-1&-1&1\\
  1&1&-1&-1
\end{pmatrix} .
$$
Assume this matrix has a Hadamard pair, so it can be written as above. Consider the matrix element associated with $c_2$ and $n_1$. We have from the proof of Theorem \ref{thha4} that $k=\frac{c_2 n_1}{2^a d}$, where $k$ is an integer (so that the matrix entry is $-1$ or $1$), but $c_2$ and $n_1$ are odd and the denominator of the right hand side is even, so no Hadamard pair of integers has this as the associated matrix. Therefore no Hadamard pair of rational numbers has this as the associated matrix, and therefore, since every column contains an $R$th root of unity for some integer $R$, no set of integers $B$ and set of real numbers $\Lambda$ has this as the associated matrix.

At this point we recall that the Hadamard matrices for $N=6$ are not completely classified. The above example suggests a question: what are the Hadamard matrices for $N=6$ that arise from Hadamard pairs? We do not yet know how to answer this question.
\end{remark}


\subsection{Spectral sets in $\br$}

\begin{lemma}\label{lem0.8}
Let $p\in \bn$. Assume the following statement is true: for every set $\Gamma=\{\lambda_0=0,\lambda_1,\dots,\lambda_{p-1}\}$ in $\br$, which has a spectrum of the form $\frac1p A$ with $A\subset\bz$, there exists a subset $\mathcal T$ of $\bz$ such that for any spectrum of $\Gamma$ of the form $\frac1pA'$ with $A'\subset\bz$, the set $A'$ tiles $\bz$ by $\mathcal T$. 

Then spectral implies tile for period $p$. 

\end{lemma}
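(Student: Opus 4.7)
The plan is to construct a tiling complement for $\Omega$ of the form $\tfrac1p\mathcal T\subset\tfrac1p\bz$, where $\mathcal T$ is supplied by the hypothesis applied to the set $\Gamma$ of residues of $\Lambda$ modulo $p$. By the Iosevich--Kolountzakis theorem $p$ is a positive integer, and after translating $\Lambda$ (which preserves spectrality) I may assume $0\in\Lambda$. Then $p\bz\subset\Lambda$ and $\Lambda=\Gamma+p\bz$ with $\Gamma=\{0,\gamma_1,\dots,\gamma_{p-1}\}\subset[0,p)$; the cardinality of $\Gamma$ is forced to be $p$ because the Beurling density of $\Lambda$ equals $|\Omega|=1$. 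From $p\bz\subset\Lambda$ one gets $\widehat{\chi_\Omega}(pn)=0$ for $n\neq 0$, and computing the Fourier series of the $1/p$-periodic function $P(x)=\sum_k\chi_\Omega(x+k/p)$ yields the multitiling identity $P(x)=p$ a.e. Consequently, for a.e.\ $x\in[0,1/p)$ the set $A(x):=\{k\in\bz:x+k/p\in\Omega\}$ has exactly $p$ elements $n_1(x)<\dots<n_p(x)$, all contained in a fixed finite range since $\Omega$ is bounded.

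The main step is to show that $\tfrac1p A(x)$ is a spectrum of $\Gamma$ for a.e.\ $x$. For distinct $\gamma,\gamma'\in\Gamma$ and every $k\in\bz$, $(\gamma-\gamma')+pk$ lies in $\Lambda-\Lambda\setminus\{0\}$, so splitting $\Omega$ into its $p$ sheets over $[0,1/p)$ (on each piece $n_j$ is constant, so $dy=dx$) and using $e^{-2\pi i kn_j(x)}=1$,
\[
0=\widehat{\chi_\Omega}((\gamma-\gamma')+pk)=\int_0^{1/p}e^{-2\pi i pkx}\Bigl(e^{-2\pi i(\gamma-\gamma')x}\sum_{j=1}^pe^{-2\pi i(\gamma-\gamma')n_j(x)/p}\Bigr)\,dx.
\]
Running over all $k\in\bz$ and invoking completeness of $\{e^{2\pi i pkx}\}_{k\in\bz}$ in $L^2([0,1/p))$, the expression in parentheses must vanish a.e., and since $|e^{-2\pi i(\gamma-\gamma')x}|=1$,
\[
\sum_{j=1}^p e^{2\pi i(\gamma-\gamma')n_j(x)/p}=0\quad\text{for a.e. }x\text{ and every }\gamma\neq\gamma'\in\Gamma.
\]
This says that the rows of the $p\times p$ matrix $\tfrac{1}{\sqrt p}\bigl(e^{2\pi i\gamma n_j(x)/p}\bigr)_{\gamma,j}$ are orthonormal, hence the matrix is unitary, hence its columns are orthonormal, which is precisely the statement that $\tfrac1p A(x)$ is a spectrum of $\Gamma$.

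With this Hadamard structure in hand, the hypothesis (applied to our $\Gamma$, which now has a spectrum of the required form) produces a single $\mathcal T\subset\bz$ such that $A'\oplus\mathcal T=\bz$ for every $A'\subset\bz$ with $\tfrac1p A'$ a spectrum of $\Gamma$; in particular $A(x)\oplus\mathcal T=\bz$ for a.e.\ $x$. I then close by showing $\Omega\oplus\tfrac1p\mathcal T=\br$: for a.e.\ $y\in\br$, write $y=x+k/p$ with $x\in[0,1/p)$ and $k\in\bz$, and observe that $y\in\Omega+t/p$ for $t\in\mathcal T$ is equivalent to $k-t\in A(x)$; the unique decomposition $k=a+t$ with $a\in A(x)$, $t\in\mathcal T$ gives a unique such $t$, yielding both covering and disjointness. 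The main obstacle is the second paragraph, where the global orthogonality of $\Lambda$ in $L^2(\Omega)$ must be converted into the pointwise Hadamard identities $\sum_j e^{2\pi i(\gamma-\gamma')n_j(x)/p}=0$; once that is in place, everything else is periodization bookkeeping or a direct invocation of the hypothesis.
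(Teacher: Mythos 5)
Your proof is correct. The paper does not actually argue this lemma---its ``proof'' is a one-line deferral to \cite{DJ12}---and your write-up is in substance a self-contained reconstruction of the argument from that reference: the periodization identity $\sum_k\chi_\Omega(x+k/p)=p$ forced by $p\bz\subset\Lambda$, the a.e.\ unitarity of the fiber matrices showing $\tfrac1pA(x)$ is a spectrum of $\Gamma$, and the universal complement $\mathcal T$ rescaled to $\tfrac1p\mathcal T$.
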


\begin{proof}
The result follows from \cite{DJ12}.

\end{proof}


\begin{myproof}[Proof of Theorem \ref{th0.9}] We use Lemma \ref{lem0.8}.

For $p=2$, take a set $\Gamma=\{0,\lambda\}$ which has a spectrum of the form $\frac12A$ with $A\subset\bz$. Using a translation we can assume $0\in A$, so $A=\{0,b\}$ with $b\in\bz$. Write $b=2^ac$ with $a\geq 0$, $c$ odd. Since $\frac12 A$ is a spectrum for $\Gamma$, the matrix $\frac1{\sqrt2}(e^{2\pi i\lambda a})_{\lambda\in\Lambda,a\in A}$ is unitary and the first row is $\frac{1}{\sqrt2}(1,1)$ and the second is $\frac{1}{\sqrt2}(1, e^{2\pi i\lambda\frac12 2^a c})$. Therefore $e^{2\pi i\lambda\frac12 2^ac}=-1$, hence $\frac12 2^ac\lambda=\frac12+k$ for some $k\in\bz$. Thus $\lambda=\frac{1+2k}{2^ac}$. 

Now take another spectrum of the same form $\frac12A'$ with $A'=\{0,2^{a'}c'\}$. Then $\lambda=\frac{1+2k'}{2^{a'}c}$ with $k'\in\bz$. This implies that 
$2^{a'}c'(1+2k')=2^ac(1+2k)$. Since $c$ and $c'$ are odd this means that $a=a'$. So the number $a$ depends only on $\Gamma$, not on the choice of the spectrum $\frac12A$. 

If a set $A$ is of the form $\{0,2^ac\}$ with $a\geq0$, $c$ odd then $A$ tiles $\bz$ by $\mathcal T:=\{0,1,\dots, 2^a-1\}\oplus 2^{a+1}\bz$. Indeed $\{0,c\}\oplus 2\bz=\bz$ so $2^a\{0,c\}\oplus 2^{a+1}\bz=2^a\bz$ so $A\oplus 2^a\bz\oplus \{0,1,\dots, 2^a-1\}=\bz$. Since $\mathcal T$ depends only on $a$ and not on $c$, hence it depends only on $\Gamma$ and not on the choice of the spectrum $\frac12A$, it follows that the hypothesis of Lemma \ref{lem0.8} are satisfied for $p=2$ and therefore spectral implies tile for period 2.

For $p=3$, $\Gamma=\{0,\lambda_1,\lambda_2\}$ which has a spectrum  of the form $\frac13A$ with $A\subset\bz$. Again, we can assume all the spectra contain $0$. 
Then $A$ is also spectral with spectrum $\frac13\Gamma$. From Corollary \ref{pr0.1} we see that $A=3^aB$ with $a\geq0$ and $B$ a complete set of representatives modulo 3. We claim that the number $a$ depends only on $\Gamma$, not on the choice of the spectrum $\frac13A$. As we see from the proof of Corollary \ref{pr0.1}, the first row of the matrix $(e^{2\pi i\lambda b})_{\lambda\in\Gamma,b\in\frac13A}$ is $(1,1,1)$ and the other two have the entries $\{1,e^{2\pi i/3},e^{4\pi i/3}\}$. This means that there is a $b_1\in B$ such that $e^{2\pi i \lambda_1\frac13 3^ab_1}=e^{2\pi i/3}$ and $b_1\not\equiv 0\mod 3$.
Then $\frac13 3^ab_1\lambda_1=\frac13+k$ for some $k\in\bz$, so $\lambda_1=\frac{1+3k}{3^ab_1}$. 

Now take anothe spectrum $\frac13A'$ with $A'=3^{a'}B'$. We get $\lambda_1=\frac{1+3k'}{3^{a'}b_1'}$ for some $k'\in\bz$, $b_1'\in B'$, $b_1'\not\equiv0\mod 3$. Then $3^{a'}b_1'(1+3k)=3^ab_1(1+3k')$. Since $b_1',b_1$ are not divisible by 3, it follows that $a=a'$. 

A set of the form $3^aB$ where $a\geq 0$ and $B$ is a complete set of representatives modulo 3 tiles $\bz$ by $\mathcal T:=\{0,1,\dots,3^a-1\}\oplus 3^{a+1}\bz$. Indeed 
$B\oplus 3\bz=\bz$, which implies that $3^aB\oplus 3^{a+1}\bz=3^a\bz$, so $3^aB\oplus 3^{a+1}\bz\oplus\{0,1,\dots,3^a-1\}=\bz$.

Since $\mathcal T$ depends only on $\Gamma$, Lemma \ref{lem0.8} shows that spectral implies tile for period 3.

For $p=4$, $\Gamma=\{0,\lambda_1,\lambda_2, \lambda_3 \}$ with spectrum  of the form $\frac14A$ with $A\subset\bz$. We assume all the spectra contain $0$.
Then $A$ is also spectral with spectrum $\frac14\Gamma$. From Theorem \ref{thha4} we have $A=2^m \{0, 2^a c_1, c_2, c_2 +2^a c_3 \}$ where all $c_i$ are odd, $m$ and $a$ are integers, and $a$ is positive.
In the present case, up to permutation of rows and columns, all Hadamard matricies are equivalent to the following (we omit the constant $\frac{1}{2}$): 
$$\begin{pmatrix}
  1&1&1&1\\
  1&-1&e^{\pi i q}&-e^{\pi i q}\\
  1&-1&-e^{\pi i q}&e^{\pi i q}\\
  1&1&-1&-1
\end{pmatrix},
$$
where $q$ is a real (even rational) number.
Without loss of generality, we assume that $\lambda_1$ is associated to the first column of the matrix. Then we deduce that $2^{m+a} c_1$ is associated to the last row of the matrix (see the proof of Theorem \ref{thha4}). Hence, $e^{2 \pi i \frac14 \lambda_1 2^{m+a} c_1}=1$. Thus, from the second column and either (or both) the second or third row, $e^{2 \pi i \frac14 \lambda_1 2^{m} c_2}=-1$. Thus $\lambda_1 2^{m-1}c_2$ is odd. Thus, since $c_2$ is odd, $\lambda_1$ determines $m$. From the third column and last row, we obtain $e^{2 \pi i \frac14 \lambda_2 2^{m+a} c_1}=-1$. Then $\lambda_2 2^{m+a-1}c_1$ is odd. Thus, $\lambda_2$ determines $m+a$. This means that $\Gamma$ determines $m$ and $a$.

Therefore, in our calculations we take $A$ as above with $a$ and $m$ fixed. It remains to show the existence of a tile $\mathcal T$ for $A$ that depends only on $a$ and $m$, which will show by Lemma \ref{lem0.8} that spectral implies tile for period 4.

We shall turn our attention to the simpler problem of finding a tile dependent only on $a$ for $A_0 = \{0, 2^a c_1, c_2, c_2 +2^a c_3\}$. We consider this set, modulo $2^{a+1}$. We have representatives for $0$, $2^a$, an odd number, and $2^a$ plus that odd number. We consider $T_0 = \{0,2,4,6,\dots,2^a -2\}$. We notice that $T_0 \oplus A_0 = \bz (\mod  2^{a+1})$. Hence, $T_0 \oplus A_0 \oplus 2^{a+1} \bz = \bz$, so we have a tile for $A_0$. We notice that $2^m T_0 \oplus 2^m A_0 \oplus 2^m 2^{a+1} \bz = 2^m \bz$, so $\{0,1,\dots,2^m -1\} \oplus 2^m T_0 \oplus 2^{m+a+1} \bz \oplus A = \bz$. Therefore, $\mathcal T = \{0,1,\dots,2^m -1\} \oplus 2^m T_0 \oplus 2^{m+a+1} \bz$ is a tile for $A$ which depends only on $a$ and $m$, so spectral implies tile for period 4.

 For $p=5$, $\Gamma = \{0, \lambda_1, \lambda_2, \lambda_3, \lambda_4
 \}$, with spectrum $\frac{1}{5} B$ where $B$ is a set of integers
 containing $0$. Then $B$ is spectral with spectrum $\frac{1}{5}
 \Gamma$. Then, by Corollary \ref{pr0.1}, $B=5^a \{0,b_1,b_2,b_3,b_4\}$ where
 $\{0,b_1,b_2,b_3,b_4\}$ is a complete set of residues modulo $5$ and
 $a$ is a non-negative integer. We shall show that the number $a$
 depends only on $\Gamma$. \\
 With Lemma \ref{lem2.8} the matrix associated with $(B,\frac15\Gamma)$ is (after some relabeling of $B,\Gamma$):
 $$\begin{pmatrix}
  1&1&1&1&1\\
  1&w&w^2&w^3&w^4\\
  1&w^2&w^4&w&w^3\\
  1&w^3&w&w^4&w^2 \\
  1&w^4&w^3&w^2&w 
\end{pmatrix},
$$
where $w=e^{\frac{2 \pi i}{5}}$. Select $j$ so that $b_j \equiv 1 (\mod 5)$. Now select $k$ so that $e^{\frac{2 \pi i}{5}} = e^{\frac{2 \pi i b_j \lambda_k}5}$. Then $e^{\frac{2 \pi i}{5}} = e^{\frac{2 \pi i 5^a \lambda_k}5}$. Thus, $a$ depends only on $\Gamma$. Thus, it remains to show the existence of a tile $\mathcal T$ for $B$ that depends only on $a$. \\
Since $B_0=\{0,b_1,b_2,b_3,b_4\}$ is a complete set of residues modulo $5$, a tile for $B_0$ is $T_0 = 5\bz$. Therefore, a tile for $B$ is $\mathcal T = \{0,1,\dots,5^a -1\} \oplus 5^{a+1} \bz$. This tile depends only on $a$, so spectral implies tile for period $5$.

\end{myproof}

\subsection{Complementing Hadamard pairs}
Now we would like to find complementary Hadamard pairs whenever possible for the cases $N=2,3,4,5$ that we have been exploring.
%
%
\begin{myproof}[Proof of Proposition \ref{prHP}] One can check this directly, by verifying the orthogonality of the rows, but we show that we are in a particular case of a more general construction of Hadamard matrices. 

 We shall prove that the matrix associated with $B\oplus F, L \oplus G$ with scaling factor $R$ can be obtained by Di\c t\u a's construction (see e.g. \cite{TaZy06}), and is therefore a Hadamard matrix. Di\c ta's construction is a generalization of the fact that the tensor product of Hadamard matrices is a Hadamard matrix: Let $A$ be a Hadamard matrix and $\{Q_1, \dots, Q_k\}$ be (possibly different) Hadamard matrices. Let $\{E_1, E_2, \dots, E_k\}$ be unitary diagonal matrices whose first element is $1$, and where $E_1$ is the identity. Then the following is a Hadamard matrix:
 $$D = \begin{pmatrix}
  A_{1,1} E_1 Q_1 & A_{1,2} E_2 Q_2 & \dots & A_{1,k} E_k Q_k \\
  . & . & . & . \\
  A_{k,1} E_1 Q_1 & A_{k,2} E_2 Q_2 & \dots & A_{k,k} E_k Q_k \\
\end{pmatrix} .
$$
Consider one way to write the matrix elements of the tensor product of matrices of size $N$:
\beq
(A\otimes B)_{\alpha, \beta} = A_{j,l} B_{m,n},
\eeq
where $\alpha = N(j-1) +m $ and $\beta = N(l-1) +n$. As one varies $n$, $m$, $j$, and $l$, one obtains the elements of $(A\otimes B)$. We generalize this formula to fit Di\c{t}\u a's construction, and assume $A$ is size $J$ and the $Q$s and $E$s are size $N$:
\beq
D_{\alpha, \beta} = A_{j,l} \left( E_l Q_l \right)_{m,n} ,
\eeq
where $\alpha = J(j-1) +m $ and $\beta = J(l-1) +n$. As before one varies the indexes on the right to obtain the entries in $D$. We notice that the $E$s are diagonal matrices, and therefore  
\beq \label{dita}
D_{\alpha, \beta} = A_{j,l} ( E_l )_{m,m} (Q_l )_{m,n} .
\eeq

Now consider the matrix associated with $B\oplus F, L \oplus G$ with scaling factor $R$. Let $B_j \in B$, $L_l \in L$, $F_m \in F$, $G_n \in G$. Thus $j$ and $l$ range from $1$ to, say, $J$, and $n$ and $m$ range from $1$ to, say, $N$. We have
\begin{equation}
X_{\alpha, \beta} =\left( \texttt{exp}\left( \frac{2 \pi i}{R}(B_j+F_m)(L_l+G_n)\right) \right)_{j,l,m,n}.
\end{equation}
The interaction of the indexes on the left and right depends on the way we organize $B\oplus F$ and $ L \oplus G$. We shall choose to organize $B\oplus F$ in such a way that the first $N$ elements of the set are given by $B_1 + F_m$, for $1\leq m \leq N$, and so on. We shall do the same things with $L \oplus G$, fix $L$ first and vary $G$. In this way, we have determined that, as in the constructions above, $\alpha = J(j-1) +m $ and $\beta = J(l-1) +n$, and thus by varying $j$, $l$, $m$, and $n$, we obtain $X$.

From the hypothesis, $\texttt{exp}\left( \frac{2 \pi i}{R}B_j G_n\right)=1$ for $B_j\in B$, $G_n\in G$. Thus we have

\begin{equation}
X_{\alpha, \beta}=\left( \texttt{exp}\left( \frac{2 \pi i}{R}(B_j L_l)\right) \texttt{exp}\left( \frac{2 \pi i}{R}(L_l F_m)\right) \texttt{exp}\left( \frac{2 \pi i}{R}(F_m G_n)\right) \right)_{j,l,m,n}.
\end{equation}
We arrange the indices:
\beq 
X_{\alpha, \beta}=\left( \texttt{exp}\left( \frac{2 \pi i}{R}(B_j L_l)\right) \right)_{j,l} \left(\texttt{exp}\left( \frac{2 \pi i}{R}(L_l F_m)\right) \right)_{l,m} \left( \texttt{exp}\left( \frac{2 \pi i}{R}(F_m G_n)\right) \right)_{m,n}.
\eeq
This is exactly like Di\c{t}\u{a}'s construction \eqref{dita}: the role of the constants $(E_l)_{m,m}$ are played by the constants $\left(\texttt{exp}\left( \frac{2 \pi i}{R}(L_l F_m)\right) \right)_{l,m}$, and when $l$ or $m$ are $1$  this is indeed $1$, and otherwise they are roots of unity as required. In addition the matrices $\left( \texttt{exp}\left( \frac{2 \pi i}{R}(B_j L_l)\right) \right)_{j,l}$ and $\left( \texttt{exp}\left( \frac{2 \pi i}{R}(F_m G_n)\right) \right)_{m,n}$ are Hadamard matrices. Thus, the matrix associated with $B\oplus F, L \oplus G$ with scaling factor $R$ is a Hadamard matrix, so they are a Hadamard pair.
\end{myproof}

%
\begin{myproof}[Proof of Theorem \ref{th0.4a}]
As in Theorem \ref{standard}, we have that $B=h_0 N^f \{0 = b_0, b_1 , \dots ,b_{N-1} \}$ and $L=h_1 N^g \{0 = l_0, l_1 , \dots ,l_{N-1} \}$ for some non-negative integers $f$ and $g$ and positive integers $h_0$ and $h_1$ not divisible by $N$, and $\{ b_i \}$ and $\{ l_i \}$ are complete sets of residues modulo $N$. Here we have decomposed the greatest common divisors of $B$ and $L$ into powers of $N$, and other numbers. Also $R=NS$ where $S$ divides $N^{f+g}h_0h_1$ and $Z_2:=N^{f+g}h_0h_1/S$ is prime with $N$. We then have $R= N^{f+g+1} h_0 h_1 / Z_2$, where $N$ and $Z_2$ are mutually prime.

First, we further rearrange things. Notice that since $R$ is an integer, $Z_2$ must divide $h_0 h_1$.  We can write $h_0 = w_0 z_0$ and $h_1 = w_1 z_1$, $Z_2=z_0 z_1$. Then $z_0$ and $z_1$ are mutually prime with $N$. Therefore we may rewrite $B$ in the following way: $B=w_0 z_0 N^f \{0 = b_0, b_1 , \dots ,b_{N-1} \}$, where, since $z_0$ and $N$ are mutually prime, $z_0 \{0 = b_0, b_1 , \dots ,b_{N-1} \}$ is a complete set of residues modulo $N$. Thus we let $B=w_0  N^f \{0 = B_0, B_1 , \dots ,B_{N-1} \}$, $L=w_1 N^g \{0 = L_0, L_1 , \dots ,L_{N-1} \}$, where $\{B_k \}$ and $\{L_j \}$ are complete sets of residues modulo $N$, and $R=N^{f+g+1} w_0 w_1$.

Let $B' = T_0 \oplus T_1 \oplus T_2 \oplus T_3$ and $L' = U_0 \oplus U_1 \oplus U_2 \oplus U_3$, where
\beq
T_0 = \{ 0,1,2,\dots,w_0 -1 \} ; U_0 = \{ 0,1,2,\dots,w_1 -1 \}
\eeq
\beq
T_1 = \{ 0, w_0, 2w_0, \dots, (N^f -1) w_0 \} ; U_1 = \{ 0, w_1, 2w_1, \dots, (N^g -1) w_1 \}
\eeq
\beq
T_2 =  \{ 0, w_0 N^{f+1}  , \dots, (N^g - 1) w_0 N^{f+1}  \}; U_2 = \{ 0, w_1 N^{g+1}  , \dots, (N^f -1) w_1 N^{g+1}  \}
\eeq
\beq
T_3 =  \{ 0, w_0 N^{f+g+1}  , \dots, (w_1 - 1) w_0 N^{f+g+1}  \}; U_3 = \{ 0, w_1 N^{f+g+1}  , \dots, (w_0 -1) w_1 N^{f+g+1}  \}
\eeq

We shall show that $B' , L'$ is the desired complementary Hadamard pair.

First, we show that $B \oplus B' = \bz (\mod R)$, and likewise for $L$ and $L'$. Notice that $B \oplus T_1 = w_0 (\bz (\mod N^{f+1}))$. Then, $B \oplus T_0 \oplus T_1 = \bz (\mod N^{f+1} w_0)$. Then, $B \oplus T_0  \oplus T_1 \oplus T_2 = \bz (\mod N^{f+g+1} w_0)$. Lastly, $B \oplus T_0  \oplus T_1 \oplus T_2 \oplus T_3 = \bz (\mod N^{f+g+1} w_0 w_1)$, and we are done. Similar reasoning applies to $L'$.

Now we show that $B' , L'$ are a Hadamard pair with scaling factor $R$. By performing a few cancelations, we notice that $T_0 , U_3$ is a Hadamard pair with scaling factor $R$. Similarly, so is $T_1 , U_2$. In addition, notice that $t_1 u_3$ is a multiple of $R$ for every $t_1 \in T_1 , u_3 \in U_3 $. Thus, by Proposition \ref{prHP}, $T_0 \oplus T_1 , U_3 \oplus U_2 $ is a Hadamard pair with scaling factor $R$. Similarly, so is $T_2 \oplus T_3 , U_1 \oplus U_0$. Now notice that $tu$ is a multiple of $R$ for every $t \in T_2 \oplus T_3 , u \in U_3 \oplus U_2 $. Thus, by Proposition \ref{prHP}, $B',L'$ is a Hadamard pair with scaling factor $R$.

Now we show that $\text{gcd} (B\oplus B') = 1$. If $w_0 >1$, $1\in B'$. If not, if $f=0$, $N \in B'$ and $B$ contains an element of the form $Nk+1$ so $\gcd(B\oplus B')=1$; if $f>0$ then $1\in B'$, so we are done.

Now we show that the extreme cycles for $B',L'$ are contained in $\bz$. If $f>0$, $1\in B'$, so we are done (by Proposition \ref{pr1.2}). If not, $\text{gcd}(B')$ divides $w_0 N$, so the extreme cycle points are in $\bz / w_0 N$. Consider two such points, $x$ and $y$, where $x=\frac{y+l}{R}$ for some $l\in L'$. Upon multiplying by $R$, we notice that the left hand side is an integer, as is $l$, so $y$ is an integer, and we are done.

Thus, $B',L'$ is a Hadamard pair with scaling factor $R$.

\end{myproof}

Due to the above theorem, we have a complementary Hadamard pair for every Hadamard pair when $N=2$, $3$, and $5$, whenever such a thing is possible. We turn our attention to the case $N=4$.


\begin{myproof}[Proof of Theorem \ref{th0.5a}]
The cases of size 2,3,5 are covered by Theorem \ref{th0.4a} so we considered the case of size 4. As above, we have that $R=2^{C+M+a+1} d$, $B=2^C \{0, 2^a c_1, c_2, c_2 + 2^a c_3\}$, and $L=2^M \{0, n_1, n_1 + 2^a n_2, 2^a n_3\}$, where $c_i$ and $n_i$ are all odd, $a$ is a positive integer, $C$ and $M$ are non-negative integers, and $d$ divides $c_1 n$, $c_3 n$, $n_2 c$, and $n_3 c$, where $c$ is the greatest common divisor of the $c_k$'s and similarly for $n$.

We begin by constructing sets $B'$ and $L'$ such that $B\oplus B' = L\oplus L' = \bz (\mod R)$. Let $B' = T_0 \oplus T_1 \oplus T_2 \oplus T_3$ and $L' = U_0 \oplus U_1 \oplus U_2 \oplus U_3$, where
\beq
T_0 = 2^{C+1} \{0,1,2,\dots2^{a-1} -1 \} ;   U_0 = 2^{M+1} \{0,1,2,\dots2^{a-1} -1 \} ;
\eeq
\beq
T_1 =\{0,1,2,\dots2^{C} -1 \} ;   U_1 =  \{0,1,2,\dots2^{M} -1 \}
\eeq
\beq
T_2 = 2^{a+C+1} \{0,1,2,\dots2^{M} -1 \} ;   U_2 = 2^{a+M+1} \{0,1,2,\dots2^{C} -1 \}
\eeq
\beq
T_3 =   U_3  = 2^{a+M+C+1} \{0,1,2,\dots, d -1 \} .
\eeq
Notice that $\{0, 2^a c_1, c_2, c_2 + 2^a c_3\} \oplus \{0,2,4,\dots,2^a -2 \} = \bz(\mod2^{a+1})$. Then $$B\oplus T_0 = 2^C \{0, 2^a c_1, c_2, c_2 + 2^a c_3\} \oplus  2^C \{0,2,4,\dots,2^a -2 \} = 2^C \bz_{2^{a+1}}.$$ Thus $B \oplus T_0 \oplus T_1 = \bz (\mod 2^{a+C+1})$. Therefore, $B \oplus B' = \bz(\mod R)$. Similar logic applies to $L$ and $L'$.

Now we must show $B',L'$ are a Hadamard pair with scaling factor $R$. Consider the polynomial
\beq
B'(z) \equiv \sum_{b' \in B'} z^{b'}  .
\eeq
Since $B'$ is a direct sum of sets, we have
\beq
B'(z) = \sum_{t_0 \in T_0} z^{t_0}  \sum_{t_1 \in T_1} z^{t_1}  \sum_{t_2 \in T_2} z^{t_2}  \sum_{t_3 \in T_3} z^{t_3} .
\eeq
Now we let $p_n (z) = \sum_{k=0}^{n-1} z^{k}$. Then, rewriting the product that is $B'(z)$, we have
\beq
B'(z) = p_{2^{a-1}} (z^{2^{C+1}}) p_{2^{C}} (z) p_{2^{M}} (z^{2^{a+C+1}}) p_{d} (z^{2^{a+M+C+1}}) .
\eeq
Now let $l_1 ' \neq l_2 ' \in L'$. We would like to show that if $q = l_1 ' - l_2 '$ then $B'\left( \text{exp} \left( \frac{2 \pi i}{R}   q \right) \right)=0$. This in turn would imply that the matrix associated with $B',L'$ and scaling factor $R$ is unitary and thus, $B',L'$ is a Hadamard pair with scaling factor $R$.

Any difference $q$ of distinct elements in $L'$ can be written
\beq \label{425}
q= q_1 + 2^{M+1} q_2 + 2^{a+M+1} q_3 + 2^{a+M+C+1} q_4 ,
\eeq
where $q_1 \in \pm \{0,1,\dots,2^M -1 \}$, $q_2 \in \pm \{0,1,\dots,2^{a-1} -1 \}$, 
$q_3 \in \pm \{0,1,\dots,2^C -1 \}$, and $q_4 \in \pm \{0,1,\dots,d -1 \}$, and at least one $q_j$ is non-zero.

Notice that since $p_n (z) (z-1) = z^n - 1$, the zeroes of $p_n$ are exactly the $n$th roots of unity other than $1$. We shall use this to prove by cases that $B'\left( \text{exp} \left( \frac{2 \pi i}{R}   q \right) \right)=0$ for any $q \in L'$.

Now assume $q \neq 0$ modulo $d$. Notice $$p_d \left( \text{exp} \left( \frac{2 \pi i}{R}   q \right)^{2^{a+C+M+1}} \right) = p_d \left( \text{exp} \left( \frac{2 \pi i}{d}   q \right) \right)=0,$$ and thus $B'\left( \text{exp} \left( \frac{2 \pi i}{R}   q \right) \right)=0.$ Thus, we may assume $q= 0$ modulo $d$, and thus we let $q = q_0 d$.

Next assume $q \neq 0$ modulo $2^M$. Then since $d$ is odd, the same is true of $q_0$. Notice $$p_{2^M} \left( \text{exp} \left( \frac{2 \pi i}{R}   q_0 d \right)^{2^{a+C+1}} \right) = p_{2^M} \left( \text{exp} \left( \frac{2 \pi i}{2^M}   q_0 \right) \right)=0,$$ and thus $B'\left( \text{exp} \left( \frac{2 \pi i}{R}   q \right) \right)=0$. Thus, we may assume $q = 0$ modulo $2^M d$, so we let $q=q_a 2^M d$. Then, from \eqref{425}, we can see that $q_1 = 0$, and thus $2^{M+1} d$ divides $q$. Thus we let $q= q_b 2^{M+1} d$.

Next assume $q_b \neq 0$ modulo $2^{a-1}$. Then  $p_{2^{a-1}} \left( \text{exp} \left( \frac{2 \pi i}{R}   q_b 2^{M+1} d \right)^{2^{C+1}} \right) = p_{2^{a-1}} \left( \text{exp} \left( \frac{2 \pi i}{2^{a-1}}   q_b \right) \right)=0$, and thus $B'\left( \text{exp} \left( \frac{2 \pi i}{R}   q \right) \right)=0$. Thus we may assume $q = 0$ modulo $2^{M+a} d$, so examining \eqref{425}, we see that $q_2=0$. Thus $2^{M+a+1} d$ divides $q$, so we let $q= q_w 2^{M+a+1} d$.

Now assume $q_w \neq 0$ modulo $2^{C}$. Then  $p_{2^{C}} \left( \text{exp} \left( \frac{2 \pi i}{R}   q_w 2^{M+a+1} d \right)  \right) = p_{2^{C}} \left( \text{exp} \left( \frac{2 \pi i}{2^{C}}   q_w \right) \right)=0$, and thus $B'\left( \text{exp} \left( \frac{2 \pi i}{R}   q \right) \right)=0$.

Thus $q$ must be a multiple of $R$, otherwise $B'\left( \text{exp} \left( \frac{2 \pi i}{R}   q \right) \right)=0$. But a difference of distinct elements in $L'$ contains no such thing, so $B'\left( \text{exp} \left( \frac{2 \pi i}{R}   q \right) \right)=0$ and thus $B',L'$ are a Hadamard pair with scaling factor $R$. 

Next we must show that the greatest common divisor of elements in $B \oplus B'$ is one. If $C>0$, this is true because $1\in T_1$. If $C=0$ then $B$ contains  an odd number and since $\gcd(T_2)$ divides $2^{a+C+1}$ we get that $\gcd(B\oplus B')=1$.  

Lastly, we must show that the extreme cycles for $B',L'$ are contained in $\bz$. By construction, the greatest common divisor of $B'$ divides $R$. Therefore all the extreme cycle points must be in $\bz / R$. Consider two such points, $\frac{x}{R}$ and $\frac{y}{R}$, consecutive in the cycle. Then we have $\frac{x}{R} = \frac{l'+\frac{y}{R}}{R}$ for some $l' \in L'$. Multiplying both sides by $R$, we can see that the left hand side is an integer. Therefore, so is the right hand side, so $\frac{y}{R}$ must be an integer. But $y$ was arbitrary, so we are done.

\end{myproof}

\section{Examples}

In the following examples, we will frequently refer to the extreme cycles of $L\oplus L'$. It is to be understood that these cycles are extreme for $(B \oplus B',L\oplus L')$ with scaling factor $R$, where, since we are dealing with complementary Hadamard pairs, the greatest common divisor of $B \oplus B'$ is $1$. We also refer to the digits of a cycle as the cycle itself. 

\begin{example}\label{ex4.1}
Let $R=4$, $B=\{0,2\}$, $B'=\{0,1\}$. Then $\mu_B$ is the 4-Cantor measure defined in \cite{JoPe98} and $\mu_{B'}$ is a contraction by 2 of this measure. Let $L=\{0,3\}$ and $L'=\{0,2\}$. We check that $(B,L)$ and $(B',L')$ are complementary Hadamard pairs. 
It is easy to check that $(B,L)$ and $(B',L')$ are Hadamard pairs and $B\oplus B'$ and $L\oplus L'$ are complete sets of representatives $\mod 4$. 
By Proposition \ref{pr1.2}, the extreme cycles for $(B,L)$ are contained in $\frac12\bz\cap [0,1]$. We can check the points $\{0,1/2,1\}$ one by one and 
we see that the extreme cycles are $\{0\}$ with digits $\underline 0$ and $\{1\}$ with digits $\uln3$.

For $(B',L')$, the extreme cycles are contained in $\bz\cap[0,2/3]$. So we have only one extreme cycle $\{0\}$ with digits $\uln0$. 

Thus, the condition (ii) in Definition \ref{def2.1} is satisfied. Condition (iii) is also satisfied. So we have complementary Hadamard pairs. 

Since $B\oplus B'=\{0,1,2,3\}$, the attractor $X_{B\oplus B'}$ is the unit interval $[0,1]$ and $\mu_{B\oplus B'}$ is the Lebesgue measure on the unit interval. Therefore the convolution of the measures $\mu_B$ and $\mu_{B'}$ is the Lebegue measure on the unit interval. 

Next, we find the extreme cycles for $L\oplus L'=\{0,2,3,5\}$. These are contained in $\bz\cap[0,5/3]$. We have $\frac{1+3}{4}=1$. So the only extreme cycles are $\{0\}$ with digits $\uln0$ and $\{1\}$ with digits $\uln3$. Since $p(3)=3$ and $p'(3)=0$ and $\uln3$ is a cycle for $L$ and $\uln0$ is a cycle for $L'$, Theorem \ref{th2.3} (v) implies that the spectrum $\Lambda(L)$ for $\mu_B$ tiles $\bz$ by the spectrum $\Lambda(L')$ for $\mu_{B'}$.

Note that $\Lambda(L)$ contains negative numbers: for example $-1$ has the representation $\uln3$, $-4$ has the representation $0\uln3$.

Take now $L=\{0,1\}$ and $L'=\{0,6\}$. One can check as above that $(B,L)$ and $(B',L')$ are complementary Hadamard pairs. The extreme cycle for $(B,L)$ is $\{0\}$ with digits $\underline 0$ and the extreme cycles for $(B',L')$ are $\{0\}$ with digits $\underline 0$ and $\{2\}$ with digits $\underline 6$. 

The spectrum $\Lambda(L)$ for $\mu_B$ is the one described in \eqref{eqspmu4}. We have $L\oplus L'=\{0,1,6,7\}$. The extreme cycles for $(B\oplus B', L\oplus L')$ are $\{0\}$ with digits $\underline 0$ and $\{ 2\}$ with digits $\underline 6$. Since $p(\underline 6)=\underline 0$ which is an extreme cycle for $(B,L)$ and $p'(\underline 6)=\underline 6$ which is an extreme cycle for $(B',L')$, it follows that $\Lambda(L)$ tiles $\bz$ with $\Lambda(L')$.
\end{example}

\begin{example}\label{ex4.2}
Let $R=4$, $B=\{0,2\}$, $B'=\{0,1\}$,  $L=\{0,1\}$ , $L'=\{0,2\}$. Then it is easy to check that $(B,L)$ and $(B',L')$ are complementary Hadamard pairs. 
The only extreme cycle for $(B,L)$ and $(B',L')$ is $\{0\}$. The spectra $\Lambda(L)$ and $\Lambda(L')$ are contained in $\bn$. 
Since $L\oplus L'=\{0,1,2,3\}$ there is a non-trivial extreme cycle for $L\oplus L'$, $1=\frac{1+3}4$. Therefore we have that $\uln3$ is an extreme cycle for $L\oplus L'$. 
But $p(\uln3)=\uln1$ and $p'(\uln3)=\uln2$ and these are not extreme cycles for $L$ and $L'$ respectively.   
\end{example}

\begin{example}\label{ex4.3}
Let $R=6$, $B=\{0,1,2\}$, $B'=\{0,3\}$,  $L=\{0,2,10\}$ , $L'=\{0,1\}$. Then it is easy to check that $(B,L)$ and $(B',L')$ are complementary Hadamard pairs. 
The extreme cycles for $(B,L)$ are $\{0\}$ with digits $\uln0$ and $\{2\}$ with digits $\uln{(10)}$. $(B',L')$ has only the trivial cycle.

We consider $L\oplus L' = \{0,1,2,3,10,11\}$. The extreme cycles are are $\{0\}$ with digits $\uln0$ and $\{2\}$ with digits $\uln{(10)}$. It is clear that $p(\uln0)$ and $p'(\uln0)$ are cycles for $L$ and $L'$ respectively. Notice that $p(\uln{(10)})=\uln{(10)}$, which is a cycle for $L$, and $p'(\uln{(10)})=\uln0$, which is a cycle for $L'$. Therefore, by theorem \ref{th2.3}, $\Lambda(L)\oplus\Lambda(L')=\bz$.

If we replace $10$ in $L$ by $4$, we still have complementary Hadamard pairs, but the extreme cycles for $L\oplus L' = \{0,1,2,3,4,5\}$ are different, and now all the extreme cycles for $(B,L)$ and $(B',L')$ are trivial.
For $L\oplus L'$ we still have $\{0\}$ with digits $\uln0$, and now the other cycle is $\{1\}$ with digits $\uln5$. Since $p(\uln5)=\uln4$ is not a cycle for $L$ and $p'(\uln5)=\uln1$ is not a cycle for $L'$, we have by Theorem \ref{th2.3} that $\Lambda(L)\oplus\Lambda(L') \neq \bz$. 
\end{example}

\begin{example}
Let $R=8$, $B=\{0,3,4,7\}$, $B'=\{0,2\}$,  $L=\{0,3,4,7\}$ , $L'=\{0,2\}$. Then it is easy to check that $(B,L)$ and $(B',L')$ are complementary Hadamard pairs. The matrix associated with $(B,L)$ and scaling factor $R$ is interesting: it is
$$\begin{pmatrix}
  1&1&1&1\\
  1&-1&e^{\pi i /4}&-e^{\pi i /4}\\
  1&-1&-e^{\pi i /4}&e^{\pi i /4}\\
  1&1&-1&-1
\end{pmatrix}.
$$
The extreme cycles for $(B,L)$ are $\{0\}$ with digits $\uln0$ and $\{1\}$ with digits $\uln{7}$. $(B',L')$ has only the trivial cycle.
 
We consider $L\oplus L' = \{0,2,3,4,5,6,7,9\}$. The cycles are are $\{0\}$ with digits $\uln0$ and $\{1\}$ with digits $\uln{7}$. It is clear that $p(\uln0)$ and $p'(\uln0)$ are cycles for $L$ and $L'$ respectively. Notice that $p(\uln{7})=\uln{7}$, which is a cycle for $L$, and $p'(\uln{7})=\uln0$, which is a cycle for $L'$. Therefore, by Theorem \ref{th2.3}, $\Lambda(L)\oplus\Lambda(L')=\bz$.

If we replace $2$ in $L'$ by $14$, we still have complementary Hadamard pairs, but the extreme cycles for $L\oplus L' = \{0,3,4,7,14,17,18,21\}$ are different. The extreme cycles for $(B,L)$  are unchanged. The extreme cycles for $(B',L')$ are now $\{0\}$ with digits $\uln{0}$ and $\{2\}$ with digits $\uln{(14)}$.

For $L\oplus L'$ we still have $\{0\}$ with digits $\uln0$, and now we also have $\{1\}$ with digits $\uln7$, $\{2\}$ with digits $\uln{(14)}$, and $\{3\}$ with digits $\uln{(21)}$. We have $p(\uln7)=\uln7$, which is an extreme cycle for $(B,L)$, and $p'(\uln7)=\uln0$, which is an extreme cycle for $(B',L')$. We also have $p(\uln{14})=\uln0$, which is an extreme cycle for $(B,L)$, and $p'(\uln{14})=\uln{(14)}$, which is an extreme cycle for $(B',L')$. Finally, we have $p(\uln{21})=\uln7$, which is an extreme cycle for $(B,L)$, and $p'(\uln{21})=\uln{(14)}$, which is an extreme cycle for $(B',L')$. Therefore, by Theorem \ref{th2.3}, $\Lambda(L)\oplus\Lambda(L')=\bz$.

So $\Lambda(L)$ tiles with two very different tiling sets $\Lambda(\{0,2\})$ and $\Lambda(\{0,14\})$.

\end{example}

\begin{acknowledgements}
This work was partially supported by a grant from the Simons Foundation (\#228539 to Dorin Dutkay).
\end{acknowledgements}

\bibliographystyle{alpha}	
\bibliography{eframes}

\end{document}